\newtheorem{thm}{Theorem}[section]
\newtheorem*{theorem*}{Theorem}
\newtheorem{lem}[thm]{Lemma}
\newtheorem{prop}[thm]{Proposition}
\theoremstyle{definition}
\newtheorem{defn}{Definition}[section]
\newtheorem*{exmp}{Example}
\newtheorem{cor}{Corollary}[section]
\newtheorem*{remark}{Remark}
\newtheorem*{question}{Question}
\newcommand{\dotifempty} [1]{\ifthenelse{\isempty{#1}}
                          	{\cdot}%
                          	{#1}}
\DeclarePairedDelimiter\absolute{\lvert}{\rvert}%
\DeclarePairedDelimiter\nrm{\lVert}{\rVert}%
\DeclarePairedDelimiter\floor{\lfloor}{\rfloor}
\DeclarePairedDelimiter\braces{\{}{\}}
\newcommand{\sog}[1]{\left( #1 \right)}
\newcommand{\norm}[1]{\nrm*{\dotifempty{#1}}}                              
\newcommand{\supnorm}[1]{\norm{#1}_{\infty}}
\newcommand{\supnrm}[1]{\nrm{#1}_{\infty}}
\newcommand{\wedgenorm}[1]{\norm{#1}^{\wedge}}
\newcommand{\abs}[1]{\absolute{\dotifempty{#1}}}
\newcommand{\pabs}[1]{\absolute*{\dotifempty{#1}}_p}
\newcommand{\map}[0]{\rightarrow}
\newcommand{\xmap}[0]{\xrightarrow}
\newcommand{\gorer}[0]{\ \Longrightarrow \ }
\newcommand{\quot}[2]{\left.\raisebox{.3em}{$#1$}/\raisebox{-.3em}{$#2$}\right.}
\newcommand{\limit}[1]{\lim_{#1\map\infty}}
\renewcommand{\restriction}[2]{{
  \left.\kern-\nulldelimiterspace 
  #1 
  \vphantom{\big|} 
  \right|_{#2} 
  }}
\newcommand{\N}[0]{\fld{N}}
\newcommand{\Z}[0]{\fld{Z}}
\newcommand{\Q}[0]{\fld{Q}}
\newcommand{\R}[0]{\fld{R}}
\newcommand{\C}[0]{\fld{C}}
\newcommand{\twomat}[4]{\begin{pmatrix}#1 & #2 \\ #3 & #4 \end{pmatrix}}
\newcommand{\fld}[1]{
						\ifthenelse{\isempty{#1}}
                          	{\mathbb{Q}_p}
                          	{\mathbb{#1}}
                         } 
\renewcommand{\O}[1]{
						\ifthenelse{\isempty{#1}}
                          	{\fld{Z}_p}
                          	{\mathcal{O}_{\fld{#1}}}
                         } 
\newcommand{\oneOkMesh}[1]{
						\ifthenelse{\isempty{#1}}
                          	{\mathcal{O}_{\fld{K}}}%
                          	{T^{#1}\cdot\mathcal{O}_{\fld{K}}}
                         }
\newcommand{\oneOk}[2]{
						\ifthenelse{\isempty{#1}}
                          	{\textbf{1}_{\oneOkMesh{#2}}}%
                          	{\textbf{1}_{#1 +\oneOkMesh{#2}}}
                         }
\newcommand{\oneZpMesh}[1]{
						\ifthenelse{\isempty{#1}}
                          	{\fld{Z}_p}%
                          	{p^{#1}\cdot\fld{Z}_p}
                         }
\newcommand{\oneZp}[2]{
						\ifthenelse{\isempty{#1}}
                          	{\textbf{1}_{\oneZpMesh{#2}}}%
                          	{\textbf{1}_{#1 +\oneZpMesh{#2}}}
                         }
\newcommand\reallywidehat[1]{%
\savestack{\tmpbox}{\stretchto{%
  \scaleto{%
    \scalerel*[\widthof{\ensuremath{#1}}]{\kern.1pt\mathchar"0362\kern.1pt}%
    {\rule{0ex}{\textheight}}
  }{\textheight}%
}{2.4ex}}%
\stackon[-6.9pt]{#1}{\tmpbox}%
}
\newcommand{\qbinom}[3]{{#1 \brack #2}_{#3}}
\newcommand{\heis}[0]{\mathcal{H}}
\newcommand{\schw}[0]{\mathcal{S}}
\newcommand{\dotprod}[0]{\raisebox{-0.3ex}{\scalebox{1.5}{$\cdot$}}}
\newcommand{\eps}[0]{\epsilon}
\newcommand{\Norms}[0]{\mathcal{N}}
\newcommand{\invNorms}[0]{\mathcal{N}(\schw)^{\heis}}
\newcommand{\invHomothety}[0]{\mathcal{N}_H(\schw)^{\heis}}
\newcommand{\completion}[2]{{#1}_{#2}}
\newcommand{\Grassmannian}[0]{Gr}
\newcommand{\Ucomp}[1]{\widehat{#1}}
\newcommand{\dominated}[0]{\preceq}
\renewcommand{\footnoterule}{%
  \kern -3pt
  \hrule width 2in
  \kern 2pt
}
\def\blfootnote{\xdef\@thefnmark{}\@footnotetext}
\newcommand{\pcoeff}[2]{[#1,#2]} 
\title{On the rigidity of invariant norms on the $p$-adic Schrödinger representation}
\author{Amit Ophir}
\begin{document}
\maketitle
\begin{abstract}
Motivated by questions about $\C_p$-valued Fourier transform on the locally compact group $(\Q_p^d,+)$, we study invariant norms on the $p$-adic Schrödinger representation of the Heisenberg group.
Our main result is a minimality and rigidity property for norms in a family of invariant norms parameterized by a Grassmannian.
This family is the orbit of the sup norm under the action of the symplectic group, acting via intertwining operators.
We also prove general fundamental properties of quotients of the universal unitary completion of cyclic algebraic representations.
Combined with the rigidity property, we are able to show that the completion of the Schrödinger representation in any of the norms in that family satisfies a strong notion of irreducibility and a strong version of Schur's lemma.
Norms that can be formed as the maximum of a finite number of norms from that family are also studied.
We conclude this paper with a list of open questions.
\end{abstract}

\tableofcontents

\blfootnote{\textit{2000 Mathematics subject classification.} 46S10, 22D12, 05A30.}
\blfootnote{\textit{Key words and phrases.} $p$-adic analysis, Schrödinger representations, invariant norms, $q$-analogs.}

\section{Introduction}
Choose a non-trivial smooth character $\psi:(\Q_p,+)\map \C_p^\times$.
The $\C_p$-valued Haar distribution $dt$ on $\Q_p$ is not a measure.
As a result, we cannot define the integral $\intop f(t)\ dt$ for a general continuous function $f:\Q_p\map\C_p$, even if $f$ is compactly supported.
However, we can define integration for locally constant functions with compact support, and we denote by $\schw(\Q_p)$ the space of all such functions.

The Fourier transform of $f\in \schw(\Q_p)$ is defined by
\[\reallywidehat{f}(x)=\intop_{\Q_p}\psi(xt)f(t)\ dt.\]

The Fourier transform is not continuous in the sup norm.
In \cite{ophir2016q}, we showed that the Fourier transform is "as discontinuous as it can get" in the sense that the graph
\[\Gamma=\braces{(f,\reallywidehat{f})\ |\ f\in\schw(\Q_p)}\]
is dense in $C_0(\Q_p)\times C_0(\Q_p)$.
Here, $C_0(\Q_p)$ is the space of continuous functions that go to zero at infinity, the completion of $\schw(\Q_p)$ in the sup norm. 
The proof in \cite{ophir2016q} went by restricting the Fourier transform to some finite dimensional subspaces and used a special decomposition of the Fourier transform on these subspaces. 
It also used $q$-arithmetic, but in an entirely different way than the way $q$-arithmetic is used in the present paper.

To motivate the results in this paper, we describe two other approaches to the discontinuity of the Fourier transform.
By introducing the Heisenberg group and the Schrödinger representation, we can reformulate the above result in terms of invariant norms.

The Heisenberg group $\heis_3(\Q_p)$ is the group of unipotent matrices
\[\heis_3(\Q_p)=\braces*{
\begin{pmatrix}
1 & a & t\\
0 & 1 & b\\
0 & 0 & 1
\end{pmatrix}
}\subset GL_3(\Q_p).\]
We denote its elements by $[a,b,t]$.
The (smooth) Schrödinger representation $\rho_\psi:\heis_3(\Q_p)\map GL(\schw(\Q_p))$, attached to the character $\psi$ is defined by
\[\sog{\rho_\psi([a,b,t])f}(x)=\psi\sog{t+\frac{ab}{2}}\cdot \psi(bx)\cdot f(x+a).\]
The representation $\rho_\psi$ is irreducible and the Stone-von Neumann theorem says that, up to isomorphism, $\rho_\psi$ is the unique smooth irreducible representation of $\heis_3(\Q_p)$ with central character $\psi$.

The sup norm is invariant under the action of the Heisenberg group and so is the norm $\wedgenorm{f}:=\supnrm{\reallywidehat{f}}$.
We remark that these two norms are not equivalent.

Let $\Lambda$ and $\reallywidehat{\Lambda}$ be the closed unit balls of $\supnorm{}$ and $\wedgenorm{}$ respectively.
It is an easy exercise to prove that the following are equivalent.
    \begin{enumerate}
    \item The graph $\Gamma$ is dense in $C_0(\Q_p)\times C_0(\Q_p)$.
    \item $\Lambda+\reallywidehat{\Lambda}=\schw(\Q_p)$.
    \item There exists no $\heis_3(\Q_p)$-invariant norm that is smaller than both $\supnorm{}$ and $\wedgenorm{}$.
    \end{enumerate}

It turns out that $(3)$ is true because $\supnorm{}$ (and likewise $\wedgenorm{}$) is a minimal $\heis_3(\Q_p)$-invariant norm.
In addition, it has a surprising rigidity.
This is the content of Theorem \ref{thm_strong_minimality}, which in this case says the following.

\begin{theorem*}
Let $\norm{}$ be an $\heis_3(\Q_p)$-invariant norm on $\schw(\Q_p)$ that is dominated by the sup norm (i.e. $\norm{}\leq c\cdot \supnorm{}$ for some $c>0$).
Then there exists $r>0$ such that $\norm{}=r\cdot \supnorm{}$.
\end{theorem*}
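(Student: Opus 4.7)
The plan is to reduce the problem to showing that all characteristic functions $\oneZp{x_0}{N}$ have the same norm, and then to bootstrap from the basic functions to all of $\schw(\Q_p)$.

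By the translation invariance of the action $[a,0,0]$, the quantity $\norm{\oneZp{x_0}{N}}$ depends only on $N$; denote it by $c_N$. After rescaling $\norm{}$ we may assume $\norm{} \leq \supnorm{}$, giving $c_N \leq 1$. The ultrametric applied to the refinement $\oneZp{}{N} = \sum_{i=0}^{p-1}\oneZp{ip^N}{N+1}$ yields $c_N \leq c_{N+1}$, so $(c_N)_{N \in \Z}$ is non-decreasing. In the opposite direction, the character twist $\psi(b_j x)\oneZp{}{N}$ with $v_p(b_j) = -N-1$ decomposes as $\sum_i \omega^{ij}\oneZp{ip^N}{N+1}$ for a primitive $p$-th root of unity $\omega$, and inverting this discrete Fourier transform expresses each $\oneZp{ip^N}{N+1}$ as $\tfrac{1}{p}\sum_j \omega^{-ij}\psi(b_j x)\oneZp{}{N}$, where each summand has norm $c_N$ by Heisenberg invariance; the ultrametric then gives $c_{N+1} \leq p\cdot c_N$. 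Altogether one obtains only the loose two-sided bound $c_N \leq c_{N+1} \leq p\cdot c_N$.

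The central step is to upgrade this to the equality $c_N = c_{N+1}$. My plan is to extend $\norm{}$ by continuity to a seminorm on the sup-norm completion $C_0(\Q_p)$. A direct averaging argument shows that the $\heis_3(\Q_p)$-action on $C_0(\Q_p)$ is topologically irreducible: for any nonzero $f \in C_0(\Q_p)$ and any $x_0$ with $f(x_0) \neq 0$, the character-sum average $\tfrac{1}{p^k}\sum_{b \in p^{-k}\Z_p/\Z_p}\psi(-b x_0)\psi(b x) f$ equals $f \cdot \oneZp{x_0}{k}$, which for $k$ sufficiently large sup-approximates $f(x_0)\oneZp{x_0}{k}$ by continuity of $f$ at $x_0$; normalizing yields $\oneZp{x_0}{k}$ as a sup-norm limit of finite $\C_p$-linear combinations of $\heis_3(\Q_p)$-translates of $f$, and these characteristic functions span a dense subspace of $C_0(\Q_p)$. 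Hence the extended seminorm has trivial kernel and is a genuine norm on $C_0(\Q_p)$. At this point I would invoke the general rigidity theory for quotients of the universal unitary completion of cyclic algebraic representations (developed in the body of the paper) to conclude $\norm{} = r\supnorm{}$ on $C_0(\Q_p)$, hence on $\schw(\Q_p)$.

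The main obstacle is precisely the step of improving $c_N \leq c_{N+1} \leq p\cdot c_N$ to equality. Elementary ultrametric manipulations inside $\schw(\Q_p)$ do not seem able to rule out strict growth of $(c_N)$. The essential extra input is the topological irreducibility of the sup-norm completion as an $\heis_3(\Q_p)$-representation, combined with the general framework for cyclic algebraic representations developed in the paper; together these force the proportionality and supply the lower bound $\norm{} \geq r\supnorm{}$ that ultrametric analysis alone cannot yield.
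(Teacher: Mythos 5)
There is a genuine gap at the step you signal as the ``central step.'' You correctly identify the elementary ultrametric bounds $c_N \leq c_{N+1} \leq p\cdot c_N$, and you correctly observe that closing the gap to $c_N = c_{N+1}$ is exactly what is needed. But the proposed bridge — topological irreducibility of $C_0(\Q_p)$ plus ``the general rigidity theory for quotients of the universal unitary completion'' — does not close it. Topological irreducibility only tells you that the extension of $\norm{}$ to $C_0(\Q_p)$ has trivial kernel, i.e.\ is an honest norm; it does not tell you the extension is proportional to $\supnorm{}$. To feed the paper's general theorems (Theorem \ref{thm_strongly_cyclic_spaces}, Theorem \ref{Thm_strong_irreducibility}) you must know that $C_0(\Q_p)$ is \emph{strongly} irreducible, or equivalently that $\textbf{1}_{\Z_p}$ is a strongly cyclic vector, or equivalently that $\supnorm{}$ is locally maximal at $\textbf{1}_{\Z_p}$ — and this is precisely the hard content, not a free consequence. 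Indeed the paper explicitly constructs, at the end of Section 3, a topologically irreducible Banach representation ($C(\Z_p)$ under translations and multiplications by powers of a single $\zeta^x$ with $\zeta$ not a root of unity) which is \emph{not} strongly irreducible; that example shows that topological irreducibility gives no control on the set of dominated invariant norms.

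The averaging argument you give for topological irreducibility also reveals why it cannot be upgraded to strong cyclicity: the projector $\tfrac{1}{p^k}\sum_{b}\psi(-bx_0)\psi(bx)\,f$ has coefficients of $p$-adic absolute value $p^k$, which are unbounded as $k\to\infty$, whereas strong cyclicity of $\textbf{1}_{\Z_p}$ requires expressing arbitrary elements of $C_0(\Q_p)$ as $\sum_g \lambda_g\, g(\textbf{1}_{\Z_p})$ with a \emph{summable} (in particular bounded) family $(\lambda_g)$. The paper's proof supplies exactly this missing piece via the growth modulus and the $q$-Mahler expansion $\zeta^x = \sum_k [\zeta,q]_k\tqbinom{x}{k}{q}$, using the $p$-adic valuation estimate for $(\zeta;\zeta)_n$ (Proposition \ref{q_analog_evaluations_cor}) to force the dominance of the $k=1$ term and so obtain the contradiction $\norm{\zeta^x} > 1$. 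That $q$-arithmetic input is where the theorem actually lives; it cannot be replaced by the abstract Banach-representation machinery, which only converts local maximality into rigidity but does not prove local maximality itself. As a minor point, the direction you call a ``lower bound'' is really the upper bound $\norm{}\leq\supnorm{}$ (after normalizing at $\textbf{1}_{\Z_p}$) — this is what local maximality gives — while the reverse inequality comes from the easy weak-minimality argument (Proposition \ref{prop_weak_minimality}, reduction mod the maximal ideal and cyclic $p$-group representations); but this is a framing issue, not the gap.
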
 
Clearly, $(3)$  follows.

Yet another way to approach the question of the density of the graph $\Gamma$ is to consider the intersection
\[W=\overline{\Gamma}\cap\sog{C_0(\Q_p)\times\braces{0}}.\]
Viewing $W$ as a subspace of $C_0(\Q_p)$, it is a closed subspace and is invariant by the action of the Heisenberg group.
In \cite{demathan}, Fresnel and de Mathan constructed a non-zero element in $W$.
Thus, showing that $C_0(\Q_p)$ is topologically irreducible as a representation of $\heis_3(\Q_p)$ gives another proof of the density of $\Gamma$.
In this paper we prove that $C_0(\Q_p)$ is topologically irreducible.
In fact, we will show (Proposition \ref{topologically_irreducible}) that a stronger notion of irreducibility  holds for $C_0(\Q_p)$ (see Definitions \ref{def_strong_irreducible} and \ref{def_srongly_cyclic_vector}).

The results of this paper are more general than the above discussion in two ways.
First, we work with the group $(\Q_p^d,+)$, where $d\geq 1$ is an integer, and correspondingly, with higher dimensional Heisenberg groups.
Second, we consider all the intertwining operators on the Schrödinger representation, among which the Fourier transform is just a single example.
This allows us to study simultaneous continuity of any finite number of intertwining operators (see section $7$).

The methods of the proofs are of two types.
There are general results on Banach representations and $p$-adic functional analysis.
These are contained in section $3$.
The other type is $q$-arithmetic.
More precisely, we use $q$-Mahler bases in $C(\Z_p)$ and $p$-adic evaluations of some $q$-analog expressions in order to study norms on $C(\Z_p)$.

By using the results of section $3$, it can be shown that the local maximality (Definition \ref{def_local_maximality}) of the sup norm on $C(\Z_p)$ with respect to multiplication by smooth characters is equivalent to Theorem $2$ in \cite{demathan}.
In Section \ref{subsection_new_proof_Fresnel_de_Mathan} we use our methods to give a new proof of the main results in \cite{demathan}.
Our proof, using $q$-arithmetic, can be generalized to include the case where $\psi:(\Q_p,+)\map \C_p^\times$ is continuous but not smooth, and this case does not follow from \cite{demathan}.
These results will appear in a forthcoming paper.

We remark that the completions that we study of the Schrödinger representation are large in the sense that the reduction of their unit ball modulo the maximal ideal of $\mathcal{O}_{\C_p}$ is a non-admissible smooth representation over $\overline{\fld{F}}_p$.
\subsection*{Acknowledgement}
The author is grateful to Ehud de Shalit for many helpful discussions and for reading and commenting on the first draft of this paper.

    \subsection*{List of notation}
$p$ is a prime number and we fix an algebraic closure $\Q_p^{al}$ of $\Q_p$.
The absolute value $\abs{}_p$ on $\Q_p$ extends uniquely to $\Q_p^{al}$ and we denote by $\C_p$ the completion of $\Q_p^{al}$ with respect to $\abs{}_p$.
The field $\C_p$ is a complete non-archimedean normed field and algebraically closed.
We denote by $\mathcal{O}_{\C_p}$ the set of elements $a\in\C_p$ with $\pabs{a}\leq 1$.
    \begin{itemize}
    \item[]$d$ - A fixed integer, $d\geq 1$.
    \item[]$\heis=\heis_{2d+1}(\Q_p)$ - The $2d+1$-dimensional Heisenberg group over $\Q_p$.
    \item[]$Sp_{2d}(\Q_p)$ - The $2d$-dimensional symplectic group.
    \item[]$\Grassmannian$ - The quotient space $P\backslash SL_{2d}(\Q_p)$ of right cosets of the Siegel parabolic $P$. 
    It can be realised as the Grassmannian of maximal isotropic subspaces of a $2d$-dimensional symplectic space.
    \item[]$\schw(X)$ - The space of locally constant and compactly supported functions on a totally disconnected topological space $X$.
    \item[]$\schw=\schw(\Q_p^d)$ - The space of locally constant and compactly supported functions on $\Q_p^d$.
    \item[]$\psi$ - a non-trivial smooth character $\psi:(\Q_p,+)\map \C_p^\times$.
    \item[]$\rho_\psi$ - the Schrödinger representation of $\heis$ on $\schw$ with central character $\psi$.
    \end{itemize}
Assume that $V$ is a representation of a group $G$ over $\C_p$.
    \begin{itemize}
    \item[]$\Norms(V)^G$ - The set of norms on $V$ which are invariant under the action of $G$.
    \item[]$\Norms(V)_H^G$ - The set of homothety classes of $G$-invariant norms on $V$.
    \end{itemize}

\section{A reminder on $p$-adic Heisenberg groups and Schrödinger representations}
In this section we recall the classical theory of smooth irreducible representations of Heisenberg groups over $\Q_p$.
This section is based on \cite{kudla_notes,van1978smooth}.
Throughout this section, the following are fixed: $p$ is a prime number and $\Q_p$ is the field of $p$-adic numbers, $d\geq 1$ is an integer and $C$ is an algebraically closed field of characteristic zero.

\subsection{The Heisenberg group over $\mathbb{Q}_p$ and its smooth representations}
Let $W=\Q_p^d\oplus \Q_p^d$ and denote by $\omega$ the symplectic form on $W$ given by $\omega((x_1,y_1),(x_2,y_2))=x_1\dotprod y_2-y_1\dotprod x_2$, where $a\dotprod b$, for $a,b\in\Q_p^d$, is the standard scalar product.

We denote by $\heis=\heis_{2d+1}(\Q_p)$ the $2d+1$-dimensional Heisenberg group.
Its underlying set is $W\times \Q_p$ and the multiplication is given by 
\[[w_1,t_1]\cdot [w_2,t_2]=[w_1+w_2,t_1+t_2+\frac{1}{2}\omega(w_1,w_2)].\]
One easily verifies that the center of $\heis$, which is also its commutator subgroup, is $Z:=\braces{[0,t]\ |\ t\in \Q_p}$, and that $\heis/Z\simeq W=\Q_p^{2d}$.
In particular, $\heis$ is a two step nilpotent group.

As a topological group, $\heis$ inherits a topology from the topology of $\Q_p$.
This makes $\heis$ a totally disconnected (t.d.) and locally compact topological group.

Recall that a representation $(V,\pi)$ of a t.d. group $G$ over $C$ is said to be smooth if the stabilizer $Stab_G(v)$ in $G$ of any vector $v\in V$ is open.
A smooth representation of $G$ is called admissible if for any open compact subgroup $K\subset G$ the sub-space $V^K$ of vectors fixed by $K$ is finite dimensional.

By Schur's lemma, if $(V,\pi)$ is a smooth irreducible representation of $\heis$, the center of $\heis$ acts on $V$ via a character $\psi$, called \textit{the central character} of $\rho$.
We identity the center of $\heis$ with $\Q_p$ and view $\psi$ as a character $\psi:(\Q_p,+)\map C^\times$.
Since $\pi$ is smooth, the kernel of $\psi$ is an open subgroup of $\Q_p$ and we say that $\psi$ is a smooth character.

The classification of smooth irreducible representations of $\heis$ is well known, and we recall it.
If $\psi$ is trivial, the action of $\heis$ factors through an abelian quotient, and $V$ is $1$-dimensional.
Assume that $\psi$ is non-trivial.
We construct a representation $\rho_\psi$, called the Schrödinger representation of $\heis$, which has central character $\psi$.

Let $\schw=\schw(\Q_p^d)$ be the space of Schwartz functions, that is functions $f:\Q_p^d\map C$ which are locally constant and compactly supported.
It is an infinite dimensional vector space over $C$.
Define a representation $\rho_\psi$ of $\heis$ on $\schw$ as follows.
Let $w=(a,b)$ with $a,b\in\Q_p^d$.
Then
\[(\rho_{\psi}([w,t])f)(x)=\psi\sog{t+\frac{1}{2}a\dotprod b+b\dotprod x}\cdot f(x+a).\]

\begin{thm}[Smooth Stone-von Neumann]
Let $\psi$ be a non-trivial smooth character of $(\Q_p,+)$.
    \begin{enumerate}
    \item The representation $\rho_\psi$ is a smooth, irreducible and admissible representation of $\heis$ and has central character $\psi$.
    \item Let $(V,\pi)$ be a smooth representation of $\heis$.
    Assume that the center of $\heis$ acts via the character $\psi$.
    Then $V$ decomposes as a direct sum of sub-representations, each isomorphic to $\rho_\psi$.
    \end{enumerate}
\end{thm}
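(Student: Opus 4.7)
My plan is to prove Part (1) by direct analysis of the defining formula, and Part (2) by constructing an embedding $\rho_\psi \hookrightarrow V$ for each smooth $V$ with central character $\psi$, using a specific compact open subgroup of $\heis$ adapted to $\psi$. As a preliminary, fix the maximal abelian subgroup $A=\{[(0,b),t]:b\in\Q_p^d,\ t\in\Q_p\}$; it is abelian since $\omega((0,b_1),(0,b_2))=0$, and maximal because any $[(a,b),t]$ commuting with $A$ forces $a\dotprod b'=0$ for all $b'$, hence $a=0$.

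For Part (1): the central character equals $\psi$ by inspection. For smoothness, given $f\in\schw$, pick integers $N,M$ so that $f$ is $p^N\Z_p^d$-translation invariant, supported in $p^{-M}\Z_p^d$, and $\psi$ is trivial on $p^M\Z_p$; a small enough open compact subgroup of $\heis$ (chosen so that the $\tfrac12\omega$-cocycle stays in $\ker\psi$) then fixes $f$. For admissibility, with a standard open compact $K\subset\heis$, invariance under the modulations in $K$ confines elements of $\schw^K$ to a bounded support (using non-triviality of $\psi$), while translation-invariance forces them to be locally constant on a fixed mesh, yielding a finite dimensional space. For irreducibility, let $0\neq U\subseteq\schw$ be invariant and $f\in U$ nonzero; writing $f$ as a finite linear combination of $\textbf{1}_{x_i+p^N\Z_p^d}$ inside a bounded box, the modulations $[(0,b),0]$ act on each term by the character $b\mapsto\psi(b\dotprod x_i)$, and a Fourier-inversion-style averaging on the finite dual of $p^{-M}\Z_p^d/p^N\Z_p^d$ isolates each indicator inside $U$. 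Translations and varying $N,M$ then yield $U=\schw$.

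For Part (2): pick a $\Z_p$-lattice $L\subset\Q_p^d$, let $L^*=\{b\in\Q_p^d:b\dotprod L\subseteq\ker\psi\}$ be its $\psi$-dual, and form the open compact subgroup $K_L=\{[(a,b),t]:a\in L,\ b\in L^*,\ t\in\ker\psi\}\subset\heis$; a cocycle computation verifies $K_L$ is a subgroup when $p$ is odd. Then $K_L\cap Z=\ker\psi$ acts trivially on any smooth representation with central character $\psi$, and a direct computation shows $\rho_\psi^{K_L}=C\cdot\textbf{1}_L$. Given a smooth $\heis$-representation $V$ with central character $\psi$, I produce a nonzero $v\in V^{K_L}$ by starting from any nonzero smooth $v_0\in V$ fixed by an open compact $K_0\subseteq K_L$ and averaging over $K_L/K_0$, possibly translating $v_0$ first so that the average does not vanish. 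Then the map $\phi:\rho_\psi\to V$ defined by $\phi(g\cdot\textbf{1}_L)=g\cdot v$ is well-defined because $\mathrm{Stab}_{\heis}(\textbf{1}_L)=K_L\subseteq\mathrm{Stab}_{\heis}(v)$; cyclicity of $\textbf{1}_L$ in $\rho_\psi$ (from Part (1)) extends $\phi$ to all of $\rho_\psi$, and irreducibility forces $\phi$ to be injective. A Zorn's lemma argument on the $\rho_\psi$-isotypic subrepresentations of $V$ then forces $V$ to be isotypic: any nonzero quotient would still carry central character $\psi$ and admit another copy of $\rho_\psi$ by the same construction, contradicting maximality.

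The main obstacle will be the production of a nonzero $v\in V^{K_L}$. A naive attempt via Frobenius reciprocity from the non-open subgroup $A$ does not directly produce the desired vector (one checks that $\rho_\psi$ itself has no $A$-eigenvectors of eigenvalue extending $\psi$), so one must argue by explicit projection and averaging against $K_L$, exploiting the non-triviality of $\psi$ to guarantee the average does not vanish on a suitable translate. A related technical point is residue characteristic $2$, where the factor $\tfrac12$ in the Heisenberg cocycle forces a modified definition of $K_L$.
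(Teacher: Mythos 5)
The paper does not actually prove this theorem. It is cited as classical, with references to Kudla's notes on the local theta correspondence and van Dijk's paper on smooth representations of $p$-adic unipotent groups, so there is no ``paper's own proof'' to compare against. I will therefore assess your proposal on its own merits.

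Your outline follows the standard lattice-model proof, and Part (1) is fine as a sketch: smoothness, admissibility via joint translation/modulation invariance, and irreducibility via finite Fourier inversion are all correct routes. The genuine gap is in Part (2), in precisely the step you flag as the ``main obstacle'': producing a nonzero vector in $V^{K_L}$. Your proposed remedy --- ``averaging over $K_L/K_0$, possibly translating $v_0$ first so that the average does not vanish'' --- is not an argument. A priori, the averaging projector $P:V\to V^{K_L}$ could annihilate $g\cdot v_0$ for every $g\in\heis$; indeed if $V^{K_L}=0$ then it must, since the $g\cdot v_0$ span $V$ if $v_0$ is a generator. So you have reduced the problem to ``$V^{K_L}\neq 0$'' without supplying the reason.

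The actual mechanism is more specific and uses the Heisenberg commutation relations to move between eigenspaces. Since $K_L\cap Z=\ker\psi$ acts trivially, $K_L$ acts on $V$ through the compact abelian quotient $\overline{K_L}\cong L\times L^*$ (abelian because all commutators land in $\ker\psi\cap Z$), and smoothness of $V$ gives a decomposition $V=\bigoplus_\chi V_\chi$ over characters of $L\times L^*$. Pick a nonzero $w$ in some $V_\chi$. Characters of $L$ are of the form $a\mapsto\psi(a\dotprod c_0)$ with $c_0$ modulo $L^*$, and characters of $L^*$ are $b\mapsto\psi(b\dotprod a_0)$ with $a_0$ modulo $L$; the Heisenberg relations show that the action of $[(a,0),0]$ (resp.\ $[(0,c),0]$) shifts the $L^*$-component (resp.\ the $L$-component) of $\chi$ by exactly such a character while leaving the other component fixed. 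Two such shifts therefore carry $w$ into $V_{\mathrm{triv}}=V^{K_L}$, which is then nonzero. This is the one place in the proof that requires a real computation and you should write it out; as stated, the assertion that some translate has nonvanishing average is unjustified and effectively assumes the conclusion. The residue characteristic $2$ caveat you raise is correct and should also be settled explicitly, since with the $\tfrac{1}{2}\omega$-cocycle the set $K_L$ need not be a subgroup; one either modifies the cocycle or enlarges $\ker\psi$ suitably.

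Finally, a small point in the Zorn argument: ``maximal $\rho_\psi$-isotypic subrepresentation has zero quotient'' needs that the quotient is again smooth with central character $\psi$ (clear) and that the embedding $\rho_\psi\hookrightarrow V/V_{\mathrm{iso}}$ lifts to a new summand in $V$; the lift exists because $\rho_\psi$ is projective in the category of smooth representations with central character $\psi$ (equivalently, because $V\to V/V_{\mathrm{iso}}$ splits over $\rho_\psi$ by Schur and the one-dimensionality of $\rho_\psi^{K_L}$). That is worth a sentence so the Zorn step closes.
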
 

It is important to note that in the Schrödinger representation, the Heisenberg group acts on $\schw$ by translations and by multiplication by the smooth characters of $\Q_p^d$.
A smooth character of $\Q_p^d$ is a homomorphism $\alpha:(\Q_p^d,+)\map C^\times$ with an open kernel.
By definition, any translation appears as an action of an element of the Heisenberg group. 
It is also true that if $\alpha:\Q_p^d\map C^{\times}$ is a smooth character, there exists an element $[0,b,0]$ whose action is multiplication by $\alpha$.
Indeed, if $\psi$ is a non-trivial smooth character of $\Q_p$, any smooth character of $\Q_p^d$ is of the form $\psi\circ \lambda$, for some $\lambda$ in the dual space of $\Q_p^d$.

\subsection{Automorphisms of the Heisenberg group and intertwining operators on the Schrödinger representation}
Let $J=\begin{pmatrix}0 & I_d\\ -I_d & 0 \end{pmatrix}$ and $\mathrm{Sp}_{2d}(\Q_p)$ be the symplectic group
\[\mathrm{Sp}_{2d}(\Q_p)=\braces{g\in \mathrm{GL}_{2d}(\Q_p)\ |\ gJg^t=J}.\]
Thinking about the vectors in $\Q_p^d\oplus\Q_p^d$ as row vectors, an element $g\in \mathrm{Sp}_{2d}(\Q_p)$ acts on $W$ by right multiplication: $w\mapsto wg$ and preserves the symplectic form $\omega$.
This defines a right action of the symplectic group on the Heisenberg group by automorphisms as follows:
\[[w,t]\cdot g=[wg,t].\]
These automorphisms are continuous and their restriction to the center $Z=\braces{[0,t]\ |\ t\in\Q_p}$ is the identity.
Moreover, any continuous automorphism of $\heis$ whose restriction to the center is the identity is a composition of a conjugation and an automorphism coming from the symplectic group.
    \begin{remark}
    This is not true for Heisenberg groups over extension fields of $\Q_p$, and this is the only reason why we restrict to $\Q_p$.
    \end{remark} 

Let $g\in \mathrm{Sp}_{2d}(\Q_p)$.
Define a new representation $\rho_{g,\psi}$ on $\schw$ by 
\[\rho_{g,\psi}([w,t])f=\rho_\psi([w,t]\cdot g)f\]
for any $[w,t]\in\heis$ and $f\in\schw$.
The representation $\rho_{g,\psi}$ is smooth, irreducible and has $\psi$ as its central character.
Thus, by the Stone-von Neumann theorem, $\rho_\psi\simeq \rho_{g,\psi}$, so there exists an invertible linear operator $T_g$ on $\schw$ such that 
\[ \rho_\psi([w,t])\circ T_g=T_g\circ\rho_{g,\psi}([w,t])\]
for any $[w,t]\in \heis$.
By Schur's lemma, $T_g$ is unique up to a multiplicative constant.
It also follows that $T_{g_1\cdot g_2}$ is equal, up to a constant, to $T_{g_1}\circ T_{g_2}$, so $g\mapsto T_g$ is a projective representation, called the Weil representation.
There is an explicit formula for the operators $T_g$.
Let $g\in \mathrm{Sp}_{2d}(\Q_p)$ and write it as 
\[g=\twomat{a}{b}{c}{d}\]
where $a,b,c,d\in M_d(\Q_p)$ are square matrices.
\begin{prop}[\cite{kudla_notes}, Proposition 2.3]\label{prop_intertwining_formula}
Let $g$ as above and $T_g$ an intertwining operator corresponding to $g$.
There is a unique choice of a $\C_p$-valued Haar distribution $d\mu$ on $Im(c)$ such that 
\[T_g(f)(x)=\intop_{Im(c)}\psi\sog{\frac{1}{2}(xa)\dotprod (xb)-(xb) \dotprod y+\frac{1}{2}y\dotprod (yd)}\cdot f(xa+y)\ d\mu(y).\]
Here, $Im(c)$ is the space $\braces{vc\ |\ v\in\Q_p^d}$.
\end{prop}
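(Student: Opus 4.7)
The plan is to verify directly that the integral formula defines a non-zero intertwining operator between $\rho_{g,\psi}$ and $\rho_\psi$, and then to conclude the existence and uniqueness of the Haar distribution by Schur's lemma. For any Haar distribution $d\mu$ on $\mathrm{Im}(c)$, denote by $S_g^{d\mu}$ the operator defined by the right-hand side of the claimed identity. The integrand is locally constant in $y$ (since $\psi$ is smooth and $f$ is locally constant) and compactly supported in $\mathrm{Im}(c)$ (since $y \mapsto f(xa+y)$ is compactly supported there), so $S_g^{d\mu}(f)$ is well-defined, and it lies in $\schw$ by similar local-constancy and compactness considerations in $x$; non-vanishing is easy to see by testing on the indicator of a small enough ball.

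The main technical step is the intertwining identity
\[\rho_\psi([w,t]) \circ S_g^{d\mu} = S_g^{d\mu} \circ \rho_{g,\psi}([w,t])\]
for every $[w,t] = [(u,v),t] \in \heis$. Expanding both sides and substituting $y \mapsto y - vc$ in the integral appearing on one side (which preserves $d\mu$, since $vc \in \mathrm{Im}(c)$ and $d\mu$ is Haar), the $f$-factors in both expressions take the common form $f((x+u)a + y)$. Matching the remaining phases reduces to an algebraic identity in $(u,v,x,y)$ that follows from the symplectic relations $ab^t = ba^t$, $cd^t = dc^t$, and $ad^t - bc^t = I$ encoded in $gJg^t = J$. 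Alternatively, one can decompose $g$ as a product of the standard generators of $\mathrm{Sp}_{2d}(\Q_p)$ — Siegel parabolic elements $\twomat{A}{0}{0}{(A^t)^{-1}}$ and $\twomat{I}{B}{0}{I}$ together with the Weyl element $J = \twomat{0}{I}{-I}{0}$ — for which the formula collapses to a linear change of variables, multiplication by a quadratic character, or the Fourier transform respectively, verify the intertwining in each elementary case, and then combine.

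The conclusion is immediate from Schur's lemma applied to the irreducible smooth representation $\rho_\psi$: the space of intertwiners from $\rho_{g,\psi}$ to $\rho_\psi$ is one-dimensional, so $S_g^{d\mu} = \lambda(d\mu) \cdot T_g$ for a scalar $\lambda(d\mu) \in \C_p^\times$ that depends linearly on $d\mu$. Existence is obtained by rescaling $d\mu$ to achieve $\lambda(d\mu) = 1$, and uniqueness follows because $d\mu \mapsto S_g^{d\mu}$ is linear and injective on the one-dimensional space of Haar distributions on $\mathrm{Im}(c)$. The main obstacle is the phase-matching step of the previous paragraph: the algebra is straightforward but requires careful bookkeeping of several quadratic terms in $(u,v,x,y)$ together with simultaneous use of the three symplectic relations on the blocks of $g$.
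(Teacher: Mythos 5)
The paper does not prove this proposition; it cites it directly from Kudla's notes, so there is no in-paper argument to compare yours against. Your overall strategy (check that the integral formula defines a nonzero intertwiner, then invoke Schur's lemma together with linearity and injectivity of $d\mu \mapsto S_g^{d\mu}$ to pin down the scaling) is the standard and correct one. That said, there is a genuine gap in the main technical step.

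The intertwining identity you set out to verify, namely $\rho_\psi([w,t]) \circ S_g^{d\mu} = S_g^{d\mu} \circ \rho_{g,\psi}([w,t])$, does not hold for the displayed formula, and no amount of phase bookkeeping with the symplectic relations will make it hold. Take $d=1$, $g=J$, so that $a=0$, $b=1$, $c=-1$, $d=0$ and $S_J f(x) = \int \psi(-xy)\,f(y)\,d\mu(y)$ is the Fourier transform. With $w=(u,0)$, $t=0$, the left side is $\int \psi(-(x+u)y)\,f(y)\,d\mu(y)$ while the right side is $\int \psi((u-x)y)\,f(y)\,d\mu(y)$, and these differ. The relation that does hold for the displayed formula is the opposite one, $S_g^{d\mu} \circ \rho_\psi([w,t]) = \rho_{g,\psi}([w,t]) \circ S_g^{d\mu}$; in the same example this reads $S_J(f(\cdot+u))(x) = \psi(ux)\,S_J f(x)$, which is correct. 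You are, to be fair, following the paper's own display in Section 2, which states the intertwining relation in the wrong direction; but the paper's subsequent use of $T_g([w,t]f)=[wg,t]T_g(f)$ in the proof of Proposition \ref{prop_norms_Grassmannian} shows what the intended, consistent convention is. If you flip the direction of the identity to be verified, your substitution $y \mapsto y - vc$ still aligns the $f$-arguments to $f((x+u)a+y)$, and the phase-matching then does reduce to the symplectic block relations $ab^t=ba^t$, $cd^t=dc^t$, $ad^t-bc^t=I$.

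One smaller point: the assertion that $S_g^{d\mu}(f)$ lies in $\schw$ ``by similar local-constancy and compactness considerations in $x$'' is not quite right as stated. For fixed $x$ the integrand is indeed compactly supported in $y$, but the set of $x$ for which $xa + \mathrm{Im}(c)$ meets $\mathrm{supp}(f)$ is a union of translates of a subspace and is typically not compact. Compact support in $x$ comes instead from the oscillatory factor $\psi(-(xb)\cdot y)$: once $xb$ is large enough, the character $y\mapsto\psi(-(xb)\cdot y)$ oscillates faster than the scale of local constancy of the rest of the integrand and the integral vanishes. This is harmless but worth stating correctly, since without it the claim $S_g^{d\mu}(\schw)\subset\schw$ (needed before invoking Schur's lemma) is unjustified.
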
 
Note that if $g=J$, the formula gives, up to normalization, the usual Fourier transform. 
If $c=0$, we get the operation of multiplication by a quadratic exponential accompanied by the dilation $x\mapsto xa$. 
\section{Banach representations}
The goal of this section is twofold.
We introduce the terminology about norms and Banach representations that will be used throughout this paper, and we prove some fundamental properties of Banach representations that we will later need.
We address two issues.
The first is a notion of minimality of norms that we call weak minimality.
The second consists of several characterizations of quotients of universal unitary completions of (algebraically) cyclic representations.
Let $G$ be an abstract group and $V$ a representation of $G$ over $\C_p$.
Under the assumptions that $V$ is cyclic, $V$ has a universal unitary completion in the sense of \cite{emerton2005p} that we denote by $\Ucomp{V}$.
The quotients of $\Ucomp{V}$ by closed sub-representations will play an important role in this paper, especially quotients by maximal sub-representations.
We give two intrinsic characterizations of these quotients: in terms of a special type of norms which we call \textit{locally maximal} and in terms of the existence of a special type of vectors which we call \textit{strongly cyclic}.

\subsection{General terminology and notation}
Let $V$ be vector space over $\C_p$.
A norm on $V$ is a map $\norm{}:V\map \R_{\geq 0}$ such that 
 \begin{enumerate}
    \item $\norm{v}=0$ if and only if $v=0$.
    \item $\norm{a\cdot v}=\abs{a}_p\cdot \norm{v}$ for any $v\in V$ and $a\in\C_p$.
    \item $\norm{v_1+v_2}\leq \max(\norm{v_1},\norm{v_2})$.
    \end{enumerate}
If $\norm{}$ satisfies only $2$ and $3$ we say that it is a seminorm.

Let $\norm{}_1,\norm{}_2$ be two norms on $V$.
We write $\norm{}_1\leq \norm{}_2$ if $\norm{v}_1\leq \norm{v}_2$ for any $v\in V$.
We say that $\norm{}_1$ is dominated by $\norm{}_2$, and denote it by $\norm{}_1\dominated \norm{}_2$ if there exists a constant $D>0$ such that $\norm{}_1\leq D\cdot \norm{}_2$.
We say that $\norm{}_1$ and $\norm{}_2$ are equivalent if each dominates the other: $\norm{}_1\dominated \norm{}_2$ and $\norm{}_2\dominated \norm{}_1$.
These two norms are called homothetic if there exists $c>0$ such that $\norm{v}_1=c\cdot \norm{v}_2$ for any $v\in V$. 

If $v\in V$ is a non-zero vector, we say that $\norm{}$ is normalized at $v$ if $\norm{v}=1$.
In any homothety class of norms there is exactly one norm that is normalized at $v$.

Given a norm $\norm{}$ on $V$, we denote the completion of $V$ with respect to $\norm{}$ by $\completion{V}{\norm{}}$.

Assume that a group $G$ acts on $V$.    
A norm $\norm{}$ on $V$ is said to be $G$-invariant if $\norm{gv}=\norm{v}$ for any $v\in V$ and $g\in G$.    
When there is no ambiguity about the group $G$, we will simply say that $\norm{}$ is an invariant norm.
We denote the set of norms on $V$ by $\Norms(V)$ and by $\Norms(V)^G$ its subset of $G$-invariant norms.

In this paper the term \textit{Banach representation} means the following.
\begin{defn}\label{def_Banach_rep}
A Banach representation (over $\C_p$) of $G$ is a pair $(B,\norm{})$ of a $G$-representation $B$ and a $G$-invariant norm $\norm{}$ such that $B$ is complete with respect to $\norm{}$.
\end{defn} 

A morphism of Banach representations of $G$ is a continuous $G$-equivariant map, but it need not be an isometry.
In particular, isomorphic Banach representations of $G$ are not necessarily isometric.

\subsection{Weakly minimal norms}
\begin{defn}
Let $(B,\norm{})$ be a Banach representation of the group $G$ and $v\in B$ a non-zero vector.
We say that $\norm{}$ is weakly minimal at $v$ if the following holds.
    \begin{itemize}
    \item For any $G$-invariant norm $\norm{}'$ on $B$ such that $\norm{}'\leq \norm{}$ and $\norm{v}'=\norm{v}$, we have $\norm{}'=\norm{}$.
    \end{itemize}
\end{defn} 

\begin{lem}
Let $(B,\norm{})$ be a Banach representation of $G$.
Assume that the values of $\norm{}$ are the same as the values of $\abs{}_p$ on $\C_p$.
Assume that $v\in B$ is a non-zero vector such that $\norm{v}=1$ and such that its image $\overline{v}$ in the quotient 
\[\overline{B}(\norm{}):=\quot{\braces{v\in B\ |\ \norm{v}\leq 1}}{\braces{v\in B\ |\ \norm{v}< 1}}\]
is contained in any non-zero sub-representation of $\overline{B}(\norm{})$.
Then $\norm{}$ is weakly minimal at $v$.
\end{lem}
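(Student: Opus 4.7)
The plan is to compare the $G$-invariant sub-$\O_{\C_p}$-module structure arising from $\norm{}'$ with the reduction $\overline{B}(\norm{})$, using the hypothesis on $\overline{v}$ to force equality on the unit sphere, and then use the assumption on the value group to extend this equality to all of $B$.

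Concretely, I would start by considering the subset
\[
N \;=\; \braces{w \in B \,:\, \norm{w}\leq 1 \ \text{and}\ \norm{w}'<1},
\]
where $\norm{}'$ is an arbitrary $G$-invariant norm with $\norm{}'\leq \norm{}$ and $\norm{v}'=\norm{v}=1$. A short routine verification (using the non-Archimedean triangle inequality for both norms, the fact that $\abs{a}_p\cdot \norm{w}'<1$ whenever $\abs{a}_p\leq 1$ and $\norm{w}'<1$, and $G$-invariance of both norms) shows that $N$ is a $G$-stable $\O_{\C_p}$-submodule of the closed unit ball $\braces{w\in B:\norm{w}\leq 1}$ and contains the open unit ball $\braces{w\in B:\norm{w}<1}$. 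Therefore its image $\overline{N}$ in the quotient $\overline{B}(\norm{})$ is a $G$-sub-representation.

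Next I would show $\overline{N}=0$. Suppose not; then by hypothesis $\overline{v}\in \overline{N}$, so there exists $w\in B$ with $\norm{w}<1$ such that $v+w\in N$, i.e.\ $\norm{v+w}'<1$. Since $\norm{w}'\leq \norm{w}<1$, the ultrametric inequality gives
\[
\norm{v}' \;=\; \norm{(v+w)-w}' \;\leq\; \max\sog{\norm{v+w}',\ \norm{w}'} \;<\; 1,
\]
contradicting $\norm{v}'=1$. Hence $\overline{N}=0$, which unfolds to the implication: if $\norm{w}=1$ then $\norm{w}'=1$.

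Finally I would promote this equality on the unit sphere to equality on all of $B$. Given any nonzero $u\in B$, the assumption that the values of $\norm{}$ coincide with $\abs{\C_p^\times}_p$ provides $a\in \C_p^\times$ with $\abs{a}_p=\norm{u}$, so $\norm{a^{-1}u}=1$ and therefore $\norm{a^{-1}u}'=1$, whence $\norm{u}'=\abs{a}_p=\norm{u}$. Thus $\norm{}'=\norm{}$, which is exactly weak minimality at $v$. I do not foresee a substantial obstacle here; the main conceptual step is simply recognising that the ``defect set'' $N$ produces a $G$-sub-representation of $\overline{B}(\norm{})$ on which the hypothesis on $\overline{v}$ can be applied, and the value-group assumption is precisely what lets the argument descend from the unit sphere to all vectors.
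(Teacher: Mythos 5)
Your proof is correct and takes essentially the same route as the paper: the set $N$ you define is precisely the preimage of the kernel of the map $T:\overline{B}(\norm{})\to\overline{B}(\norm{}')$ induced by the identity, which is how the paper phrases the argument; in both cases the hypothesis on $\overline{v}$ forces that kernel/$\overline{N}$ to vanish, yielding $\norm{w}=1\Rightarrow\norm{w}'=1$, and the value-group assumption promotes this to $\norm{}'=\norm{}$.
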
 
	\begin{proof}
    Let $\norm{}'\in \Norms(B)^G$ be a $G$-invariant norm such that $\norm{v}'=1$ and $\norm{}'\leq \norm{}$.
    The identity map $Id:B\map B$ induces a map $T:\overline{B}(\norm{})\map \overline{B}(\norm{}')$.
    The kernel of $T$ is a sub-representation of $\overline{B}(\norm{})$ that does not contain $v$, hence by assumption, this kernel is trivial.
    It follows that $T$ is injective.
    Therefore, $\norm{w}'=1$ for any $w$ with $\norm{w}=1$.
    Since the values of $\norm{}$ are the same as the values of $\abs{}_p$, $\norm{}'=\norm{}$.
    \end{proof} 
    
\begin{prop}\label{prop_weak_minimality}
Let $G$ be a pro-$p$ group.
Let $C(G)$ denote the space of continuous functions on $G$ with values in $\C_p$ and let $\supnorm{}$ be the sup norm on $C(G)$. 
Consider the action of $G$ on $C(G)$ by right translations.
The sup norm is weakly minimal at $\textbf{1}$, where $\textbf{1}$ denotes the constant function $\textbf{1}(x)=1$.
\end{prop}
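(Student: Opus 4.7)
The plan is to apply the preceding lemma with $V = C(G)$, $\norm{} = \supnorm{}$, and $v = \mathbf{1}$. Two hypotheses must be verified: that the values of $\supnorm{}$ coincide with those of $\abs{}_p$, and that the image $\overline{\mathbf{1}}$ is contained in every non-zero sub-representation of the reduction $\overline{C(G)}(\supnorm{})$.

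The first hypothesis is immediate: since $G$ is compact and $\abs{f(\cdot)}_p$ is a real-valued continuous function on $G$, the supremum $\supnorm{f}$ is attained at some point, hence lies in $\abs{\C_p^{\times}}_p\cup\{0\}$. It remains to check that $\overline{\mathbf{1}}$ is contained in every non-zero sub-representation $W\subseteq \overline{C(G)}(\supnorm{})$. For this, I would first identify the reduction concretely. The open and closed unit balls for $\supnorm{}$ consist of continuous functions valued in $\mathcal{O}_{\C_p}$ and in the maximal ideal, respectively, so the quotient is naturally identified with the space of continuous functions $G\to \mathcal{O}_{\C_p}/\mathfrak{m}_{\C_p}=\overline{\fld{F}}_p$, where the target carries the discrete topology. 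Such a function has finite image and its fibers are open, so it factors through some finite quotient $G/H$ with $H\subset G$ open and normal. Because $G$ is pro-$p$, the quotient $G/H$ is a finite $p$-group.

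Now take any non-zero $\overline{f}\in W$ and let $H\trianglelefteq G$ be an open normal subgroup such that $\overline{f}$ factors through $G/H$. Then $\overline{f}$ is right-$H$-invariant, and the same is true for every right translate of $\overline{f}$, so the $G$-stable subspace $W_0\subseteq W$ generated by $\overline{f}$ lies inside the finite-dimensional space of functions $G/H\to \overline{\fld{F}}_p$ and is therefore a finite-dimensional $\overline{\fld{F}}_p$-representation of the finite $p$-group $G/H$. Invoking the classical fact that a non-zero representation of a finite $p$-group over a field of characteristic $p$ has a non-zero fixed vector, $W_0$ contains a non-zero $G$-fixed element. A function fixed by all right translations is constant, hence a non-zero scalar multiple of $\overline{\mathbf{1}}$, so $\overline{\mathbf{1}}\in W_0\subseteq W$.

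There is no real obstacle in this argument; the only non-formal input is the fixed-point theorem for finite $p$-groups acting on $\overline{\fld{F}}_p$-vector spaces, and the slightly delicate but standard identification of the reduction with locally constant $\overline{\fld{F}}_p$-valued functions on $G$, which uses pro-$p$-ness of $G$ precisely to force factorization through a $p$-group quotient.
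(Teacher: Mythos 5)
Your proof is correct and follows essentially the same route as the paper's: identify the reduction of $C(G)$ modulo the sup norm with locally constant $\overline{\fld{F}}_p$-valued functions, reduce to a finite $p$-group quotient, and invoke the characteristic-$p$ fixed-vector theorem. One small slip: you attached ``$\mathcal{O}_{\C_p}$'' and ``maximal ideal'' to ``open ball'' and ``closed ball'' in the wrong order (the closed unit ball is the $\mathcal{O}_{\C_p}$-valued functions, the open one the $\mathfrak{m}_{\C_p}$-valued ones), but this is only a typo and the identification of the quotient is the correct one.
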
 
    \begin{proof}
    Identify the quotient
    \[\quot{\braces{f\in C(G)\ |\ \supnorm{f}\leq 1}}{\braces{f\in C(G)\ |\ \supnorm{f}< 1}}\]
    with the space $\schw(G,\overline{\fld{F}}_p)$ of locally constant functions on $G$ with values in an algebraic closure $\overline{\fld{F}}_p$ of $\fld{F}_p$.
    By the previous lemma, it is enough to show that any non-zero sub-representation of $\schw(G,\overline{\fld{F}}_p)$ contains the constant function $\textbf{1}$.
    Let $f\in \schw(G,\overline{\fld{F}}_p)$ be non-zero and denote by $V$ the sub-representation generated by $f$.
    As $f$ is fixed by some open normal subgroup $N\subset G$, $V$ is a cyclic representation of the finite group $G/N$.
    In particular, $V$ is a finite dimensional representation of the finite $p$-group $G/N$ over $\overline{\fld{F}}_p$.
    Thus, $V$ contains a non-zero $G$-invariant vector $\phi$.
    This $\phi$ is a non-zero constant function.
    \end{proof} 
\subsection{The universal unitary completion of a cyclic representation}
Let $V$ be a representation of $G$ and assume that $v\in V$ is a cyclic vector.
In addition, assume that $\Norms(V)^G$ is non-empty, i.e. there exists a $G$-invariant norm on $V$.

If $\norm{}\in \Norms(V)^G$, its closed unit ball $\braces{w\in V\ |\ \norm{w}\leq 1}$ is an $\mathcal{O}_{\C_p}[G]$-module that contains a non-zero multiple of any vector in $V$, but contains no $\C_p$-lines.
Such an $\mathcal{O}_{\C_p}$-module is called an integral structure.
Conversely, any integral structure $L$ defines a $G$-invariant norm, called the gauge of $L$, by
\[\norm{v}_L=\inf\braces{\abs{a}_p\ |\ v\in a\cdot L}.\]
We stress the fact that in general $L$ might not be equal to the closed unit ball nor to the open unit ball of $\norm{}_L$, but lies strictly between them.
 For future use we record the following formulas for the closed and open unit balls of $\norm{}_L$,
    \begin{equation}\label{equation_close_open_unit_balls}
    \braces{v\in V\ |\ \norm{v}_L\leq 1}=\bigcap_{\substack{\lambda\in \C_p\\ \pabs{\lambda}>1}}\lambda L,\ \ \ 
    \braces{v\in V\ |\ \norm{v}_L< 1}=\bigcup_{\substack{\lambda\in \C_p\\ \pabs{\lambda}<1}}\lambda L.  
    \end{equation} 

Since $\C_p$ is not discretely valued, two different invariant norms give rise to different integral structures, but two different integral structures might define the same norm.
Nevertheless, the correspondence between invariant norms and integral structures inverts order.

The set $L_{v}:=\mathcal{O}_{\C_p}[G]\cdot v$ is an integral structure.
Indeed, it contains a multiple of any vector since $v$ is cyclic, and it contains no lines because of the existence of an invariant norm.
As $L_v$ is the smallest integral structure that contains $v$,
its corresponding norm, which we denote by $\norm{}_v$, is normalized at $v$ and is the maximal invariant norm normalized at $v$.
This means that if $\norm{}\in\Norms(V)^G$ is normalized at $v$, then $\norm{}\leq \norm{}_v$.
We call the norm $\norm{}_{v}$ \textit{the maximal invariant norm at $v$} or \textit{the maximal norm at $v$} for short.

If $v_1,v_2\in V$ are two cyclic vectors of $V$, the norms $\norm{}_{v_1}$ and $\norm{}_{v_2}$ are equivalent.
In particular the completion of $V$ with respect to $\norm{}_v$, where $v$ is a cyclic vector, is independent of $v$ as a topological vector space.
We denote this completion by $\Ucomp{V}$ and call it the universal unitary completion of $V$, or the universal completion for short. 
Note that this is a particular case of Example A in \cite{emerton2005p}.
The universal completion of $V$ has the following universal property: if $(B,\norm{})$ is a Banach representation of $G$ and $T:V\map (B,\norm{})$ is  $G$-equivariant, then $T$ factors continuously through $\Ucomp{V}$.

    \begin{remark}
    The assumption that $\Norms(V)^G\neq \phi$ is superfluous, is made for simplification and because this is the case that will appear later.
    If $V$ does not have a $G$-invariant norm, $\mathcal{O}_{\C_p}[G]\cdot v$ contains $\C_p$-lines.
    The union of these lines is a sub-representation $W\subset V$ and the quotient $V'=V/W$ is cyclic and has an invariant norm.
    The universal completion of $V$ is $\Ucomp{V'}$.    
    \end{remark} 

Any element of $\Ucomp{V}$ can be written as 
\[\sum_{g\in G}\lambda_g\cdot g(v)\]
where $(\lambda_g)_{g\in G}\subset \C_p$ is \textit{summable}, meaning that for any $\eps>0$, at most finitely many of the $\lambda_g$ satisfy $\abs{\lambda_g}_p\geq \eps$.
Conversely, any element of this form is in $\Ucomp{V}$.

\begin{defn}\label{def_local_maximality}
Let $W$ be a representation of $G$ and $\norm{}\in\Norms(W)^G$.
Let $w\in W$ a non-zero vector such that $\norm{}$ is normalized at $w$.
We say that $\norm{}$ is \textit{locally maximal at $w$} if the following property holds.
    \begin{itemize}
    \item For any $\norm{}'\in \Norms(W)^G$ that is normalized at $w$ and is dominated by $\norm{}$ we have $\norm{}'\leq \norm{}$.
    \end{itemize}
\end{defn} 
For example, the norm $\norm{}_v$ on $V$ is a locally maximal norm at $v$.
Were it also locally maximal at another cyclic vector $u$, then the two norms $\norm{}_v$ and $\norm{}_u$, being equivalent, would be homothetic.
Easy examples show that this need not be the case.
Thus a norm which is locally maximal at one cyclic vector is in general not locally maximal at another one.
For another example, consider the space $C_0(\Q_p^d)$ of $\C_p$-valued continuous functions on $\Q_p^d$ that go to zero at infinity, and the action of the Heisenberg group $\heis$ on it by the formula given in the previous section.
We will later show that the sup norm $\supnorm{}$ is a locally maximal norm at $\textbf{1}_{\Z_p^d}(x)$ on $C_0(\Q_p^d)$, where $\textbf{1}_{\Z_p^d}$ is the characteristic function of $\Z_p^d$.

\begin{defn}\label{def_srongly_cyclic_vector}
Let $(B,\norm{})$ be a Banach representation of $G$ and $v\in B$ a non-zero vector.
We say that $v$ is \textit{topologically cyclic} if $v$ generates (algebraically) a dense representation in $B$.
We say that $v$ is \textit{strongly cyclic} if any $w\in V$ can be written as
\[w=\sum_{g\in G}\lambda_g\cdot g(v),\]
where the $(\lambda_g)_{g\in G}$ is summable.
\end{defn} 
For example, if $v$ is a cyclic vector in $V$, $v$ is strongly cyclic in $\Ucomp{V}$.
In the end of this section we give an example of a topologically cyclic vector which is not a strongly cyclic vector.

We make the following two observations.
Let $T:B'\map B$ be a map of Banach representations of $G$.
    \begin{itemize}
    \item Assume that the image of $T$ contains a strongly cyclic vector. 
    Then $T$ is surjective.
    \item Assume that $v'\in B'$ is strongly cyclic and that $T$ is surjective.
    Then $v=T(v')$ is strongly cyclic in $B$.
    \end{itemize}

We begin with two lemmas.
The first says that a quotient norm of a locally maximal one is locally maximal.
The second says that strongly cyclic vectors give rise to locally maximal norms.

\begin{lem}\label{lem_quotient_locally_maximal_norms}
Let $W$ be a representation of $G$, $\norm{}\in \Norms(W)^G$ and $K\subset W$ a closed (with respect to $\norm{}$) sub-representation.
Assume that $\norm{}$ is normalized and locally maximal at $w\in W$.
Then the quotient norm on $W/K$ is normalized and locally maximal at the image of $w$.
\end{lem}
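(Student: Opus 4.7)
Write $\pi\colon W\to W/K$ for the quotient and $\norm{}_q$ for the induced quotient norm; since $K$ is closed, $\norm{}_q$ is genuinely a norm (not merely a seminorm), and it is clearly $G$-invariant. The plan is to handle normalization and local maximality by a single device: given any $G$-invariant seminorm $\sigma$ on $W/K$, pulling back along $\pi$ produces only a seminorm on $W$, but the perturbation $\max(\eps\norm{v}, \sigma(\pi(v)))$ is a genuine $G$-invariant norm on $W$ for every $\eps>0$, and it remains dominated by $\norm{}$. Such perturbed norms can then be fed into the local maximality of $\norm{}$ at $w$, provided $\eps$ is chosen small enough that the value at $w$ is controlled by the pullback and not by $\eps\norm{w}$.

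For the normalization claim, set $\alpha:=\norm{\pi(w)}_q$, so $\alpha\leq \norm{w}=1$; the goal is $\alpha=1$ (the degenerate case $W=K$, for which the conclusion is vacuous, will be ruled out in passing). Suppose $0<\alpha<1$; then there exists $k_0\in K$ with $\norm{w+k_0}<1$. For any $0<\eps<\alpha$, the $G$-invariant norm $\norm{v}_\eps:=\max(\eps\norm{v},\norm{\pi(v)}_q)$ satisfies $\norm{v}_\eps\leq \norm{v}$ for all $v$ and $\norm{w}_\eps=\alpha$. Hence $\alpha^{-1}\norm{}_\eps$ is normalized at $w$ and dominated by $\norm{}$, so local maximality yields $\alpha^{-1}\norm{}_\eps\leq \norm{}$, i.e.\ $\norm{\pi(v)}_q\leq \alpha\norm{v}$ for every $v\in W$. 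Evaluating at $v=w+k_0$ gives $\alpha=\norm{\pi(w)}_q\leq \alpha\norm{w+k_0}<\alpha$, a contradiction. The parallel argument with $\alpha=0$, using $\eps^{-1}\norm{}_\eps$ instead of $\alpha^{-1}\norm{}_\eps$, forces $\norm{\pi(v)}_q\leq \eps\norm{v}$ for every $\eps>0$, hence $W=K$, which is the excluded degenerate case. Therefore $\norm{\pi(w)}_q=1$.

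For local maximality, let $\norm{}'_q\in \Norms(W/K)^G$ be normalized at $\pi(w)$ and dominated by $\norm{}_q$, with constant $D>0$. For any $0<\eps<1$, the $G$-invariant norm $\norm{v}_\eps:=\max(\eps\norm{v},\norm{\pi(v)}'_q)$ on $W$ satisfies $\norm{w}_\eps=1$ (since $\norm{\pi(w)}'_q=1\geq \eps$) and is dominated by $\max(\eps,D)\norm{}$ (using $\norm{\pi(v)}'_q\leq D\norm{\pi(v)}_q\leq D\norm{v}$). Local maximality of $\norm{}$ at $w$ therefore forces $\norm{}_\eps\leq \norm{}$, so $\norm{\pi(v)}'_q\leq \norm{v}$ for every $v\in W$. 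Taking the infimum over $k\in K$ in the identity $\norm{\pi(v)}'_q=\norm{\pi(v+k)}'_q\leq \norm{v+k}$ yields $\norm{}'_q\leq \norm{}_q$, as required. The only real ingredient beyond formalities is the $\eps$-perturbation, whose sole purpose is to repair the norm axiom destroyed by pullback along $\pi$ without inflating the value at $w$.
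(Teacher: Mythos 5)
Your proof is correct and uses the same core idea as the paper's: pull back the candidate seminorm along $\pi$, repair it into a genuine $G$-invariant norm without disturbing its value at $w$, and invoke local maximality of $\norm{}$. The paper repairs the pullback by taking $\max(\norm{}_2,\norm{})$, which is automatically a norm, normalized at $w$, and dominated by $\norm{}$ (so local maximality forces it to equal $\norm{}$, whence $\norm{}_2\leq\norm{}$); you instead take $\max(\eps\norm{},\norm{}_2)$ and then tune $\eps$. The paper's choice is a bit slicker since it needs no $\eps$. On the other hand, your write-up explicitly verifies the normalization assertion $\norm{\pi(w)}_q=1$ and the non-degeneracy $w\notin K$, which the paper's proof leaves implicit; this can be obtained even more quickly by feeding $\norm{}_2=\alpha^{-1}\norm{}_q$ into the local-maximality inequality you have just established, giving $\alpha^{-1}\leq 1$ and hence $\alpha=1$ (for $\alpha>0$), without invoking a special $k_0$.
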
 
    \begin{proof}
    Let $\norm{}'$ denote the quotient norm on $W/K$.
    Let $\norm{}_2$ be a $G$-invariant norm on $W/K$ that is normalized at the image of $w$ and dominated by $\norm{}'$.
    Using the quotient map $W\map W/K$ we view $\norm{}_2$ also as a semi-norm on $W$.
    Then $\max(\norm{}_2,\norm{})$ is a $G$-invariant norm on $W$ that is normalized at $w$ and dominated by $\norm{}$.
    Thus, $\norm{}_2\leq \norm{}$, as semi-norms on $W$.
    Taking the quotient by $K$, we obtain $\norm{}_2\leq \norm{}'$, as norms on $W/K$.    
    \end{proof}

\begin{lem}\label{lem_locally_maximal_from_strongly_cyclic}
Assume that $(B,\norm{})$ is a Banach representation of $G$ and that $0\neq v\in B$ is a strongly cyclic vector.
Then there exists a unique norm, which we denote by $\norm{}_{v,B}$, which is normalized and locally maximal at $v$ and is equivalent to $\norm{}$.
In addition, if $w\in B$ with $\norm{w}_{v,B}=r$, then for any $\eps>0$ there exists a summable sequence $(\lambda_g)_{g\in G}$ such that 
\[w=\sum_{g\in G}\lambda_g\cdot g(v)\]
and $\max_{g\in G}\pabs{\lambda_g}<(1+\eps)\cdot r$.
\end{lem}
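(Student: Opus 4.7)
The plan is to realize $\norm{}_{v,B}$ as the transport to $B$ of a quotient norm on a universal unitary completion, and to read off its properties using Lemma \ref{lem_quotient_locally_maximal_norms} together with the non-archimedean open mapping theorem.

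First I would form the algebraic subrepresentation $V = \C_p[G]\cdot v \subset B$ with the restricted norm $\norm{}|_V$, which is $G$-invariant. Then $V$ is a cyclic representation admitting an invariant norm, so it has a universal unitary completion $\Ucomp{V}$ carrying the maximal $G$-invariant norm $\norm{}_v$ normalized at $v$; a short density argument shows that $\norm{}_v$ remains locally maximal at $v$ after being extended to $\Ucomp{V}$ (any dominated invariant norm normalized at $v$ restricts to one on $V$ bounded by $\norm{}_v$, and extends back by continuity). The inclusion $V \hookrightarrow (B,\norm{})$ is continuous and $G$-equivariant, so by the universal property it extends uniquely to a continuous $G$-equivariant map $\pi:\Ucomp{V}\to B$. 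Strong cyclicity of $v$ in $B$ makes $\pi$ surjective: given any summable representation $w = \sum_g \lambda_g\cdot g(v)$ in $B$, the same coefficients define a convergent series in $\Ucomp{V}$ (since $\norm{\lambda_g g(v)}_v = \pabs{\lambda_g} \to 0$) whose sum maps to $w$.

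Let $K = \ker \pi$, a closed sub-representation. By the non-archimedean open mapping theorem the induced map $\bar\pi:\Ucomp{V}/K \to B$ is a topological isomorphism of Banach representations. I would then define $\norm{}_{v,B}$ on $B$ as the transport of the quotient norm on $\Ucomp{V}/K$ via $\bar\pi$. By Lemma \ref{lem_quotient_locally_maximal_norms} the quotient norm is locally maximal at the class of $v$; transporting back, $\norm{}_{v,B}$ is $G$-invariant, equivalent to $\norm{}$, and locally maximal at $v$. Using the standard infimum formula $\norm{x}_v = \inf\{\max_g \pabs{\lambda_g} : x = \sum_g \lambda_g g(v)\}$ on $\Ucomp{V}$ together with the surjectivity of $\pi$, the quotient norm unwinds explicitly to
\[\norm{w}_{v,B} = \inf\braces*{\max_g \pabs{\lambda_g}\ :\ w = \sum_g \lambda_g\cdot g(v)\text{ in }B,\ (\lambda_g)\text{ summable}},\]
from which the ``in addition'' approximation claim is immediate. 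Normalization $\norm{v}_{v,B} = 1$ follows by combining the upper bound from the representation $v = 1\cdot v$ with the elementary lower bound $\norm{w} \leq \norm{v} \cdot \norm{w}_{v,B}$ (from $G$-invariance of $\norm{}$ and passing to the limit in the ultrametric inequality on partial sums) evaluated at $w = v$.

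For uniqueness, if $\norm{}_1$ and $\norm{}_2$ both satisfy the conclusion, they are both normalized at $v$ and both equivalent to $\norm{}$, hence equivalent to each other; local maximality of each, applied to the other, forces $\norm{}_1 = \norm{}_2$. The main obstacle is the appeal to the non-archimedean open mapping theorem, which requires care over $\C_p$ (not spherically complete) but is available for Banach spaces in this setting; once past that step the proof reduces to careful bookkeeping around the infimum formula for $\norm{}_v$ and the continuity of $\pi$.
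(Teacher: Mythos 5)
Your proof is correct, but it reaches the conclusion by a genuinely different route than the paper. The paper works entirely inside $B$: it forms the set $L$ of summable sums $\sum_g\lambda_g g(v)$ with $\pabs{\lambda_g}\leq 1$, takes its closure $\overline{L}$, shows $\overline{L}$ is an open integral structure directly via Baire's category theorem, defines $\norm{}_{v,B}$ as the gauge of $\overline{L}$, checks local maximality by a short unit-ball argument, and then proves the approximation clause by an explicit successive-approximation loop. You instead route the construction through the universal unitary completion: build $\pi:\Ucomp{V}\to B$, observe strong cyclicity makes $\pi$ surjective, invoke the non-archimedean open mapping theorem to identify $B$ with $\Ucomp{V}/K$, and then pull the quotient norm back across $\bar\pi$, letting Lemma \ref{lem_quotient_locally_maximal_norms} deliver local maximality. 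This is conceptually cleaner and essentially absorbs the $(1)\Rightarrow(2)$ step of Theorem \ref{thm_strongly_cyclic_spaces} into the proof of the lemma, so the later theorem becomes nearly tautological under your organization. The trade-off is that you compress the real analytic content into the phrase ``standard infimum formula'' for $\norm{}_v$ on $\Ucomp{V}$: establishing $\norm{x}_v=\inf\braces*{\max_g\pabs{\lambda_g}:x=\sum_g\lambda_g g(v)}$ on the completion (not just on $V$) already requires the same successive-approximation argument the paper writes out for the ``in addition'' clause, so no work is actually saved there — it is merely relocated. You also rely on the open mapping theorem where the paper invokes Baire directly; these are the same depth of tool, and you are right that OMT holds over $\C_p$ without spherical completeness. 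The one spot worth spelling out more carefully in a final write-up is the density argument showing $\norm{}_v$ remains locally maximal at $v$ after passing from $V$ to $\Ucomp{V}$: the key point, which you gesture at correctly, is that a competing norm $\norm{}'$ dominated by $\norm{}_v$ is automatically continuous in the $\norm{}_v$-topology, so the inequality $\norm{}'\leq\norm{}_v$ proved on the dense subspace $V$ extends to $\Ucomp{V}$.
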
 

    \begin{proof}
    The uniqueness of a normalized and locally maximal norm at $v$ is clear.
    In the rest of the proof we construct the norm $\norm{}_{v,B}$ using an integral structure and show the additional property.
    
    Let $\overline{L}$ be the closure in $B$ of 
    \[L=\braces*{\sum_{g\in G}\lambda_g\cdot g(v)\ |\ (\lambda_g)_{g\in G} \text{ is summable and } \abs{\lambda_g}_p\leq 1 \text{ for all }g\in G}.\]
    Assume, for convenience, that $\norm{v}=1$. 
    We first show that $\overline{L}$ is an open integral structure.
    It is straightforward that $\overline{L}$ is an integral structure, the only non-obvious part is that $\overline{L}$ contains no $\C_p$-lines.
    This is true since $L$, and therefore $\overline{L}$, is contained in the closed unit ball of $\norm{}$.
    We now show that $\overline{L}$ is open.
    Since $v$ is strongly cyclic, $B=\bigcup_{n=0}^\infty p^{-n}\cdot \overline{L}$.
    Since $B$ is a complete metric space, it follows from Baire's category theorem that $\overline{L}$ has a non-empty interior, and since it is a topological subgroup, it must be open.
    Let $\norm{}_{v,B}$ be the norm that corresponds to $\overline{L}$.
    Since $\overline{L}$ is an open integral structure, $\norm{}_{v,B}$ is a $G$-invariant norm and equivalent to $\norm{}$.
    
    Now we show that $\norm{}_{v,B}$ is normalized at $v$ and locally maximal at $v$.
    Since $v\in \overline{L}$, it follows that $\norm{v}_{v,B}\leq 1$.
    Let $\norm{}'$ be a $G$-invariant norm dominated by $\norm{}_{v,B}$ and normalized at $v$.
    The closed unit ball of $\norm{}'$ contains $L$, and since
    \[\norm{}'\dominated\norm{}_{v,B}\dominated \norm{}\]
    its unit ball is closed in $B$.
    Thus, the unit ball of $\norm{}'$ contains $\overline{L}$.
    Therefore, $\norm{}'\leq \norm{}_{v,B}$.
    Substituting $v$, we see that $\norm{v}_{v,B}\geq 1$, so $\norm{}_{v,B}$ is normalized at $v$ and locally maximal at $v$.
    
    Finally, we prove the additional property.
    Let $w\in B$ and let $\eps>0$.
    We may assume that $(1+\eps)^{-1}<\norm{w}_{v,B}<1$.
    By \ref{equation_close_open_unit_balls} (formula for the open unit ball), $w\in\overline{L}$, so there exists $w_0\in L$ such that $\norm{w-w_0}_{v,B}<p^{-1}$.
    Similarly, there exists $w_1\in pL$ such that $\norm{w-w_0-p\cdot w_1}_{v,B}<p^{-2}$.
    Continuing in this manner we obtain a sequence $(w_n)_{n=0}^\infty$, where $w_n\in p^n\cdot L$ for all $n$, and $w=\sum_{n=0}^\infty w_n$.
    Therefore, $w\in L$, so it can be written as $w=\sum_{g\in G}\lambda_g\cdot g(v)$ and $\max_{g\in G}\pabs{\lambda_g}<1<(1+\eps)\cdot \norm{w}_{v,B}$.
    \end{proof} 

    \begin{remark}
    Note that the last step in the proof can be modified slightly to show that $L=\overline{L}$.
    However, the closed unit ball of $\norm{}_{v,B}$ might be strictly larger than $L$.
    \end{remark}

\begin{thm}\label{thm_strongly_cyclic_spaces}
Let $(B,\norm{})$ be a Banach representation of $G$ and $v\in B$ a non-zero vector.
The following are equivalent.
    \begin{enumerate}
    \item $v$ is a strongly cyclic vector of $B$ and $\norm{}=\norm{}_{v,B}$.
    \item Let $V$ be the (algebraic) sub-representation generated by $v$.
    Then the map $I:\Ucomp{V}\map B$ is surjective and if $K$ is its kernel, the induced map $\Ucomp{V}/K\map B$ is an isometry when we equip $\Ucomp{V}$ with the norm $\norm{}_v$.
    \item $\norm{}$ is normalized at $v$ and locally maximal at $v$.
    \end{enumerate}
\end{thm}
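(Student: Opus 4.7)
The plan is to close the cycle $(1) \Rightarrow (2) \Rightarrow (3) \Rightarrow (2) \Rightarrow (1)$, with the genuinely new content lying in the arrow $(3) \Rightarrow (2)$; the other implications are short applications of the two preceding lemmas.

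For $(1) \Rightarrow (2)$: the image of $I : \Ucomp{V} \to B$ contains the strongly cyclic vector $v$, hence $I$ is surjective by the first observation following Definition~\ref{def_srongly_cyclic_vector}. Write $\norm{}_{\mathrm{quot}}$ for the quotient norm on $\Ucomp{V}/K$. Continuity of $I$ gives $\norm{I(\xi)}_B \leq \norm{\xi}_v$, hence $\norm{w}_B \leq \norm{\bar w}_{\mathrm{quot}}$ for $\bar w$ the class of any preimage. For the reverse, the additional property of Lemma~\ref{lem_locally_maximal_from_strongly_cyclic} produces, for every $w \in B$ and $\eps > 0$, a summable $(\lambda_g)$ with $w = \sum_g \lambda_g g(v)$ and $\max_g \pabs{\lambda_g} < (1+\eps)\norm{w}_B$; the corresponding $\xi := \sum_g \lambda_g g(v) \in \Ucomp{V}$ satisfies $I(\xi) = w$ and $\norm{\xi}_v \leq \max_g \pabs{\lambda_g}$, so $\norm{\bar w}_{\mathrm{quot}} < (1+\eps)\norm{w}_B$, and $\eps \to 0$ gives equality. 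For $(2) \Rightarrow (3)$: on $\Ucomp{V}$ the norm $\norm{}_v$ is normalized and locally maximal at $v$, so Lemma~\ref{lem_quotient_locally_maximal_norms} yields the same for the quotient norm at $\bar v$, which transfers to $\norm{}$ at $v$ via the assumed isometry.

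The main step is $(3) \Rightarrow (2)$. Let $V$, $I$, $K$ be as in the statement, set $B_0 := I(\Ucomp{V}) \subseteq B$, and transport $\norm{}_{\mathrm{quot}}$ to a norm $\norm{}_0$ on $B_0$, making $(B_0, \norm{}_0)$ a Banach representation with $\norm{w} \leq \norm{w}_0$ for $w \in B_0$. For each parameter $f \geq 1$, introduce the auxiliary function on $B$
\[
\norm{w}^*_f := \inf_{w_0 \in B_0}\max\bigl(\norm{w_0}_0,\; f \cdot \norm{w - w_0}\bigr).
\]
A direct check shows that $\norm{}^*_f$ is a $G$-invariant norm on $B$, normalized at $v$ (for $w_0$ with $\norm{v - w_0} < 1/f$ the ultrametric forces $\norm{w_0}_0 \geq \norm{w_0} = 1$), and dominated by $\norm{}$ (take $w_0 = 0$ to see $\norm{}^*_f \leq f \cdot \norm{}$). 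Local maximality of $\norm{}$ at $v$ therefore forces $\norm{}^*_f \leq \norm{}$ for every $f \geq 1$, which unpacks as: for every $w \in B$, $\eps > 0$ and $f \geq 1$ there exists $w_0 \in B_0$ with $\norm{w_0}_0 \leq \norm{w} + \eps$ and $\norm{w - w_0} \leq (\norm{w} + \eps)/f$.

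Given $w \in B$, apply this iteratively to the residuals with $f_n = 2^n$ and $\eps_n = 2^{-n}$ to construct a sequence $(w_n) \subset B_0$ whose $\norm{}$-errors $r_n := \norm{w - w_n}$ satisfy $r_n \leq r_{n-1}/2^n + 4^{-n}$ and whose increments $\delta_n := w_n - w_{n-1}$ satisfy $\norm{\delta_n}_0 \leq r_{n-1} + 2^{-n}$. Since $r_n \to 0$ rapidly, the increments $\norm{\delta_n}_0 \to 0$, so in the ultrametric setting $(w_n)$ is Cauchy in $\norm{}_0$; by completeness of $(B_0,\norm{}_0)$ it converges to some $w^* \in B_0$, and since $\norm{} \leq \norm{}_0$ the same sequence converges to $w^*$ in $\norm{}$, forcing $w = w^* \in B_0$. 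Hence $B_0 = B$ and $I$ is surjective. The open mapping theorem then gives $\norm{}_0 \dominated \norm{}$, and a final application of local maximality upgrades this to $\norm{}_0 = \norm{}$, which is~(2). Finally $(2) \Rightarrow (1)$ follows because $\bar v$ is strongly cyclic in $\Ucomp{V}/K$ and the surjection carries it to $v$ (second observation), while the uniqueness clause of Lemma~\ref{lem_locally_maximal_from_strongly_cyclic} identifies $\norm{}$ with $\norm{}_{v,B}$.

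The hard part is $(3) \Rightarrow (2)$: one must transmute local maximality, which a priori is a statement about norms on all of $B$, into an effective approximation scheme for points in $B$ by points in the possibly-smaller subspace $B_0$ that simultaneously controls their strictly stronger $\norm{}_0$-size. The family $\norm{}^*_f$ is engineered to interpolate between $\norm{}_0$ on $B_0$ and $\norm{}$ on $B$, and calibrating $f_n$, $\eps_n$ so that the forced $\norm{}$-accuracy outpaces the bound on the $\norm{}_0$-size is what ultimately delivers a Cauchy sequence in $\norm{}_0$ rather than merely in $\norm{}$.
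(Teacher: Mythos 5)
Your proof is correct, and its only genuinely original component — the implication you route as $(3)\Rightarrow(2)$ — differs in mechanics from the paper's treatment, which proves $(3)\Rightarrow(1)$ directly. Both hinge on the same leverage: manufacture from $\norm{}$ and the universal-completion data a family of $G$-invariant auxiliary norms on $B$ that are dominated by $\norm{}$ and normalized at $v$, then use local maximality to force these auxiliary norms below $\norm{}$ and read off an approximation of an arbitrary $w\in B$ by elements coming from $\Ucomp{V}$.

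The paper realizes this with a \emph{single} auxiliary object: the gauge $\norm{}'$ of the open integral structure $L+p^2 D$, where $L=\mathcal{O}_{\C_p}[G]\cdot v$ and $D$ is the closed unit ball of $\norm{}$. Local maximality forces $\norm{}'\leq\norm{}$, so $D\subset p^{-1}L+pD$, which hands over an \emph{exact} one-step decomposition $w=p^{-1}x_1+p\,d_1$ with $x_1\in L$, $d_1\in D$. Iterating yields $w=\sum_n p^{n-1}x_n$, an explicit series in $\mathcal{O}_{\C_p}[G]\cdot v$; no appeal to completeness of a quotient, no Cauchy bookkeeping, no open mapping theorem. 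You instead introduce the continuum of norms $\norm{}^*_f=\inf_{w_0\in B_0}\max(\norm{w_0}_0,\,f\norm{w-w_0})$ and dial $f$ upward to squeeze the $\norm{}$-error of the approximation faster than the $\norm{}_0$-size of the increments grows, so that the $(w_n)$ become Cauchy in the stronger norm $\norm{}_0$; completeness of $\Ucomp{V}/K$ then gives $B_0=B$, and the open mapping theorem plus one more appeal to local maximality gives the isometry. This is sound but requires you to verify a number of ancillary facts (that each $\norm{}^*_f$ is a norm, its normalization at $v$, the calibration of $f_n,\eps_n$, and the completeness of $(B_0,\norm{}_0)$). The paper's argument buys you an explicit strongly-cyclic expansion essentially for free and bypasses the open mapping theorem; your argument buys a slightly more ``soft-analytic'' viewpoint, proving surjectivity first and isometry second rather than exhibiting the series outright. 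Both are valid, but it is worth noting that the paper's trick of intersecting $D$ with the gauge formula for the closed unit ball compresses your entire $f$-calibration into the single fixed scale $p^2$, and you may find it illuminating to see how the scaling by $p$ in each iteration does the work that your $f_n=2^n$ does.
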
 
    \begin{proof}
    We will show $(1)\Rightarrow (2)\Rightarrow (3)\Rightarrow (1)$.
    Assume $(1)$. 
    Since $v$ is strongly cyclic, the map $I:\Ucomp{V}\map B$ is surjective. 
    Equip $\Ucomp{V}$ with the norm $\norm{}_v$ and let $\norm{}'$ denote the quotient norm on $\Ucomp{V}/K$.
    Via $I$, we view $\norm{}'$ as a norm on $B$.
    By the open mapping theorem, $\norm{}'$ and $\norm{}=\norm{}_{v,B}$ are equivalent.
    By Lemmas \ref{lem_quotient_locally_maximal_norms} and \ref{lem_locally_maximal_from_strongly_cyclic}, these two norms are normalized and locally maximal at $v$.
    Thus, they are equal.
    
    Assume $(2)$.
    $(3)$ Follows from Lemma \ref{lem_quotient_locally_maximal_norms}.
    
    Assume $(3)$.
    We will prove $(1)$.
    We assume that $\norm{}$ is normalized and locally maximal at $v$.
    Let $w\in B$; we want to show that $w$ is of the form $\sum_{g\in G}\lambda_g\cdot g(v)$, where $(\lambda_g)_{g\in G}$ is summable.
    We may assume that $\norm{w}\leq 1$.
    Let $L=\mathcal{O}_{\C_p}[G]\cdot v$ and let $D$ be the closed unit ball of $\norm{}$.
    Then $L+p^2\cdot D$ is an open integral structure in $B$ that contains $v$.
    Its corresponding norm, that we denote by $\norm{}'$, is dominated by $\norm{}$ and satisfies $\norm{v}'\leq 1$.
    Therefore, by $(3)$, $\norm{}'\leq \norm{}$.
    By \ref{equation_close_open_unit_balls} (formula for the closed unit ball) it follows that 
    \[w\in D\subset\bigcap_{\substack{\lambda\in\C_p}{\pabs{\lambda}>1}}\lambda(L+p^2\cdot D)\subset p^{-1}(L+p^2\cdot D)=p^{-1}L+pD.\]
    Thus, there exist $x_1\in L$ and $d_1\in D$ such that $w=p^{-1}x_1+p\cdot d_1$.
    Repeating this process with $d_1$ instead of $w$, there exist $x_2\in L$ and $d_2\in D$ such that $d_1=p^{-1}x_2+pd_2$.
    Thus, $w=p^{-1}x_1+p(p^{-1}x_2+pd_2)=p^{-1}x_1+x_2+p^2d_2$.
    Repeating this process, we obtain sequences $(x_n)_{n=1}^\infty\subset L$ and $(d_n)_{n=1}^\infty \subset D$ such that $d_n=p^{-1}x_{n+1}+pd_{n+1}$ for any $n\geq 0$.
    Thus, $w=\sum_{n=0}^\infty p^{n-1}\cdot x_n$ which is of the desired form.    
    \end{proof} 

In particular, if $(B,\norm{})$ is a Banach representation of $G$, the following are equivalent.
    \begin{enumerate}
    \item $B$ is isomorphic to a quotient of a universal completion of a cyclic representation.
    \item $B$ has a strongly cyclic vector.
    \item $\norm{}$ is equivalent to a locally maximal norm with respect to some vector $v$.
    \item There exists $v\in B$ such that any map $T:B'\map B$ of Banach representations of $G$ such that $v$ lies in its image is surjective.
    \end{enumerate}

\subsection{Strongly irreducible Banach representations}

\begin{defn}\label{def_strong_irreducible}
Let $(B,\norm{})$ be a Banach representation of $G$.
We say that $B$ is strongly irreducible if any non-zero vector in $B$ is strongly cyclic. 
\end{defn}
Clearly, a strongly irreducible Banach representation is topologically irreducible.
The converse is not true (see the example at the end of this section).

\begin{prop}\label{prop_open_unit_ball_of_strongly_cyclic}
Let $(B,\norm{})$ be a Banach representation of $G$ and $v\in B$ a strongly cyclic vector.
Assume that $\norm{}$ is normalized and locally maximal at $v$.
Then, any $w\in B$ with $\norm{v-w}<1$ is also strongly cyclic.
\end{prop}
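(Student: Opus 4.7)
The plan is to invoke the characterization in Theorem \ref{thm_strongly_cyclic_spaces}, reducing everything to showing that $\norm{}$ is normalized and locally maximal at $w$. Normalization is immediate from the non-archimedean inequality: since $\norm{v-w} < 1 = \norm{v}$, one has $\norm{w} = \max(\norm{v},\norm{v-w}) = 1$.

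For local maximality, the idea is to build a contraction that tightly couples $v$ and $w$. Since $\norm{} = \norm{}_{v,B}$ (by applying Theorem \ref{thm_strongly_cyclic_spaces} to $v$), Lemma \ref{lem_locally_maximal_from_strongly_cyclic} supplies a summable family $(\mu_g)_{g\in G}\subset\C_p$ with $v-w = \sum_g \mu_g\, g(v)$ and $c := \max_g\abs{\mu_g}_p < 1$. I would then introduce the operator $A := \sum_g \mu_g\, g$ acting on $B$, check that it is a bounded operator of operator norm at most $c$ in every $G$-invariant norm on $B$, and form its Neumann inverse $T := \sum_{n\geq 0} A^n = (I-A)^{-1}$, whose operator norm is at most $1$. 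A direct computation gives $(I-A)(v) = v - (v-w) = w$, equivalently $v = T(w)$.

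Given any $\norm{}'\in \Norms(B)^G$ with $\norm{}'\dominated \norm{}$ and $\norm{w}' = 1$, this pair of identities pins down $\norm{v}'$: on the one hand $v = T(w)$ yields $\norm{v}' \leq \norm{w}' = 1$; on the other hand $w = v - A(v)$ together with $\norm{A(v)}' \leq c\norm{v}' < \norm{v}'$ and the ultrametric inequality forces $\norm{w}' \leq \norm{v}'$. Hence $\norm{v}' = 1 = \norm{v}$, and local maximality of $\norm{}$ at $v$ gives $\norm{}'\leq\norm{}$. Thus $\norm{}$ is locally maximal at $w$, and Theorem \ref{thm_strongly_cyclic_spaces} delivers the conclusion that $w$ is strongly cyclic.

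The delicate point that I expect to be the main obstacle is verifying that $A$, and hence its Neumann inverse $T$, is genuinely a bounded operator with the same norm bound with respect to \emph{every} invariant norm $\norm{}'\dominated\norm{}$, and that the series $\sum_g \mu_g\, g(x)$ converges to the same vector of $B$ in $\norm{}$ and in $\norm{}'$. This uses that $(\mu_g)$ is summable in $\C_p$, that $G$-invariance gives $\norm{g(x)}' = \norm{x}'$, and that convergence in $\norm{}$ forces convergence in $\norm{}'$ to the same limit via the domination constant. Once this technical coherence is in place, the comparison between $v$ and $w$ becomes entirely symmetric and the rest is a one-line consequence of the characterization theorem.
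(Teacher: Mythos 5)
Your proof is correct and follows the paper's argument: reduce via Theorem~\ref{thm_strongly_cyclic_spaces} to showing that $\norm{}$ is normalized and locally maximal at $w$, expand $v-w=\sum_g\lambda_g\, g(v)$ with $\max_g\abs{\lambda_g}_p<1$ by Lemma~\ref{lem_locally_maximal_from_strongly_cyclic}, and deduce $\norm{v}'=\norm{w}'=1$ from $G$-invariance and the ultrametric inequality, so that local maximality at $v$ gives $\norm{}'\leq\norm{}$. The Neumann inverse $T=(I-A)^{-1}$ you construct for the inequality $\norm{v}'\leq 1$ is an unnecessary detour, since the strong triangle inequality applied to $w=v-A(v)$ together with $\norm{A(v)}'<\norm{v}'$ already forces the \emph{equality} $\norm{w}'=\norm{v}'$, which is precisely the paper's one-line conclusion.
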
 
    \begin{proof}
    First, note that $\norm{}$ is equal to $\norm{}_{v,B}$ from Lemma \ref{lem_locally_maximal_from_strongly_cyclic}.
    Let $w\in B$ such that $\norm{v-w}<1$.
    By Theorem \ref{thm_strongly_cyclic_spaces}, it is enough to show that $\norm{}$ is normalized and locally maximal at $w$.
    That $\norm{}$ is normalized at $w$ follows from the strong triangle inequality.
    To show that $\norm{}$ is locally maximal at $w$, let $\norm{}'\in\Norms(V)^G$ a norm that is dominated by $\norm{}$ and normalized at $w$.
    By Lemma \ref{lem_locally_maximal_from_strongly_cyclic} we can write $v-w=\sum_{g\in G}\lambda_g\cdot g(v)$, where $(\lambda_g)_{g\in G}$ is summable and $\max_{g\in G}(\lambda_g)<1$.
    Therefore, if $\norm{w}'=1$, then also $\norm{v}'=1$.
    Since $\norm{}$ is locally maximal at $v$, $\norm{}'\leq \norm{}$.
    This prove that $\norm{}$ is also locally maximal at $w$.
    \end{proof} 

\begin{prop}
Let $(B,\norm{})$ be a Banach representation of $G$.
The set of all strongly cyclic vectors in $B$ is open (possibly empty).
\end{prop}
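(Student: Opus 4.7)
The plan is to bootstrap directly from Proposition \ref{prop_open_unit_ball_of_strongly_cyclic}, whose content is precisely that around a \emph{locally maximal} normalization of the norm at a strongly cyclic vector, an entire open ball of strongly cyclic vectors exists. The only subtlety is that our given norm $\norm{}$ on $B$ need not itself be normalized or locally maximal at the strongly cyclic vector we start with, so we must first replace it with an equivalent norm that is.

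More precisely, suppose $v \in B$ is strongly cyclic; I want to exhibit a $\norm{}$-open neighborhood of $v$ inside the set of strongly cyclic vectors. First I would invoke Lemma \ref{lem_locally_maximal_from_strongly_cyclic} to produce the norm $\norm{}_{v,B}$, which is normalized at $v$, locally maximal at $v$, and equivalent to $\norm{}$ on $B$. Then, applying Proposition \ref{prop_open_unit_ball_of_strongly_cyclic} to the Banach representation $(B,\norm{}_{v,B})$, every $w \in B$ with $\norm{v - w}_{v,B} < 1$ is strongly cyclic in $B$. Note that the notion of being strongly cyclic depends only on the vector and the underlying topological $G$-representation, not on the particular choice of norm within an equivalence class, so this conclusion is genuinely about strong cyclicity in $(B,\norm{})$.

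Finally, since $\norm{}$ and $\norm{}_{v,B}$ are equivalent, they define the same topology on $B$, so the set $\{w \in B : \norm{v - w}_{v,B} < 1\}$ is an open neighborhood of $v$ in the original topology of $(B,\norm{})$. This shows the set of strongly cyclic vectors is open around every one of its points, proving the proposition. There is no real obstacle: the heavy lifting has already been done in Lemma \ref{lem_locally_maximal_from_strongly_cyclic} and Proposition \ref{prop_open_unit_ball_of_strongly_cyclic}, and the only thing to verify is the (formal) point that strong cyclicity is a property of the topological representation rather than of the particular invariant norm chosen.
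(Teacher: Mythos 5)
Your proof is correct and follows the paper's own argument almost verbatim: invoke Lemma \ref{lem_locally_maximal_from_strongly_cyclic} to get the equivalent, locally maximal norm $\norm{}_{v,B}$, apply Proposition \ref{prop_open_unit_ball_of_strongly_cyclic} to get an open ball of strongly cyclic vectors, and conclude by the equivalence of the topologies. Your added remark that strong cyclicity is a property of the underlying topological $G$-representation rather than of the chosen invariant norm is a worthwhile clarification that the paper leaves implicit.
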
 
    \begin{proof}
    Assume that the set of strongly cyclic vectors in $B$ is not empty.
    Let $v$ be a strongly cyclic vector in $B$.
    By the previous proposition, all the vectors in the open unit ball around $v$ with respect to $\norm{}_{v,B}$ are strongly cyclic.
    By Lemma \ref{lem_locally_maximal_from_strongly_cyclic}, the open unit ball of $\norm{}_{v,B}$ is open in $B$.
    \end{proof} 
    
\begin{cor}\label{cor_proper_subspaces}
Let $(B,\norm{})$ be a Banach representation of $G$ and assume that $B$ contains a strongly cyclic vector.
If $B$ is not strongly irreducible, $B$ contains a non-zero proper closed sub-representation.
\end{cor}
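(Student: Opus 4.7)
My plan is to realise the required proper sub-representation as $N:=\overline{\C_p[G]\cdot w}$, the closure of the $\C_p[G]$-submodule generated by some non-zero vector $w\in B$ that is not strongly cyclic; such a $w$ exists because $B$ is assumed not to be strongly irreducible. Since $w\neq 0$, $N$ is automatically a non-zero closed sub-representation of $B$, so the entire content of the corollary is to show $N\subsetneq B$.

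I will argue by contradiction. Suppose $N=B$, so that $\C_p[G]\cdot w$ is dense in $B$. The preceding proposition asserts that the set $S$ of strongly cyclic vectors of $B$ is open, and by hypothesis $S$ is non-empty, so any dense subset meets it. Thus there is a finite combination
\[
w' \;=\; \sum_{g\in T} c_g\cdot g(w) \;\in\; \C_p[G]\cdot w \,\cap\, S,
\]
with $T\subset G$ finite and $c_g\in\C_p$, that is itself strongly cyclic in $B$.

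The main step is then to bootstrap the strong cyclicity of $w'$ back onto $w$. For any $b\in B$, strong cyclicity of $w'$ produces a summable sequence $(\mu_h)_{h\in G}\subset\C_p$ with $b=\sum_h \mu_h\cdot h(w')$. Substituting the finite expansion of $w'$, interchanging the outer summable sum with the inner finite one, and reindexing via $k=hg$ yields
\[
b \;=\; \sum_{k\in G} \nu_k\cdot k(w),\qquad \nu_k \;:=\; \sum_{g\in T} c_g\,\mu_{kg^{-1}}.
\]
Because $T$ is finite and each shifted sequence $(\mu_{kg^{-1}})_k$ is a reindexing of the summable family $(\mu_h)_h$, the sequence $(\nu_k)$ is itself summable. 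Thus $w$ is strongly cyclic in $B$, contradicting its choice, so the assumption $N=B$ fails and $N$ is the desired proper closed sub-representation.

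The only delicate point I foresee is the rearrangement of the double sum inside $B$. It is legitimised by observing that the two-variable family $(\mu_h c_g\cdot(hg)(w))_{(h,g)\in G\times T}$ is summable in $B$: the $B$-norm of each term factors as $|\mu_h||c_g|\cdot\norm{w}$, and since $T$ is finite and $(\mu_h)$ is summable, only finitely many pairs contribute above any prescribed $\epsilon>0$. A standard non-archimedean Fubini-type argument then justifies both the change of order and the reindexing, closing the loop.
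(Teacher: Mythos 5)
Your proof is correct and follows the same basic approach as the paper: take a non-strongly-cyclic vector $w\neq 0$, set $W = \C_p[G]\cdot w$, and use the openness of the set $U$ of strongly cyclic vectors (from the preceding proposition) to conclude that $\overline{W}$ is a proper non-zero closed sub-representation. The difference is that the paper's own proof is terser: it simply asserts $W\subset B\backslash U$ --- that is, no vector in the algebraic span of a non-strongly-cyclic $w$ can itself be strongly cyclic --- and then observes that since $B\backslash U$ is closed and $U$ is non-empty, $\overline{W}\subset B\backslash U\subsetneq B$. Your rearrangement (Fubini-type) argument supplies precisely the justification of that assertion in contrapositive form: if some finite combination $w'=\sum_{g\in T}c_g\,g(w)$ were strongly cyclic, then regrouping the summable expansion of any $b\in B$ in translates of $w'$ yields a summable expansion in translates of $w$, making $w$ strongly cyclic after all. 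So you are taking the same route but filling in a step the paper leaves implicit; the paper's version is more direct (it never needs the contradiction hypothesis $\overline{W}=B$), while yours makes the underlying mechanism explicit.
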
 
    \begin{proof}
    Let $U$ be the open subset of strongly cyclic vectors in $B$.
    By assumption, $U$ is not empty.
    Assume that $B$ is not strongly irreducible and let $0\neq w\in B$ a non strongly cyclic vectors.
    Denote by $W$ the algebraic representation generated by $w$ in $B$, then $W\subset B\backslash U$.
    Thus, the closure of $W$ is a proper non-zero closed sub-representation of $B$.
    \end{proof}

\begin{thm}\label{Thm_strong_irreducibility}
Let $(B,\norm{})$ be a Banach representation of $G$.
The following are equivalent.
    \begin{enumerate}
    \item $B$ is strongly irreducible.
    \item $B$ is topologically irreducible and there exists a strongly cyclic vector in $B$.
    \item $B$ is topologically irreducible and there exists a locally maximal norm at some vector $0\neq v\in B$ on $B$, equivalent to $\norm{}$.
    \item Any non-zero $G$-equivariant bounded map $B'\map B$, where $(B',\norm{}')$ is a Banach representation of $G$, is surjective.
    \item $(B,\norm{})$ is isomorphic to a quotient of a universal completion of a cyclic representation of $G$ by a maximal sub-representation.
    \end{enumerate}
\end{thm}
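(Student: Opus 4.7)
The plan is to prove $(1) \Rightarrow (2) \Rightarrow (3) \Rightarrow (1)$ in a short cycle, and then to close the web of equivalences by establishing $(1) \Leftrightarrow (4)$ and $(1) \Leftrightarrow (5)$ separately. The essential input throughout is Theorem \ref{thm_strongly_cyclic_spaces}, which reconciles the three pictures of strongly cyclic vectors, locally maximal norms, and quotients of universal completions; every implication below reduces to feeding one of these pictures into a lemma already proved in this section.

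For the cycle, $(1) \Rightarrow (2)$ is essentially tautological: pick any non-zero $v \in B$; it is strongly cyclic by hypothesis, and so the algebraic sub-representation it generates is dense in $B$, whence any non-zero closed sub-representation must equal $B$. The implication $(2) \Rightarrow (3)$ is immediate from Lemma \ref{lem_locally_maximal_from_strongly_cyclic}, which produces a locally maximal norm $\norm{}_{v,B}$ equivalent to $\norm{}$ at any strongly cyclic vector $v$. For $(3) \Rightarrow (1)$, Theorem \ref{thm_strongly_cyclic_spaces} applied to the equivalent locally maximal norm furnishes a strongly cyclic vector in $B$ (being strongly cyclic depends only on the topology, which is invariant within an equivalence class of norms), and topological irreducibility combined with Corollary \ref{cor_proper_subspaces} then forces every non-zero vector to be strongly cyclic.

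For $(1) \Leftrightarrow (4)$, I would invoke the two observations recorded after Definition \ref{def_srongly_cyclic_vector}. Assuming $(1)$, a non-zero bounded equivariant $T : B' \to B$ has a non-zero vector in its image, which is strongly cyclic by hypothesis, so $T$ is surjective. Conversely, under $(4)$, for any $0 \neq w \in B$ apply $(4)$ to the canonical map $\Ucomp{W} \to B$, where $W$ is the algebraic sub-representation generated by $w$; this map is non-zero, hence surjective, and so $w$, being the image of the cyclic vector of $W$, is strongly cyclic in $B$. For $(1) \Leftrightarrow (5)$: assuming $(1)$, fix any $0 \neq v \in B$ and let $V$ be the sub-representation it generates; Theorem \ref{thm_strongly_cyclic_spaces}(2) presents $B$ isometrically as $\Ucomp{V}/K$, and $K$ must be maximal among closed sub-representations, since any intermediate $K \subsetneq K' \subsetneq \Ucomp{V}$ would descend to a non-zero proper closed sub-representation of $B$, contradicting the topological irreducibility derived in $(1) \Rightarrow (2)$. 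Conversely, writing $B = \Ucomp{V}/K$ with $K$ maximal, the image of the cyclic vector of $V$ in $B$ is strongly cyclic in $B$, and maximality of $K$ translates directly into topological irreducibility of $B$, so we land in $(2)$ and hence $(1)$.

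I expect no serious obstacle. The only mild subtlety is in $(3) \Rightarrow (1)$, where one must first observe that the property of being strongly cyclic is invariant under replacing the norm by an equivalent one before invoking Theorem \ref{thm_strongly_cyclic_spaces} and Corollary \ref{cor_proper_subspaces}. A similarly trivial check in $(5) \Rightarrow (1)$ is that the cyclic vector of $V$ has non-zero image in $B$: otherwise $K$ would contain $V$, and since $V$ is dense in $\Ucomp{V}$ and $K$ is closed, we would get $K = \Ucomp{V}$, contradicting the properness of $K$.
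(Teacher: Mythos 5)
Your proof is correct and follows essentially the same route as the paper: both rely on Theorem \ref{thm_strongly_cyclic_spaces}, Lemma \ref{lem_locally_maximal_from_strongly_cyclic}, Corollary \ref{cor_proper_subspaces}, and the two observations after Definition \ref{def_srongly_cyclic_vector}. The only cosmetic difference is in which arrows you choose to prove directly (a $(1)\Rightarrow(2)\Rightarrow(3)\Rightarrow(1)$ cycle and $(1)\Leftrightarrow(5)$, whereas the paper uses $(2)\Leftrightarrow(3)$ and $(2)\Leftrightarrow(5)$), but the underlying arguments are the same.
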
 
    \begin{proof}
    The implication $(1)\Rightarrow (2)$ is trivial and the implication $(2)\Rightarrow(1)$ follows from Corollary \ref{cor_proper_subspaces}.
    The equivalence $(2)\iff (3)$ follows from Theorem \ref{thm_strongly_cyclic_spaces}.
    Next we show $(1)\iff(4)$.
    Assume $(1)$. 
    Let $(B',\norm{}')$ be a Banach representation of $G$ and $T:B'\map B$ a non-zero $G$-equivariant bounded map.
    Let $0\neq v\in Im(T)$.
    Then $v$ is strongly cyclic and by a previous observation, $T$ is surjective.
    Assume $(4)$.
    Let $v\in B$ non-zero.
    Let $V$ be the algebraic representation generated by $v$ in $B$.
    The map $I:\Ucomp{V}\map B$ is a non-zero $G$-equivariant bounded map, so by assumption $I$ is surjective.
    Thus, by a previous observation, $v$ is strongly cyclic in $B$.
    Finally, we prove $(2)\iff (5)$.
    Assume $(2)$.
    Let $v\in B$ be a strongly cyclic vector, let $V$ be the algebraic representation generated by $v$ and $I:\Ucomp{V}\map B$.
    Since $v$ is strongly cyclic, $I$ is surjective.
    If $W\subset\Ucomp{V}$ denotes the kernel of $I$, then $\Ucomp{V}/W$ is isomorphic to $B$.
    Since $B$ is topologically irreducible, $W$ is a maximal closed sub-representation.
    Assume $(5)$.
    As a quotient of a universal completion of a cyclic representation by a maximal sub-representation, $B$ is topologically irreducible and contains a strongly cyclic vector. 
    By Corollary \ref{cor_proper_subspaces}, $B$ is strongly cyclic.
    \end{proof} 

\begin{prop}\label{prop_small_representations}
Let $(B_i,\norm{}_i)$, for $i=1,...,n$, be pairwise non-isomorphic strongly irreducible Banach representations of $G$. 
Let $B$ be the Banach representation $B=\bigoplus_{i=1}^nB_i$, equipped with the norm $\max\sog{\norm{}_1,...,\norm{}_n}$.
Then any $x=(x_1,...,x_n)\in B$ such that $x_i\neq 0$ for all $i$ is strongly cyclic in $B$.
\end{prop}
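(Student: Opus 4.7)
I would prove the statement by induction on $n$, the base case $n=1$ being immediate from the strong irreducibility of $B_1$. For the inductive step, set $V = \C_p[G]\cdot x$ and consider the canonical map $I:\widehat{V} \to B$ from the universal completion. By the second bulleted observation following Definition \ref{def_srongly_cyclic_vector} (applied to the strongly cyclic canonical generator of $\widehat{V}$), showing that $x$ is strongly cyclic in $B$ reduces to showing that $I$ is surjective.

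To exploit the decomposition $B = (\bigoplus_{i<n}B_i) \oplus B_n$, I would consider the two continuous $G$-equivariant maps $\pi_{<n}:\widehat{V} \to \bigoplus_{i<n}B_i$ and $\pi_n:\widehat{V}\to B_n$ induced by $I$. Both are surjective: $\pi_n$ because its image contains $x_n\neq 0$ and $B_n$ is strongly irreducible (characterization (4) of Theorem \ref{Thm_strong_irreducibility}); $\pi_{<n}$ because its image contains $(x_1,\ldots,x_{n-1})$, which is strongly cyclic in $\bigoplus_{i<n}B_i$ by the inductive hypothesis (applied to the pairwise non-isomorphic strongly irreducible $B_1,\ldots,B_{n-1}$), followed by the first bulleted observation after Definition \ref{def_srongly_cyclic_vector}. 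To splice these two surjections into surjectivity of $I$, it suffices to show that $\pi_n|_{\ker\pi_{<n}}:\ker\pi_{<n} \to B_n$ is surjective: given $(b',b_n)\in B$, pick a preimage $z_0$ of $b'$ under $\pi_{<n}$, then add to $z_0$ a preimage in $\ker\pi_{<n}$ of $b_n-\pi_n(z_0)$. Since $\pi_n|_{\ker\pi_{<n}}$ is a continuous $G$-equivariant map into the strongly irreducible $B_n$, characterization (4) of Theorem \ref{Thm_strong_irreducibility} again reduces this to showing that it is nonzero.

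The main obstacle is ruling out the case $\pi_n|_{\ker\pi_{<n}}=0$, i.e.\ $\ker\pi_{<n} \subset \ker\pi_n$. If this held, $\pi_n$ would factor through $\pi_{<n}$; applying the open mapping theorem to the surjection $\pi_{<n}$ between Banach spaces identifies $\widehat{V}/\ker\pi_{<n}$ topologically with $\bigoplus_{i<n}B_i$, and so yields a continuous $G$-equivariant map $\psi:\bigoplus_{i<n}B_i \to B_n$ with $\psi(x_1,\ldots,x_{n-1})=x_n\neq 0$. Because the domain is a finite direct sum, $\psi$ cannot vanish on every summand, so $\psi|_{B_j}\neq 0$ for some $j<n$. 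Such a map is a nonzero continuous $G$-equivariant map between two strongly irreducible Banach representations, hence a topological isomorphism (injective by topological irreducibility of $B_j$, surjective by characterization (4) of Theorem \ref{Thm_strong_irreducibility} for $B_n$, and open by the open mapping theorem), contradicting the non-isomorphism of $B_j$ and $B_n$. This contradiction completes the induction.
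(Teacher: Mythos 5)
Your proof is correct and follows essentially the same strategy as the paper's: induct on $n$, project onto two complementary blocks of the direct sum, show both projections are surjective, and derive a contradiction from a kernel containment by producing a nonzero (hence, by strong irreducibility, an isomorphism) between non-isomorphic summands. The only differences are cosmetic: you split off $B_n$ rather than $B_1$, and you work directly with the canonical map $I:\Ucomp{V}\to B$ and a splicing argument, whereas the paper works with an arbitrary $T:B'\to B$ and shows each $B_i\subset \mathrm{Im}(T)$; these are equivalent via Theorem \ref{thm_strongly_cyclic_spaces}.
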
 
    \begin{proof}
    By induction on $n$.
    The case $n=1$ is trivial.
    Assume that $n>1$ and that the claim holds for any $1\leq k<n$.
    By Theorem \ref{thm_strongly_cyclic_spaces}, it is enough to show that for any Banach representation $(B',\norm{}')$ of $G$ and any continuous map of representations $T:B'\map B$, if $x$ lies in the image of $T$ then $T$ is surjective.
    Let $T:B'\map B$ be such a map.
    Denote by $P_1:B\map B_1$ and $P_2:B\map \bigoplus_{i=2}^nB_i$ the projections.
    Since $x_1$ lies in the image of $P_1\circ T$ and $(x_2,...,x_n)$ lies in the image of $P_2\circ T$, it follows from the induction hypothesis that both $P_1\circ T$ and $P_2\circ T$ are surjective.
    Let $K_1,K_2$ be the kernels of $P_1\circ T$ and $P_2\circ T$ respectively.
    Then $K_2$ is not contained in $K_1$, for otherwise we would have a non-zero map $\bigoplus_{i=2}^nB_i\map B_1$.
    Such a map would give a non-zero map between one of the $B_i$, for $i\geq 2$, and $B_1$.
    Since both $B_1$ and $B_i$ are strongly irreducible, such a map must be an isomorphism, contradicting the hypothesis.
    Therefore, the restriction of $P_1\circ T$ to $K_2$ is a non-zero map, hence surjective since $B_1$ is strongly irreducible.
    It follows that $B_1$ is contained in the image of $T$.
    Similarly, for any $1\leq i\leq n$, $B_i$ is contained in the image of $T$.
    Thus, $T$ is surjective.
    \end{proof} 
    
We end this section with an example of a topologically irreducible Banach representation which is not strongly irreducible.
    \begin{exmp}
    Let $C(\Z_p)$ be the space of continuous functions on $\Z_p$ with values in $\C_p$, equipped with the sup norm $\supnorm{}$.
    We choose $\zeta\in \C_p$, not a root of unity, such that $\abs{\zeta-1}_p<1$ and denote by $G$ the group generated by translations and by multiplications by $\zeta^{nx}$, $n\in\Z$.
    The sup norm is invariant under the action of $G$, so $C(\Z_p)$ is a Banach representation of $G$.
    
    As a representation of $G$, $C(\Z_p)$ is topologically irreducible, as we now show.
    Let $A$ be the linear span of the functions $\zeta^{nx}$, $n\in\Z$.
    Then $A$ is an algebra in $C(\Z_p)$ that separates points and contains the constant functions.
    By the $p$-adic Stone-Weierstrass theorem (\cite{kaplansky1995weierstrass}), $A$ is dense in $C(\Z_p)$.
    Let $0\neq f\in C(\Z_p)$ and $W$ be the closed sub-representation generated by $f$.
    We will show that $W=C(\Z_p)$.
    Applying translations, $W$ contains a nowhere vanishing functions $g(x)$.
    Then $A\cdot g\subset W$ is dense in $C(\Z_p)$, so $W=C(\Z_p)$.
    
    We show that the constant function $\textbf{1}(x)$ is not a strongly cyclic vector, thus $C(\Z_p)$ is not strongly irreducible.
    Let $f(x)\in C(\Z_p)$ and assume that it can be written as
    \[f(x)=\sum_{n\in\Z}\lambda_n\cdot \zeta^{nx},\]
    where $\limit{n}\lambda_n=0$.
    The Mahler expansion of $f(x)$ is 
        \begin{align*}
        f(x)=
        \sum_{n\in\Z}\lambda_n\cdot (\zeta^{n})^x
        =\sum_{n\in\Z}\lambda_n\cdot \sum_{k=0}^\infty (\zeta^n-1)^k\cdot\binom{x}{k}
        =\sum_{k=0}^\infty \sog{\sum_{n\in\Z}\lambda_n\cdot (\zeta^n-1)^k}\cdot\binom{x}{k}
        =\sum_{k=0}^\infty b_k\cdot\binom{x}{k}.
        \end{align*} 
    There exists $0<\eps<1$ such that $\pabs{\zeta^n-1}<\eps$ for all $n\in\Z$.
    Let $m=\max_{n\in\Z}\pabs{\lambda_n}$.
    Then the coefficients $(b_k)_{k=1}^\infty$ obey the asymptotic formula
    \[\pabs{b_k}\leq m\cdot \eps^k.\]
    
    In particular, the function $f\in C(\Z_p)$ with Mahler expansion $f(x)=\sum_{k=0}^\infty p^k\cdot\binom{x}{p^k}$ is not of the form $\sum_{n\in\Z}\lambda_n\cdot\zeta^{nx}$.
      
    \end{exmp} 
\section{The main results}
In this section and for the rest of this paper all the representations are assumed to be over $\C_p$.
Fix a non-trivial smooth character $\psi:(\Q_p,+)\map \C_p^\times$, and let $(\rho_\psi,\schw)$ be the Schrödinger representation of the Heisenberg group $\heis=\heis_{2d+1}(\Q_p)$.
In particular, the functions in $\schw=\schw(\Q_p^d)$ are valued in $\C_p$.
The action of $\heis$ on $\schw$ is generated by translations and multiplication by smooth characters.
An $\heis$-invariant norm on $\schw$ is therefore a norm $\norm{}$ on $\schw$ such that
\[\norm{f(x+a)}=\norm{f(x)},\ \ \ \norm{\chi(x)\cdot f(x)}=\norm{f(x)}\]
 for any $f\in\schw$, any $a\in\Q_p$ and any smooth character $\chi$ of $\Q_p^d$.

Our main results concern a family of $\heis$-invariant norms on $\schw$ with a surprising rigidity.
This family is the orbit of the sup norm by intertwining operators.
In the first sub-section we define these norms and show that they are parameterized by a Grassmannian.
In the second sub-section we state the main results.
The proofs are given in the next sections.

\subsection{A special family of $\mathcal{H}$-invariant norms parameterized by a Grassmannian}
Let $g=\twomat{a}{b}{c}{d}$ be a matrix in the symplectic group $\mathrm{Sp}_{2d}(\Q_p)$ and choose $T_g$, a corresponding intertwining operator. 
If $\norm{}$ is an $\heis$-invariant norm on $\schw$, the norm $f\mapsto \norm{T_g(f)}$ is also $\heis$-invariant.
Indeed, 
\[\norm{T_g([w,t]f)}=\norm{[wg,t]T_g(f)}=\norm{T_g(f)}.\]
As the $T_g$ are determined up to a constant, this defines a right action of $\mathrm{Sp}_{2d}(\Q_p)$ on the space $\invHomothety$ of homothety classes of $\heis$-invariant norm on $\schw$.
If $x\in\invHomothety$ denotes the homothety class of the norm $\norm{}$, then we denote by $xg$ the homothety class of the norm $\norm{T_g(\cdot )}$.
Then $(xg_1)g_2=x(g_1g_2)$ are both equal to the homothety class of the norm $\norm{T_{g_1}(T_{g_2}(\cdot))}$.

An important example of an $\heis$-invariant norm on $\schw$ is the sup norm:
\[\supnorm{f}=\sup_{x\in \Q_p^d}\ \abs{f(x)}_p.\]
In the following proposition we determine the stabilizer in $\mathrm{Sp}_{2d}(\Q_p)$ of the homothety class of the sup norm.

\begin{prop}\label{prop_norms_Grassmannian}
Let $g=\twomat{a}{b}{c}{d}$ be a matrix in the symplectic group $\mathrm{Sp}_{2d}(\Q_p)$ and $T_g$ a corresponding intertwining operator.
The norms $\supnorm{}$ and $\supnorm{T_g(\cdot)}$ are homothetic if and only if they are equivalent, if and only if $c=0$.
\end{prop}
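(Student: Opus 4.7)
The proof splits into three implications: homothetic $\Rightarrow$ equivalent is immediate from the definitions, so only $c=0\Rightarrow$ homothetic and (the contrapositive) $c\neq 0 \Rightarrow$ not equivalent require argument.

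For $c=0\Rightarrow$ homothetic, the plan is to read off the formula in Proposition \ref{prop_intertwining_formula}. When $c=0$, the symplectic relation $gJg^t=J$ forces $a\in GL_d(\Q_p)$ (with $d=(a^t)^{-1}$ and $ab^t$ symmetric), and $Im(c)=0$, so the integral degenerates to a point mass at $y=0$, giving $T_g(f)(x)=\psi\sog{\tfrac{1}{2}(xa)\dotprod(xb)}\cdot f(xa)$. Since $\psi$ is a nontrivial smooth character of $(\Q_p,+)$, its image lies in the group of $p$-power roots of unity and so $\pabs{\psi(\cdot)}=1$; since $x\mapsto xa$ is a bijection of $\Q_p^d$, one concludes $\supnorm{T_g(f)}=\supnorm{f}$ for every $f\in\schw$, which is in fact equality of norms (homothety with ratio $1$).

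The substantive content is the converse, which I would prove by contrapositive. Assuming $c\neq 0$, put $V:=Im(c)$, a nonzero subspace of $\Q_p^d$, and exhibit a sequence $(f_n)\subset\schw$ with $\supnorm{f_n}=1$ and $\supnorm{T_g(f_n)}\to\infty$. A natural candidate is $f_n=\textbf{1}_{p^{-n}L}$ for a suitably chosen lattice $L\subset \Q_p^d$. Substituting into Proposition \ref{prop_intertwining_formula}, for $x$ in a sufficiently small neighborhood of $0$ the indicator restricts the integration to $y\in p^{-n}L\cap V$, and one can shrink this neighborhood further so the quadratic $\psi$-phase in the exponent is identically $1$ on the resulting region. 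The value of $T_g(f_n)(x)$ is then the Haar volume of $p^{-n}L\cap V$ in $V$, which grows like $p^{n\dim V}$ up to a constant depending only on the fixed Haar distribution $d\mu$ of Proposition \ref{prop_intertwining_formula}. Hence $\supnorm{T_g(f_n)}\to\infty$ while $\supnorm{f_n}=1$, so the norms are not equivalent.

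The main obstacle is ensuring that the $\psi$-phases do not cause oscillatory cancellation in the key integral; this is precisely what the double shrinking of the $x$-neighborhood above is designed to avoid, and it must be justified with some care. The cleanest workaround I see is to first apply a Bruhat-type decomposition $g=p_1\cdot w\cdot p_2$ with $p_1,p_2$ in the Siegel parabolic $P$ (the $c=0$ block) and $w$ a Weyl-type representative effecting a partial Fourier transform in the block corresponding to $V$. Since $T_{p_1}$ and $T_{p_2}$ preserve the sup norm by the $c=0$ case already handled, this reduces non-equivalence for $g$ to non-equivalence for $w$, whose intertwiner is essentially a partial Fourier transform and for which the volume calculation above is a direct generalization of the one-dimensional computation that the Fourier transform of $\textbf{1}_{p^{-n}\Z_p}$ has sup norm of order $p^n$.
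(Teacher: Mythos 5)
Your high-level strategy is right, and the $c=0$ direction is fine (modulo the harmless omission of the normalization constant coming from the Haar distribution on $\{0\}$ — the conclusion is homothety, not necessarily isometry). The two genuine problems are in the $c\neq 0$ case.

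First, the direct approach with $f_n=\textbf{1}_{p^{-n}L}$ cannot be repaired by shrinking the $x$-neighborhood. The quadratic phase $\psi\sog{\tfrac{1}{2}y\dotprod(yd)}$ in Proposition \ref{prop_intertwining_formula} depends only on $y$, and for growing $f_n$ the integration domain $p^{-n}L\cap V$ in $y$ grows; no restriction on $x$ can trivialize a $y$-dependent phase on a growing region. The paper sidesteps this entirely by going the opposite way: it takes \emph{shrinking} test functions $f_n=\textbf{1}_{U_n}$ with $U_n$ a $p^n$-dilation of a fixed compact open set, evaluates $T_g(f_n)$ at $x=0$ so the cross term $-(xb)\dotprod y$ vanishes, and observes that for $n$ large the remaining phase $\tfrac{1}{2}y\dotprod(yd)$ is trivial on the shrinking set $V_n$, so $T_g(f_n)(0)=\mu(V_n)=p^{-nk}$.

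Second, and more seriously, the claimed asymptotics are backwards. In the $\C_p$-valued setting the Haar volume of $p^{-n}L\cap V$ is on the order of $p^{nk}$ as an element of $\C_p$, but $\pabs{p^{nk}}=p^{-nk}\to 0$; so your $f_n$ would have $\supnorm{T_g(f_n)}\to 0$, not $\to\infty$. The same error appears in your one-dimensional sanity check: the Fourier transform of $\textbf{1}_{p^{-n}\Z_p}$ is $p^n\cdot\textbf{1}_{p^n\Z_p}$ and has sup norm $\pabs{p^n}=p^{-n}$, not $p^n$ — the archimedean intuition that large bumps have large transforms reverses under $\abs{}_p$. (Either direction of failure of domination suffices for non-equivalence, so this is a fixable sign error rather than a conceptual dead end, but as written the reasoning is wrong.) The paper's shrinking bumps give $T_g(f_n)(0)=p^{-nk}$ with $\pabs{p^{-nk}}=p^{nk}\to\infty$, which is both correct and immediate. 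Your Bruhat reduction $g=p_1wp_2$ with $p_1,p_2$ in the Siegel parabolic is a valid alternative route — it does cleanly isolate a partial Fourier transform and eliminates the quadratic phase — but it is more machinery than the paper needs, and in any case it must be combined with the shrinking test functions and the correct $p$-adic size estimate.
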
 
    \begin{proof}
    If $c=0$, Proposition \ref{prop_intertwining_formula} says that there exists $\lambda\in\C_p^\times$ such that
    \[T_g(f)(x)=\lambda\cdot \psi\sog{\frac{1}{2}(xa)\dotprod(xb)}\cdot f(xa).\]
    As $a$ must be invertible, $\supnorm{f(xa)}=\abs{\lambda}_p\cdot\supnorm{f(x)}$.
    Thus, $\supnorm{}$ and $\supnorm{T_g(\cdot)}$ are homothetic and therefore equivalent.
    
    Assume that $c\neq 0$ and let $k\geq 1$ be the dimension of $Im(c)$.
    Recall that $c$ acts on $\Q_p^d$ by $v\mapsto v\cdot c$.
    Choose a basis $v_1,..,v_k$ of $Im(c)$ and complete it to a basis $v_1,...,v_k,v_{k+1},...,v_d$ of $\Q_p^d$.
    Let $U_n$ and $V_n$ be the compact open sets in $\Q_p^d$ and in $Im(c)$ respectively, given by
    \[U_n=\braces{\sum_{i=1}^d\lambda_iv_i\ |\ \lambda_1,...,\lambda_d\in p^n\Z_p},\ \ \ 
    V_n=\braces{\sum_{i=1}^k\lambda_iv_i\ |\ \lambda_1,...,\lambda_k\in p^n\Z_p}.\]
    Denote by $f_n(x)$ the characteristic function of $U_n$.
    Note that $f_n(x)\in\schw$.
    By Proposition \ref{prop_intertwining_formula}, there exists a Haar distribution $d\mu$ on $Im(c)$ such that
    \[T_g(f_n)(x)=\intop_{Im(c)}\psi\sog{\frac{1}{2}(xa)\dotprod(xb)-(xb)\dotprod y+\frac{1}{2}y\dotprod(yd)}\cdot f_n(xa+y)\ d\mu(y).\]
    We may assume that $\mu(V_0)=1$.
    Substituting $x=0$, we obtain
        \begin{align*}
        T_g(f_n)(0)
        =\intop_{Im(c)}\psi\sog{\frac{1}{2}y\dotprod(yd)}\cdot f_n(y)\ d\mu(y)
        =\intop_{V_n}\psi\sog{\frac{1}{2}y\dotprod(yd)}\ d\mu(y).
        \end{align*} 
    When $n$ is sufficiently large, $\frac{1}{2}y\dotprod (yd)\in \ker(\psi)$ for any $y\in V_n$, so
    \[T_g(f_n)(0)=\intop_{V_n}1\ d\mu(y)=p^{-nk}.\]
    Thus, $\limit{n}\supnorm{T_g(f_n)}=\infty$, whereas $\supnorm{f_n}=1$ for any $n$.
    Then $\norm{}_g$ and $\supnorm{}$ are not equivalent and therefore not homothetic.
    \end{proof} 

Let $P$ be the Siegel parabolic subgroup 
\[P=\braces*{\twomat{a}{b}{0}{d}\in \mathrm{Sp}_{2d}(\Q_p)},\]
and denote $\Grassmannian=P\backslash \mathrm{Sp}_{2d}(\Q_p)$.
Then $\Grassmannian$ is the Grassmannian of maximal isotropic subspaces of $(W,\omega)$.
\begin{defn}
We denote the point that corresponds to $P$ in $\Grassmannian$ by $\infty$.
For any $\alpha=Pg\in \Grassmannian$ we denote by $\norm{}_\alpha$ the unique $\heis$-invariant norm in the homothety class of $\supnorm{T_g(\cdot)}$ that is normalize at $\textbf{1}_{\Z_p^d}(x)$.
\end{defn} 

\subsection{The main results}
Our deepest results are Theorems \ref{thm_strong_minimality} and \ref{thm_strong_minimality_Z_p} below.
\begin{thm}[Rigidity]\label{thm_strong_minimality}
Let $\alpha\in\Grassmannian$.
If $\norm{}\in \invNorms$ is an $\heis$-invariant norm on $\schw$ that is dominated by $\norm{}_\alpha$, then $\norm{}=r\cdot \norm{}_\alpha$ for some constant $r>0$.    
\end{thm}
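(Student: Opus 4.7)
The plan is as follows.

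\textbf{Step 1 (Reduction to $\alpha=\infty$).} For $\alpha = Pg$ the norm $\norm{}_\alpha$ is homothetic to $\supnorm{T_g(\cdot)}$, where $T_g$ is an intertwining operator for $g\in \mathrm{Sp}_{2d}(\Q_p)$. Since $T_g:\schw\to\schw$ is a bijective intertwiner between $\rho_\psi$ and its $g$-twist, pullback along $T_g^{-1}$ defines a bijection on $\Norms(\schw)^\heis$ that preserves the dominance relation $\dominated$ and sends the homothety class of $\norm{}_\alpha$ to that of $\supnorm{\cdot}$. It therefore suffices to prove the statement for $\alpha=\infty$: every $\heis$-invariant norm on $\schw$ dominated by $\supnorm{\cdot}$ is a positive scalar multiple of $\supnorm{\cdot}$.

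\textbf{Step 2 (Normalize at $v_0:=\textbf{1}_{\Z_p^d}$).} Given such a norm $\norm{}$, set $c=\norm{v_0}>0$ and consider $\norm{}':=c^{-1}\norm{}$. Then $\norm{}'$ is $\heis$-invariant, normalized at $v_0$ (that is, $\norm{v_0}'=1=\supnorm{v_0}$), and still satisfies $\norm{}'\dominated\supnorm{\cdot}$. The theorem reduces to showing $\norm{}'=\supnorm{\cdot}$, which I will derive by combining two separate properties of $\supnorm{\cdot}$ at $v_0$: \emph{local maximality at $v_0$} in the sense of Definition \ref{def_local_maximality} — every $\heis$-invariant norm normalized at $v_0$ and dominated by $\supnorm{\cdot}$ is pointwise $\leq \supnorm{\cdot}$ — and \emph{weak minimality at $v_0$} — every $\heis$-invariant norm on $\schw$ satisfying $\norm{}\leq\supnorm{\cdot}$ and $\norm{v_0}=1$ is equal to $\supnorm{\cdot}$. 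Local maximality applied to $\norm{}'$ yields $\norm{}'\leq\supnorm{\cdot}$, after which weak minimality upgrades this to equality, giving $\norm{}=c\cdot\supnorm{\cdot}$ as desired.

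\textbf{Step 3 (Establishing the two properties).} Weak minimality should be the easier ingredient, following the template of Proposition \ref{prop_weak_minimality}: reduce the closed unit ball of $\supnorm{\cdot}$ modulo the maximal ideal of $\mathcal{O}_{\C_p}$ to the smooth $\heis$-representation $\schw(\Q_p^d,\overline{\fld{F}}_p)$, and verify that every nonzero sub-representation contains the image of $v_0$; this should reduce to averaging a nonzero locally constant compactly supported function over translations by a sufficiently large open compact subgroup of $\Q_p^d$, using that $\heis$ acts by translations and by multiplication by smooth characters.

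\textbf{Main obstacle.} The hard step is local maximality. Unlike the pro-$p$ setting of Proposition \ref{prop_weak_minimality}, there is no residue-field semisimplicity to exploit: one must genuinely control $\norm{f}'$ for all $f\in\schw$ starting from the single equality $\norm{v_0}'=1$ and from the crude bound $\norm{}'\dominated\supnorm{\cdot}$. The plan, as advertised in the introduction, is to use $q$-arithmetic: $q$-Mahler expansions in $C(\Z_p)$ and $p$-adic evaluations of the relevant $q$-analog identities should provide explicit expressions that express an arbitrary $f\in\schw$ as a controlled combination of elements in the $\heis$-orbit of $v_0$ (obtained by translations and multiplication by smooth characters), converting the uniform domination into the sharper pointwise bound $\norm{f}'\leq\supnorm{f}$. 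This transport of norm information from $v_0$ to general vectors is the technical heart of the later sections and is where the paper's deeper tools are brought to bear.
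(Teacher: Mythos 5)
Your Steps 1 and 2 reproduce exactly the paper's reductions and the paper's decomposition of Theorem \ref{thm_strong_minimality_Z_p} into \emph{weak minimality} plus \emph{local maximality} at the characteristic function; that part is correct. You also correctly identify that local maximality is the technical heart and requires $q$-arithmetic, though you do not attempt it (the paper's proof of it, after reducing further from $\Z_p^d$ to $\Z_p$, is the entire content of Section 6: growth modulus, $q$-Mahler bases, and the $p$-adic estimate of $(\zeta;\zeta)_n$).

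There is, however, a genuine gap in Step 3. The residue-field argument for weak minimality does \emph{not} work at the $\Q_p^d$ level as you sketch it. Modulo the maximal ideal of $\mathcal{O}_{\C_p}$, every value $\psi(t)$ is a $p$-power root of unity and hence $\equiv 1$; consequently the multiplication-by-character part of the Heisenberg action (including the central character) becomes trivial, and what survives in $\schw(\Q_p^d,\overline{\fld{F}}_p)$ is only the translation action of $(\Q_p^d,+)$. Since $\Q_p^d$ is not compact, the $p$-group fixed-vector argument of Proposition \ref{prop_weak_minimality} is unavailable, and in fact the conclusion that every nonzero sub-representation contains $\overline{\textbf{1}_{\Z_p^d}}$ is simply false: the $\overline{\fld{F}}_p$-span of the translates $\textbf{1}_{\Z_p^d}-\textbf{1}_{a+\Z_p^d}$ is a nonzero translation-invariant subspace of $\schw(\Q_p^d,\overline{\fld{F}}_p)$ which does not contain $\overline{\textbf{1}_{\Z_p^d}}$ (all its elements, viewed as finitely supported functions on $\Q_p^d/\Z_p^d$, have coefficient sum zero). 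The paper sidesteps this by first establishing the theorem on $\schw(\Z_p^d)$ (indeed on $\schw(\Z_p)$), where the translation group is the compact pro-$p$ group $\Z_p^d$ and Proposition \ref{prop_weak_minimality} genuinely applies, and only then passing to $\Q_p^d$ by restricting $\norm{}$ to the subspaces $V_n=\schw(p^{-n}\Z_p^d)$ and observing that the scalars $r_n$ must all agree because $\textbf{1}_{\Z_p^d}$ lies in every $V_n$. You need to interpose this restriction-and-gluing step; the direct residue-field argument over $\Q_p^d$ does not deliver weak minimality.
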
 
In particular, each of the norms $\norm{}_\alpha$ is locally maximal at every non-zero vector in the completion of $\schw$ in it. 

The following proposition gives some basic properties of the completions of $\schw$ by a norm $\norm{}_\alpha$.
The first property follows from Theorem \ref{thm_strong_minimality} in conjunction with Theorem \ref{thm_strongly_cyclic_spaces} while the other are simpler.
\begin{prop}\label{topologically_irreducible}
Let $\alpha\in\Grassmannian$ and $\norm{}_\alpha$ the corresponding norm.
    \begin{enumerate}
    \item The completion $\completion{\schw}{\norm{}_\alpha}$ is a strongly irreducible Banach representation of $\heis$.
    \item The smooth part of $\completion{\schw}{\norm{}_\alpha}$ is precisely $\schw$.
    \item Let $\beta\in\Grassmannian$.
    The space of continuous $\heis$-equivariant maps from $\completion{\schw}{\norm{}_\alpha}$ to $\completion{\schw}{\norm{}_\beta}$ is given by
    \[Hom_{\heis}\sog{\completion{\schw}{\norm{}_\alpha},\completion{\schw}{\norm{}_\beta}}\simeq
    \begin{cases}
    \C_p & \alpha=\beta\\
    0    & \alpha\neq \beta
    \end{cases}.\]
    \end{enumerate}
\end{prop}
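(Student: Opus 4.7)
For \textit{(1)}, the plan is to derive strong irreducibility as a direct consequence of Theorem \ref{thm_strong_minimality} via Theorem \ref{thm_strongly_cyclic_spaces}. I would first observe that for each non-zero $v \in B_\alpha := \completion{\schw}{\norm{}_\alpha}$, the rescaled norm $\norm{v}_\alpha^{-1}\cdot\norm{}_\alpha$ is locally maximal at $v$: any competing $\heis$-invariant norm $\norm{}'$ on $B_\alpha$ normalized at $v$ and dominated by $\norm{}_\alpha$ restricts to a norm on $\schw$ (its kernel is an $\heis$-stable subspace of the smoothly irreducible $\schw$, hence zero), and Theorem \ref{thm_strong_minimality} forces this restriction to be a scalar multiple of $\norm{}_\alpha$; density of $\schw$ combined with continuity propagates this equality to all of $B_\alpha$, and normalization at $v$ pins down the scalar. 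Theorem \ref{thm_strongly_cyclic_spaces} then makes every non-zero $v$ strongly cyclic, yielding strong irreducibility.

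For \textit{(2)}, I would first handle $\alpha = \infty$ by direct computation on $C_0(\Q_p^d)$ and then reduce the general case via the Weil intertwiner. A vector $f \in C_0(\Q_p^d)$ has open $\rho_\psi$-stabilizer in $\heis$ iff $f$ is uniformly locally constant (forced by the translation subgroup) and has compact support (forced by the multiplication-by-character subgroup requiring $\psi(b \dotprod x) = 1$ on $\mathrm{supp}(f)$ for $b$ in a neighborhood of $0$), and these two conditions together cut out exactly $\schw$. For general $\alpha = Pg$, the operator $T_g$ extends to an isometric Banach-space isomorphism $\Ucomp{T_g} : B_\alpha \to C_0(\Q_p^d)$ that is $(\rho_{g,\psi}, \rho_\psi)$-equivariant. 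Because the twist $[w,t] \mapsto [wg, t]$ is a topological group automorphism of $\heis$, it preserves the collection of open subgroups; hence the $\rho_\psi$- and $\rho_{g,\psi}$-smooth vectors of $B_\alpha$ coincide. Thus $\Ucomp{T_g}$ identifies the $\rho_\psi$-smooth part of $B_\alpha$ with the $\rho_\psi$-smooth part of $C_0(\Q_p^d)$, namely $\schw$; since $T_g$ bijects $\schw$ onto itself, the preimage is $\schw \subset B_\alpha$.

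For \textit{(3)}, let $T : B_\alpha \to B_\beta$ be a continuous $\heis$-equivariant map. Continuity and equivariance guarantee that $T$ sends smooth vectors to smooth vectors, so by \textit{(2)}, $T(\schw) \subset \schw$. Schur's lemma applied to the (Stone--von Neumann) smoothly irreducible $\schw \cong \rho_\psi$ gives $T|_\schw = c \cdot \mathrm{Id}$ for some $c \in \C_p$. The case $c = 0$ gives $T = 0$ by density. For $c \neq 0$, boundedness of $T$ yields $\norm{}_\beta \dominated \norm{}_\alpha$ on $\schw$, so Theorem \ref{thm_strong_minimality} combined with the shared normalization $\norm{\textbf{1}_{\Z_p^d}}_\alpha = \norm{\textbf{1}_{\Z_p^d}}_\beta = 1$ forces $\norm{}_\alpha = \norm{}_\beta$; Proposition \ref{prop_norms_Grassmannian} then identifies the underlying cosets $Pg = Ph$, so $\alpha = \beta$, and in that case $T = c \cdot \mathrm{Id}_{B_\alpha}$ is the unique continuous extension, giving $\mathrm{Hom}_{\heis}(B_\alpha, B_\beta) \simeq \C_p$.

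The principal obstacle is in \textit{(2)}: one must juggle two \emph{a priori} distinct $\heis$-actions on $B_\alpha$ -- the native $\rho_\psi$ and the twisted $\rho_{g,\psi}$ under which $\Ucomp{T_g}$ is equivariant -- and verify that smoothness is insensitive to this twist (which reduces to the fact that $g$ induces a topological group automorphism of $\heis$). The minor but essential subtlety in \textit{(3)} is extracting $\alpha = \beta$ from the equality $\norm{}_\alpha = \norm{}_\beta$, which is exactly what Proposition \ref{prop_norms_Grassmannian} supplies.
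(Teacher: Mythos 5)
Your proposal is correct and follows essentially the same route as the paper. Parts (2) and (3) are the paper's arguments with a bit more detail spelled out (the explicit description of smooth vectors in $C_0(\Q_p^d)$, and the observation that a group automorphism of $\heis$ permutes open subgroups, hence preserves smoothness); the subtlety you flag in (3) about deducing $\alpha=\beta$ from equality of norms via Proposition \ref{prop_norms_Grassmannian} is indeed exactly where the paper appeals to that proposition. The only structural difference is in (1): you establish local maximality of the (rescaled) norm at every non-zero $v\in B_\alpha$ directly and then invoke Theorem \ref{thm_strongly_cyclic_spaces} pointwise, whereas the paper first checks topological irreducibility of $B_\alpha$ (by passing to the quotient semi-norm on the dense smooth part $\schw$) and then concludes via the equivalence in Theorem \ref{Thm_strong_irreducibility}. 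Both paths run through Theorem \ref{thm_strong_minimality} applied on $\schw$ together with density and continuity of the competing norm, so they are interchangeable; the paper's version is a touch more economical while yours makes the pointwise local-maximality explicit. One minor remark: your parenthetical about the kernel of $\norm{}'$ restricted to $\schw$ is unnecessary, since $\norm{}'$ is already a norm on $B_\alpha$ and so restricts to a norm on any subspace automatically.
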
 
Theorem \ref{thm_strong_minimality} and Proposition \ref{topologically_irreducible} form a $p$-adic analog of a classical result about unitary representations:
\begin{thm}[Classical known result]
Let $\schw^{\C}$ denote the space of $\C$-valued Schwartz functions on $\Q_p^d$, and $\rho_\psi^{\C}$ the complex Schrödinger representation.
Then, up to a positive scalar, there exists a unique $\heis$-invariant unitary structure on $\schw^{\C}$.
The completion with respect to the associated norm is topologically irreducible and its smooth part is the space $\schw^{\C}$.
\end{thm}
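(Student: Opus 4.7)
The plan is to construct the standard $\heis$-invariant inner product explicitly, use a regularization argument together with the algebraic irreducibility of $\rho_\psi^{\C}$ to establish topological irreducibility of the completion, identify the smooth part directly, and finally deduce uniqueness from the classical $L^2$ Stone--von Neumann theorem combined with algebraic Schur's lemma.

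First I would exhibit the inner product
\[
\langle f,g\rangle = \intop_{\Q_p^d} f(x)\overline{g(x)}\,dx,\qquad f,g\in\schw^{\C},
\]
where $dx$ is an $\R$-valued Haar measure on $\Q_p^d$. Invariance under $\heis$ is immediate: translations preserve $dx$ and the smooth characters entering $\rho_\psi^{\C}$ take values on the complex unit circle, so the absolute values cancel inside the integrand. Let $H=L^2(\Q_p^d)$ denote the Hilbert completion. A vector $f\in H$ is smooth iff some open compact subgroup of $\heis$ fixes it; fixation under the translations $\{[(a,0),0]:a\in p^{N}\Z_p^d\}$ forces $f$ to be locally constant a.e., while fixation under $\{[(0,b),0]:b\in p^{N}\Z_p^d\}$ gives $f(x)(\psi(b\dotprod x)-1)=0$ a.e., compelling the essential support of $f$ to be compact. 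Hence the smooth part of $H$ equals $\schw^{\C}$.

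For topological irreducibility of $H$, let $0\neq K\subset H$ be a closed $\heis$-invariant subspace and pick $0\neq f\in K$. Averaging $f$ against the normalized characteristic function of a sufficiently small open compact subgroup $K_n\subset\heis$ produces a $K_n$-fixed (hence smooth) vector $f_n\in K$; strong continuity of the unitary action guarantees $f_n\to f$, so $f_n\neq 0$ for large $n$. Thus $K\cap\schw^{\C}\neq 0$, and since $\schw^{\C}\cong\rho_\psi^{\C}$ is algebraically irreducible by the smooth Stone--von Neumann theorem of Section 2, $\schw^{\C}\subset K$; density of $\schw^{\C}$ in $H$ then forces $K=H$.

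For uniqueness, let $B$ be any $\heis$-invariant positive-definite Hermitian form on $\schw^{\C}$ and let $H_B$ denote its Hilbert completion. The argument of the preceding paragraph applies verbatim to show $H_B$ is topologically irreducible. Invoking the classical $L^2$ Stone--von Neumann theorem (derivable from the smooth version together with the regularization device above, since any topologically irreducible unitary $\heis$-representation with central character $\psi$ must contain a nonzero smooth vector generating a copy of $\rho_\psi^{\C}$, forcing unitary equivalence with $L^2(\Q_p^d)$), we obtain a unitary $\heis$-equivariant isomorphism $U:H_B\to H$. Equivariance and continuity ensure that $U$ and $U^{-1}$ both carry smooth vectors to smooth vectors, so $U\bigl((H_B)^{\infty}\bigr)=\schw^{\C}$. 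Since $\schw^{\C}\subset(H_B)^{\infty}$ trivially, $\restriction{U}{\schw^{\C}}$ is a nonzero $\heis$-equivariant endomorphism of the algebraically irreducible, admissible representation $\rho_\psi^{\C}$; algebraic Schur's lemma gives $\restriction{U}{\schw^{\C}}=\lambda\cdot\mathrm{Id}$ for some $\lambda\in\C^{\times}$. In particular $U(\schw^{\C})=\schw^{\C}$, which forces $(H_B)^{\infty}=\schw^{\C}$, and pulling the $L^2$ inner product back through $U$ yields $B=\abs{\lambda}^{2}\langle\cdot,\cdot\rangle$, proving uniqueness up to positive scalar.

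The main obstacle is the appeal to the classical unitary Stone--von Neumann theorem: the smooth version in the paper identifies smooth irreducible representations with central character $\psi$ but does not by itself furnish a Hilbert-level unitary equivalence. Bridging this gap requires either citing the classical $L^2$ Stone--von Neumann theorem over $\Q_p$ as a black box or constructing the intertwiner $U$ concretely via matrix coefficients of $\rho_\psi^{\C}$; the regularization argument used for irreducibility is the key ingredient that ensures the smooth vectors needed for either approach really exist in the abstract completion $H_B$.
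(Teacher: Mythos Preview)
The paper does not prove this theorem: it is labeled ``Classical known result'' and stated only to contrast with the $p$-adic Theorem~\ref{thm_strong_minimality} and Proposition~\ref{topologically_irreducible}; no proof is given. So there is nothing in the paper to compare your argument against.

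On its own merits your sketch is mostly correct. The $L^2$ inner product, the identification of the smooth part of $L^2(\Q_p^d)$ with $\schw^{\C}$, and the regularization argument for topological irreducibility are all standard and fine.

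The one genuine soft spot is the uniqueness step. You invoke the classical unitary Stone--von Neumann theorem and then parenthetically claim it follows from the smooth version plus regularization: any topologically irreducible unitary representation with central character $\psi$ contains a dense copy of $\schw^{\C}$, hence is a Hilbert completion of $\schw^{\C}$ in \emph{some} invariant inner product. But to conclude that this completion is $L^2(\Q_p^d)$ you need to know that the inner product is the standard one up to scalar---which is precisely the uniqueness statement you are proving. As written this is circular. You flag the issue yourself and mention matrix coefficients as an escape, but you do not carry it out.

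A cleaner route bypasses Stone--von Neumann entirely. Given an invariant positive Hermitian form $B$ on $\schw^{\C}$, the functional $f\mapsto B(f,g)$ lies in the smooth conjugate-dual of $\schw^{\C}$ for each $g$ (it is fixed by the stabilizer of $g$), and the $L^2$ pairing identifies this dual with $\schw^{\C}$ itself. This yields an $\heis$-equivariant linear endomorphism $T$ of $\schw^{\C}$ satisfying $B(f,g)=\langle Tf,g\rangle$; algebraic Schur's lemma then gives $T=c\cdot\mathrm{Id}$ with $c>0$ by positivity. This is both shorter and self-contained.
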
 

Using Theorem \ref{Thm_strong_irreducibility} we will derive the following rigidity property.

\begin{thm}\label{thm_Rigidity}
Let $\alpha\in\Grassmannian$.
Let $(B,\norm{})$ be a topologically irreducible Banach representation of $\heis$ (see Definition \ref{def_Banach_rep}).
Assume that we are in one of the two following cases.
    \begin{enumerate}
    \item $F:B\map \completion{\schw}{\norm{}_\alpha}$ is a non-zero continuous map of representations.
    \item $F:\completion{\schw}{\norm{}_\alpha}\map B$ is a non-zero continuous map of representations.
    \end{enumerate}
Then $F$ is an isomorphism.
Moreover, there exists $r>0$ such that by replacing $\norm{}$ with $r\cdot \norm{}$, $F$ becomes an isometric isomorphism.
\end{thm}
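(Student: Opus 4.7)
The plan is to combine two tools already in hand: the strong irreducibility of $\completion{\schw}{\norm{}_\alpha}$, supplied by Proposition \ref{topologically_irreducible}(1), and the rigidity of the norm $\norm{}_\alpha$, supplied by Theorem \ref{thm_strong_minimality}. The first tool will provide bijectivity of $F$; the second, via a pull-back norm argument, will upgrade the map to an isometry up to rescaling, from which surjectivity in Case $(2)$ will also fall out.

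First I would establish that $F$ is bijective. In Case $(1)$, because $\completion{\schw}{\norm{}_\alpha}$ is strongly irreducible, condition $(4)$ of Theorem \ref{Thm_strong_irreducibility} forces every non-zero continuous $\heis$-equivariant map into it to be surjective, so $F$ is surjective. The kernel of $F$ is a closed $\heis$-invariant subspace of $B$, and topological irreducibility of $B$ together with $F\neq 0$ force this kernel to vanish. Hence $F$ is a continuous equivariant bijection and the open mapping theorem makes $F^{-1}$ continuous as well. In Case $(2)$, the same kernel argument applied to the source gives injectivity directly; surjectivity will be deduced from the isometry step.

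Next I would produce the isometry by a pull-back argument. In Case $(2)$, define on $\schw$ the $\heis$-invariant norm $\norm{w}' := \norm{F(w)}$. This is genuinely a norm by the just-established injectivity, and it is dominated by $\norm{}_\alpha$ by continuity of $F$. Theorem \ref{thm_strong_minimality} then forces $\norm{}' = r\cdot \norm{}_\alpha$ on $\schw$ for some $r>0$, and by density of $\schw$ in $\completion{\schw}{\norm{}_\alpha}$ together with continuity of both sides, the identity $\norm{F(w)} = r\cdot\norm{w}_\alpha$ extends to the whole completion. Consequently $F$ is an isometric embedding (up to the factor $r$), so its image is complete and hence closed in $B$; combined with topological irreducibility of $B$, which gives that the closure of $Im(F)$ equals $B$, this yields surjectivity. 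In Case $(1)$, one applies the same recipe to the continuous inverse: the norm $\norm{w}' := \norm{F^{-1}(w)}$ on $\schw \subseteq \completion{\schw}{\norm{}_\alpha}$ is $\heis$-invariant and dominated by $\norm{}_\alpha$, Theorem \ref{thm_strong_minimality} identifies it with $r\cdot \norm{}_\alpha$, and density extends the identity. In either case, rescaling $\norm{}$ on $B$ by the factor $r$ turns $F$ into an isometry.

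The main conceptual hurdle, such as it is, is bridging between the rigidity theorem, which concerns norms on the algebraic space $\schw$, and isometry on its completion; this is a routine density argument once the pull-back norm has been shown to be an invariant norm on $\schw$ dominated by $\norm{}_\alpha$. A secondary point to take care of is that in Case $(2)$ the pull-back norm is \emph{a priori} only a seminorm, so one must establish injectivity of $F$ (via topological irreducibility of the source) before invoking Theorem \ref{thm_strong_minimality}.
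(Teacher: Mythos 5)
Your proposal is correct and follows essentially the same route as the paper: topological irreducibility of the source gives injectivity, strong irreducibility of $\completion{\schw}{\norm{}_\alpha}$ (via Theorem \ref{Thm_strong_irreducibility}) gives surjectivity, and pulling back the norm and applying Theorem \ref{thm_strong_minimality} produces the isometry after rescaling. You are somewhat more explicit than the paper about two technical points — that one must check injectivity in Case $(2)$ before the pull-back is an honest norm rather than a seminorm, and that the identity $\norm{F(\cdot)}=r\cdot\norm{}_\alpha$ is first obtained on $\schw$ and then extended to the completion by density — but these are small clarifications of the same argument, not a different one.
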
 

In order to prove Theorem \ref{thm_strong_minimality} we will prove its $\Z_p$-analog.
Let $\schw(\Z_p^d)$ denote the space of locally constant, $\C_p$-valued functions on $\Z_p^d$.
The sup norm on $\schw(\Z_p^d)$ is invariant under translations and under multiplication by the smooth characters of $\Z_p^d$.
Here, as before, a smooth character of $\Z_p^d$ is a homomorphism $\chi:(\Z_p^d,+)\map \C_p^\times$ with an open kernel.

\begin{thm}\label{thm_strong_minimality_Z_p}
Let $\norm{}$ be a norm on $\schw(\Z_p^d)$ that is dominated by the sup norm and invariant under translations and multiplication by smooth characters.
Then $\norm{}=r\cdot \supnorm{}$ for some $r>0$.
\end{thm}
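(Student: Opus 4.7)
The plan is to combine the weak minimality of the sup norm (Proposition~\ref{prop_weak_minimality}) with a local-maximality-type estimate obtained by $q$-arithmetic. After rescaling, assume $\norm{\textbf{1}_{\Z_p^d}} = 1$; the goal becomes $\norm{} = \supnorm{}$. Write $V_N \subset \schw(\Z_p^d)$ for the finite-dimensional subspace of functions constant on cosets of $p^N\Z_p^d$, and set $c_N := \norm{\textbf{1}_{p^N\Z_p^d}}$. Translation-invariance gives $\norm{\textbf{1}_{a+p^N\Z_p^d}} = c_N$ for every coset representative $a$, and the ultrametric inequality in the coset basis of $V_N$ yields $\norm{f} \leq c_N\,\supnorm{f}$ for every $f \in V_N$. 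The partition $\textbf{1}_{p^M\Z_p^d} = \sum_a \textbf{1}_{a+p^N\Z_p^d}$ (for $N\geq M$, summed over cosets of $p^N\Z_p^d$ inside $p^M\Z_p^d$) shows that $(c_N)$ is monotone non-decreasing with $c_0 = 1$, while $\norm{}\dominated\supnorm{}$ gives a uniform bound $c_N \leq D$. Since $\schw(\Z_p^d) = \bigcup_N V_N$, the entire theorem reduces to the sharp numerical equality $c_N = 1$ for every $N$.

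The main obstacle is this sharp equality $c_N = 1$. A direct Fourier inversion $\textbf{1}_{p^N\Z_p^d} = p^{-Nd}\sum_{\chi\in\Gamma_N}\chi$, where $\Gamma_N$ denotes the group of smooth characters of $\Z_p^d$ trivial on $p^N\Z_p^d$, combined with the character-invariance identity $\norm{\chi} = \norm{\chi\cdot\textbf{1}_{\Z_p^d}} = 1$, yields only the useless bound $c_N \leq p^{Nd}$ (the prefactor $p^{-Nd}$ has $p$-adic absolute value $p^{Nd}$). To sharpen this I would follow the route indicated in the introduction and work in the $q$-Mahler basis of $C(\Z_p)$: choosing $q \in \C_p$ with $\pabs{q-1} < 1$ and $q^{p^N} = 1$ makes $\chi_q(x) = q^x$ a smooth character, and the $q$-binomial functions $\binom{x}{k}_q$ form an orthonormal basis of $(C(\Z_p),\supnorm{})$ on which translation-by-$1$ and multiplication-by-$\chi_q$ act triangularly, with entries given by explicit $q$-combinatorial identities. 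Applying the hypothesis $\norm{}\dominated\supnorm{}$ to carefully chosen linear combinations of $q$-Mahler basis vectors, and exploiting the $p$-adic valuations of $q$-factorials and $q$-binomials evaluated at $p^N$-th roots of unity, should yield recursive relations among the $c_N$ tight enough to collapse them all to $1$. (The introduction notes that this kind of estimate is essentially equivalent to Theorem~$2$ of Fresnel--de Mathan \cite{demathan}.)

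Once $c_N = 1$ is known for every $N$, the pointwise inequality $\norm{} \leq \supnorm{}$ holds on $\schw(\Z_p^d)$, and the weak-minimality argument of Proposition~\ref{prop_weak_minimality} takes over. Indeed, the reduction of the $\supnorm{}$-unit ball of $\schw(\Z_p^d)$ modulo the maximal ideal of $\mathcal{O}_{\C_p}$ is the space $\schw(\Z_p^d,\overline{\fld{F}}_p)$, on which the smooth characters of $\Z_p^d$ act trivially (their values being $p$-power roots of unity, congruent to $1$ modulo the maximal ideal), so that the residual action is only by translations of the pro-$p$ group $\Z_p^d$; any non-zero sub-representation of $\schw(\Z_p^d,\overline{\fld{F}}_p)$ under this action then contains a translation-invariant vector, hence the image of $\textbf{1}_{\Z_p^d}$. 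The lemma preceding Proposition~\ref{prop_weak_minimality} thus yields weak minimality of $\supnorm{}$ at $\textbf{1}_{\Z_p^d}$, and combined with $\norm{} \leq \supnorm{}$ and the normalization $\norm{\textbf{1}_{\Z_p^d}} = 1$ this forces $\norm{} = \supnorm{}$; un-normalising gives $\norm{} = r\cdot\supnorm{}$ for some $r > 0$, as required.
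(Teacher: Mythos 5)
Your high-level strategy is correct and matches the paper: reduce to showing $\norm{}\leq\supnorm{}$ (after normalizing at $\textbf{1}_{\Z_p^d}$), then invoke weak minimality (Proposition~\ref{prop_weak_minimality}) to upgrade the one-sided inequality to equality. Your reduction to the statement $c_N:=\norm{\textbf{1}_{p^N\Z_p^d}}=1$ is also sound (for $f\in V_N$, the coset decomposition and the ultrametric inequality give $\norm{f}\leq c_N\supnorm{f}$), and the final weak-minimality step is handled correctly.

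The genuine gap is the entire technical heart of the theorem: you never actually prove $c_N=1$. You name the right tools ($q$-Mahler bases, $p$-adic valuations of $q$-Pochhammer symbols at $p$-power roots of unity) but stop at ``should yield recursive relations among the $c_N$ tight enough to collapse them all to $1$,'' which is not an argument. Moreover, the specific way you propose to deploy the tools would not work: you suggest taking $q$ itself a primitive $p^N$-th root of unity, so that $q^x$ is a smooth character. But then $\norm{q^x}=1$ by invariance, and the $q$-Mahler expansion of $q^x$ is just $\qbinom{x}{0}{q}+(q-1)\qbinom{x}{1}{q}$, which yields no information. The paper's crucial idea is to separate $\zeta$ and $q$: after reducing to $d=1$, it argues by contradiction. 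One introduces the growth modulus $G_{\norm{}}(r)=\sup_n\norm{\binom{x}{n}}r^n$, assumes $G_{\norm{}}(1)>1$, and fixes $h\in\C_p$ with $s=\pabs{h}<1$ a regular value of the growth modulus for which $G_{\norm{}}(s)>1$. For $\zeta$ a primitive $p^N$-th root of unity with $N$ very large, one sets $q=\zeta+h$ --- so $q$ is near $\zeta$ but is \emph{not} a root of unity, hence $q^x$ is \emph{not} a smooth character and $\norm{q^x}=G_{\norm{}}(s)>1$. One then expands the smooth character $\zeta^x$ in the $q$-Mahler basis, $\zeta^x=\sum_k\pcoeff{\zeta}{q}_k\qbinom{x}{k}{q}$, and shows the $k=1$ term strictly dominates all others: small $k$ are controlled by the regularity of $s$ and elementary valuation estimates, while large $k$ are killed by the asymptotics of $\log_p\pabs{(\zeta;\zeta)_m}$ (the quantity $\beta_p$). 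This forces $\norm{\zeta^x}=\norm{\pcoeff{\zeta}{q}_1\qbinom{x}{1}{q}}>1$, contradicting $\norm{\zeta^x}=1$. None of this --- the growth modulus, the regular value $s$, the perturbation $q=\zeta+h$, and the single-term dominance --- appears in your proposal, and without them the $q$-Mahler machinery does not bite.

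A secondary, more minor issue: the paper first reduces $\Z_p^d$ to $\Z_p$ (by a short projection argument) before running the $q$-Mahler argument. Your reduction to $c_N=1$ is stated for $\Z_p^d$, but your $q$-Mahler discussion is on $C(\Z_p)$; you would still need the reduction to $d=1$, or a multivariable version of the estimate, and you do not supply either.
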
 

\subsection{A new proof of the main results of Fresnel and de Mathan}\label{subsection_new_proof_Fresnel_de_Mathan}
Our method gives a new proof of the main results in \cite{demathan}.
In that paper, Fresnel and de Mathan studied the Fourier transform 
\[\mathcal{F}:C_0(\Q_p/\Z_p)\map C(\Z_p)\]
attached to a smooth character $\psi:(\Q_p,+)\map \C_p^\times$ with $\ker(\psi)=\Z_p$.
Here, $C_0(\Q_p/\Z_p)$ denotes the space of $\C_p$-valued functions on $\Q_p/\Z_p$ which go to zero at infinity, and is equipped with the sup norm.
The main results in \cite{demathan} are stated in the following theorem.
\begin{thm}
The Fourier transform $\mathcal{F}$ is surjective and is not injective.
Moreover, if $K$ denotes its kernel, the induced map 
\[C_0(\Q_p/\Z_p)/K\map C(\Z_p)\]
is a surjective isometry.
\end{thm}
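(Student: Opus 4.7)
The plan is to recognize the Fourier transform $\mathcal{F}$ as the canonical continuous map from a universal unitary completion of a cyclic representation, and thereby reduce the theorem of Fresnel--de Mathan to the local maximality of the sup norm on $C(\Z_p)$, which will then follow from Theorem \ref{thm_strong_minimality_Z_p}.

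Let $G$ denote the group of operators on $C(\Z_p)$ generated by translations by $\Z_p$ and by multiplications by smooth characters of $\Z_p$; this group preserves $\supnorm{}$. Let $V \subset C(\Z_p)$ be the algebraic $G$-subrepresentation generated by the constant function $\textbf{1}$. Translations fix $\textbf{1}$, and multiplication by the smooth character $\chi_t(x) := \psi(xt)$ (indexed via $\psi$ by $t \in \Q_p/\Z_p$) sends $\textbf{1}$ to $\chi_t$, so $V$ is the $\C_p$-linear span of the smooth characters of $\Z_p$. As these characters are linearly independent, the maximal invariant norm at $\textbf{1}$ is $\norm{\sum_t c_t \chi_t}_{\textbf{1}} = \max_t \abs{c_t}_p$. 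Hence $\Ucomp{V}$ is canonically identified, via $f \leftrightarrow \sum_t f(t)\chi_t$, with $C_0(\Q_p/\Z_p)$ carrying the sup norm, and under this identification the canonical $G$-equivariant continuous map $\Ucomp{V} \to C(\Z_p)$ is precisely $\mathcal{F}$.

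By the equivalence $(2) \Leftrightarrow (3)$ of Theorem \ref{thm_strongly_cyclic_spaces}, the conjunction of the surjectivity of $\mathcal{F}$ and the isometric property of $C_0(\Q_p/\Z_p)/K \to C(\Z_p)$ is equivalent to $\supnorm{}$ being normalized and locally maximal at $\textbf{1}$ on $C(\Z_p)$. To establish this local maximality, let $\norm{}'$ be any $G$-invariant norm on $C(\Z_p)$ with $\norm{\textbf{1}}' = 1$ and $\norm{}' \dominated \supnorm{}$. Restricting $\norm{}'$ to $\schw(\Z_p) \subset C(\Z_p)$, Theorem \ref{thm_strong_minimality_Z_p} (with $d=1$) yields $r > 0$ with $\norm{}' = r \cdot \supnorm{}$ on $\schw(\Z_p)$; evaluation at $\textbf{1} = \textbf{1}_{\Z_p} \in \schw(\Z_p)$ forces $r = 1$. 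Since $\norm{}' \dominated \supnorm{}$, the norm $\norm{}'$ is continuous for the sup topology, and the equality $\norm{}' = \supnorm{}$ on the sup-dense subspace $\schw(\Z_p)$ extends to all of $C(\Z_p)$.

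For the non-injectivity of $\mathcal{F}$: were $K = 0$, the isometric quotient would force $\mathcal{F}$ itself to be an isometry. But for $n \geq 1$, standard $p$-adic orthogonality gives $\mathcal{F}(\textbf{1}_{p^{-n}\Z_p/\Z_p}) = p^n \cdot \textbf{1}_{p^n \Z_p}$, so $\supnorm{\textbf{1}_{p^{-n}\Z_p/\Z_p}} = 1$ whereas $\supnorm{\mathcal{F}(\textbf{1}_{p^{-n}\Z_p/\Z_p})} = p^{-n}$, contradicting isometry. The main obstacle of this plan is the conceptual identification in the first paragraph: once $\mathcal{F}$ is recast as a universal completion map, the analytic content of Fresnel--de Mathan becomes a rigidity statement on $C(\Z_p)$ that the heavy Theorem \ref{thm_strong_minimality_Z_p} is tailor-made to resolve.
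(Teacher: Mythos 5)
Your proof is correct and, while it invokes the same key analytic input (Theorem~\ref{thm_strong_minimality_Z_p}) and the same structural result (Theorem~\ref{thm_strongly_cyclic_spaces}), it organizes the deduction differently from the paper. The paper treats $\mathcal{F}$ as an \emph{external} morphism of Banach representations: it first upgrades local maximality of $\supnorm{}$ at every norm-one vector of $C(\Z_p)$ to strong irreducibility, invokes Theorem~\ref{Thm_strong_irreducibility} to get surjectivity, and then handles the isometry by a separate argument --- the open mapping theorem plus Theorem~\ref{thm_strong_minimality_Z_p} produce some homothety constant $r$, and a further argument (via Proposition~\ref{prop_open_unit_ball_of_strongly_cyclic} applied to the strongly cyclic vector $\phi_0$ in $C_0(\Q_p/\Z_p)$) pins down $r=1$. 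Your route is to recognize $\mathcal{F}$ \emph{intrinsically} as the canonical map from the universal unitary completion of the $G$-span $V$ of $\textbf{1}$, by noting that $\mathcal{O}_{\C_p}[G]\cdot\textbf{1}$ is the $\mathcal{O}_{\C_p}$-span of the characters $\chi_t$ and therefore $(\Ucomp{V},\norm{}_{\textbf{1}})$ is canonically $C_0(\Q_p/\Z_p)$ with the sup norm. This makes the conclusion a literal instance of the equivalence $(2)\Leftrightarrow(3)$ of Theorem~\ref{thm_strongly_cyclic_spaces} at the single vector $\textbf{1}$: the surjectivity and the isometry of the induced map $C_0(\Q_p/\Z_p)/K\to C(\Z_p)$ come out together, with no residual normalization constant to chase. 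What your approach buys is precisely the elimination of the two extra steps (the $\phi_0$ argument for $r=1$ and the detour through strong irreducibility for surjectivity); what it requires in exchange is the explicit identification of $C_0(\Q_p/\Z_p)$ as a universal completion, which you justify cleanly via linear independence of characters. The remaining details check out: the restriction-to-$\schw(\Z_p)$ step followed by the density-plus-continuity extension is valid because an invariant norm dominated by $\supnorm{}$ is automatically $\supnorm{}$-continuous (by the ultrametric reverse triangle inequality), and your non-injectivity computation $\mathcal{F}(\textbf{1}_{p^{-n}\Z_p/\Z_p})=p^n\cdot\textbf{1}_{p^n\Z_p}$ matches the paper's.
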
 

    \begin{proof}
    Let $\mathcal{H}(\Z_p)$ be the following subgroup of the Heisenberg group $\mathcal{H}_3(\Q_p)$,
    \[\heis(\Z_p)=\braces{[a,b,t]\ |\ a\in\Z_p}.\]
    Then $\heis(\Z_p)$ acts on $C(\Z_p)$ by the usual rule
    \[([a,b,t]f)(x)=\psi(t+bx)\cdot f(x+a),\]
    namely, by translations and multiplication by smooth characters.
    The group $\heis(\Z_p)$ also acts on $C(\Q_p/\Z_p)_0$ by the rule
    \[([a,b,t]g)(x)=\psi(t-ab+ax)\cdot g(x-b).\]
    It is easy to verify that \[\mathcal{F}:C_0(\Q_p/\Z_p)\map C(\Z_p)\]
    is a continuous homomorphism of Banach representations of $\heis(\Z_p)$.
    By Theorem \ref{thm_strong_minimality_Z_p}, the sup norm on $C(\Z_p)$ is locally maximal with respect to any $f\in C(\Z_p)$ with $\supnorm{f}=1$.
    It follows from Theorem \ref{thm_strongly_cyclic_spaces} that any non-zero $f\in C(\Z_p)$ is strongly cyclic, hence that $C(\Z_p)$ is a strongly irreducible representation.
    By theorem \ref{Thm_strong_irreducibility}, $\mathcal{F}$ is surjective.
    If $\mathcal{F}$ were also injective, it would be, by the open mapping theorem, an isomorphism of Banach spaces.
    To see that this is not true, consider the characteristic functions $\phi_n(x):=\textbf{1}_{p^{-n}\Z_p}(x)\in C_0(\Q_p/\Z_p)$.
    Then $\supnorm{\phi_n}=1$, while $\supnorm{\mathcal{F}(\phi_n)}=\supnorm{p^n\cdot\textbf{1}_{p^n\Z_p}}=p^{-n}$.
    Therefore, $\mathcal{F}$ is not injective.
    Finally, denoting the kernel of $\mathcal{F}$ by $K$, we have the induced isomorphism of Banach representations
    \[C(\Q_p/\Z_p)_0/K\map C(\Z_p).\]
    By \ref{thm_strong_minimality_Z_p}, there exists a real number $r>0$ such that by taking the norm $r\cdot \supnorm{}$ on $C(\Z_p)$, the above isomorphism is an isometry.
    To show that $r=1$, it is enough to show that the image of the characteristic function $\phi_0$ in the quotient $C_0(\Q_p/\Z_p)/K$ has norm $1$.
    Note that $\phi_0$ is a strongly cyclic vector in $C_0(\Q_p/\Z_p)$, and that $\supnorm{}$ is a normalized and locally maximal at $\phi_0$.
    Thus, by Proposition \ref{prop_open_unit_ball_of_strongly_cyclic}, the open unit ball around $\phi_0$ in $C_0(\Q_p/\Z_p)$ consists of strongly cyclic vectors.
    In particular, all elements of $K$ are at distance at least one from $\phi_0$.
    It follows that the image of $\phi_0$ in the quotient has norm $1$.
    Therefore, $r=1$.
    \end{proof}
    
    \begin{remark}
    In \cite{demathan}, Fresnel and de Mathan first show that $\mathcal{F}$ is not injective by constructing non-zero elements in the kernel of $\mathcal{F}$.
    These elements have some special properties which then enable them to show that $\mathcal{F}$ is surjective.
    \end{remark}

\section{Proofs of the main results}
Theorems \ref{thm_strong_minimality_Z_p} will be proved in section $6$.
In this section we explain how to derive Theorem \ref{thm_Rigidity} and Proposition \ref{topologically_irreducible} from it, and perform easy reduction steps towards the proof in section $6$.

\subsection{Proof of Proposition \ref{topologically_irreducible}}

    \begin{proof}
    Let $\alpha\in\Grassmannian$.
    $(1)$. By Theorem \ref{thm_strong_minimality}, the norm $\norm{}_\alpha$ is locally maximal at $f$, for any $f\in \completion{\schw}{\norm{}_\alpha}$ with $\norm{f}_\alpha=1$.
    By Theorem \ref{Thm_strong_irreducibility} it is enough to show that $\completion{\schw}{\norm{}_\alpha}$ is topologically irreducible.
    Let $W$ be a proper closed sub-representation of $\completion{\schw}{\norm{}_\alpha}$.
    The quotient norm  on $\completion{\schw}{\norm{}_\alpha}/W$ induces an invariant semi-norm $\norm{}$ on $\schw$ that is dominated by $\norm{}_\alpha$.
    Since $\schw$ is irreducible, $\norm{}$ is a norm and by Theorem \ref{thm_strong_minimality}, $\norm{}=r\cdot \norm{}_\alpha$ for some $r>0$.
    Thus, $\norm{}$ is actually a norm and $W=0$.
    
    $(2)$. The claim is clear for $\completion{\schw}{\supnorm{}}=C_0(\Q_p^d)$.
    We will show that the general case follows from this one.
    Let $g\in \mathrm{Sp}_{2d}(\Q_p)$ such that $\alpha=Pg$, and let $T_g$ be a corresponding intertwining operator, normalized such that $\norm{}_\alpha=\supnorm{T_g(\cdot )}$ on $\schw$.
    Then $T_g$ extends to an isometric isomorphism $T_g:\completion{\schw}{\norm{}_\alpha}\map C_0(\Q_p^d)$.
    Although $T_g$ is not $\heis$-equivariant, it satisfies 
    \[T_g([w,t]f)=[wg,t]T_g(f).\]
    In particular, $T_g(f)$ is a smooth vector in $C_0(\Q_p^d)$ if and only if $f$ is a smooth vector in $\completion{\schw}{\norm{}_\alpha}$.
    
    $(3).$ Let $T:\completion{\schw}{\norm{}_\alpha}\map\completion{\schw}{\norm{}_\beta}$ be a continuous $\heis$-equivariant map.
    By the previous part, the restriction of $T$ to $\schw\subset \completion{\schw}{\norm{}_\alpha}$ is an $\heis$-equivariant map $T':\schw\map \schw$.
    By Schur's lemma for smooth representations, $T'$ is multiplication by a constant.
    If this constant is non-zero, it means that $\norm{}_\alpha$ and $\norm{}_\beta$, considered on $\schw$, are homothetic.
    By Proposition \ref{prop_norms_Grassmannian} this could only be the case if $\alpha=\beta$.
    Thus, if $\alpha\neq \beta$, $T=0$.
    If $\alpha=\beta$ then by continuity, $T$ is a multiplication by a scalar.
    \end{proof}

\subsection{Proof of Theorem \ref{thm_Rigidity}}

    \begin{proof}
    Let $(B,\norm{})$ be an irreducible Banach representation of $\heis$.
    Assume we are in the first case, and let $F:B\map \completion{\schw}{\norm{}_\alpha}$ be a continuous map of representations.
    Assume that $F$ is non-zero.
    Since $B$ is topologically irreducible, the kernel of $F$ is zero, so $F$ is injective. 
    By Proposition \ref{topologically_irreducible} and Theorem \ref{Thm_strong_irreducibility}, $F$ is surjective.
    Thus, $F$ is an isomorphism.
    The norm $\norm{F^{-1}(\cdot)}$ is an $\heis$-invariant norm on $\completion{\schw}{\norm{}_\alpha}$ that is dominated by $\norm{}_\alpha$.
    By theorem \ref{thm_strong_minimality}, there exists $r>0$ such that $r\cdot \norm{F^{-1}(\cdot)}=\norm{}_\alpha$.
    Replacing $\norm{}$ by $r\cdot\norm{}$, $F$ becomes an isometry.    
        
    Assume we are in the second case and let $F:\completion{\schw}{\norm{}_\alpha}\map B$ a continuous map of representations.
    Assume that $F$ is non-zero.
    The norm $\norm{F(\cdot)}$ is an $\heis$-invariant norm on $\completion{\schw}{\norm{}_\alpha}$ that is dominated by $\norm{}_\alpha$.
    By Theorem \ref{thm_strong_minimality}, there exists $r>0$ such that $r\cdot\norm{F(\cdot)}=\norm{}_\alpha$.
    Replacing $\norm{}$ by $r\cdot \norm{}$, $F$ becomes an isometry.
    The image of $F$ is therefore a closed sub-representation of $B$, and since $B$ is topologically irreducible, $F$ is surjective.
    \end{proof} 

\subsection{Reduction steps}
The goal of this section is to show that Theorem \ref{thm_strong_minimality} and Theorem \ref{thm_strong_minimality_Z_p} follow from the particular case of Theorem \ref{thm_strong_minimality_Z_p} with $d=1$.

\begin{prop}
If Theorem \ref{thm_strong_minimality} holds for the sup norm then it holds for $\norm{}_\alpha$ for any $\alpha\in\Grassmannian$.
\end{prop}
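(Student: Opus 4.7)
The plan is to exploit the fact that every norm $\norm{}_\alpha$ is, by definition, a scalar multiple of $\supnorm{T_g(\cdot)}$ for some intertwining operator $T_g$ associated with a symplectic element $g$ representing $\alpha = Pg$. Since $T_g$ is an invertible linear endomorphism of $\schw$, it lets us transport any $\heis$-invariant norm to another one by pullback, and I will use this to reduce the hypothetical assertion for $\norm{}_\alpha$ to the same assertion for $\supnorm{}$.

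Concretely, given an $\heis$-invariant norm $\norm{}$ on $\schw$ dominated by $\norm{}_\alpha$, I would define
\[\norm{f}^{\ast}:=\norm{T_g^{-1}(f)},\]
which is a well-defined norm on $\schw$ since $T_g\colon\schw\to\schw$ is invertible. The key verification is that $\norm{}^{\ast}$ is again $\heis$-invariant. Using the intertwining relation $\rho_\psi([w,t])\circ T_g=T_g\circ\rho_\psi([wg,t])$, one rewrites $T_g^{-1}\circ\rho_\psi([w,t])=\rho_\psi([wg,t])\circ T_g^{-1}$, and then
\[\norm{\rho_\psi([w,t])f}^{\ast}=\norm{\rho_\psi([wg,t])T_g^{-1}(f)}=\norm{T_g^{-1}(f)}=\norm{f}^{\ast},\]
using $\heis$-invariance of $\norm{}$ at the second step.

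Next, I would transfer the domination hypothesis. Writing $\norm{}_\alpha=c\cdot\supnorm{T_g(\cdot)}$ for the normalization constant $c>0$, the inequality $\norm{}\le D\cdot\norm{}_\alpha$ pulled back through $T_g^{-1}$ becomes
\[\norm{f}^{\ast}=\norm{T_g^{-1}(f)}\le D\cdot\norm{T_g^{-1}(f)}_\alpha=Dc\cdot\supnorm{f},\]
so $\norm{}^{\ast}$ is dominated by $\supnorm{}$. Applying Theorem \ref{thm_strong_minimality} in the case of the sup norm (the hypothesis of the proposition) then yields $\norm{}^{\ast}=r\cdot\supnorm{}$ for some $r>0$. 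Translating back via $h:=T_g^{-1}(f)$ gives $\norm{h}=r\cdot\supnorm{T_g(h)}=(r/c)\cdot\norm{h}_\alpha$ for all $h\in\schw$, which is the desired conclusion.

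There is really no serious obstacle here; the whole point is that the family $\{\norm{}_\alpha\}_\alpha$ was constructed precisely so that the symplectic group acts transitively on it (modulo the stabilizer $P$), so any statement formulated invariantly under this action for one member of the family propagates to the entire family. The only mild care needed is to keep track of the normalization constant $c$ relating $\norm{}_\alpha$ to $\supnorm{T_g(\cdot)}$, but since the conclusion only asserts equality up to a positive scalar, this constant gets absorbed.
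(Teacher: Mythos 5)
Your proof is correct and follows essentially the same strategy as the paper: pull back $\norm{}$ through $T_g^{-1}$ to get a new $\heis$-invariant norm dominated by $\supnorm{}$, apply the rigidity result for the sup norm, and push forward again. The paper simply compresses the verification that $T_g^{-1}$ acts on $\invNorms$ and preserves order into one line (and absorbs the normalization constant $c$ into the choice of $T_g$), while you spell out the intertwining calculation explicitly; the mathematical content is the same.
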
 
    \begin{proof}
    Let $Pg=\alpha\in\Grassmannian$, where $g\in \mathrm{Sp}_{2d}(\Q_p)$, and let $T_g$ be an intertwining operator such that $\norm{}_\alpha=\supnorm{T_g(\cdot)}$.
    Let $\norm{}$ be an $\heis$-invariant norm on $\schw$, dominated by $\norm{}_\alpha$.
    
    The operators $T_g$ and $(T_g)^{-1}$ act on $\invNorms$ and preserve order.
    In particular, $\norm{(T_g)^{-1}(\cdot)}$ is an $\heis$-invariant norm, dominated by the sup norm.
    By assumption, $\norm{(T_g)^{-1}(\cdot)}=r\cdot \supnorm{}$ for some $r>0$.
    Thus, $\norm{}=r\cdot \norm{}_\alpha$.    
    \end{proof} 

Next we show that Theorem \ref{thm_strong_minimality} for the sup norm follows from Theorem \ref{thm_strong_minimality_Z_p}.
\begin{prop}
Assume that Theorem \ref{thm_strong_minimality_Z_p} holds.
Let $\norm{}$ be an $\heis$-invariant norm on $\schw$ that is dominated by the sup norm.
Then $\norm{}=r\cdot \supnorm{}$ for some $r>0$.
\end{prop}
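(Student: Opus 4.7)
The plan is to reduce from $\schw(\Q_p^d)$ to an exhaustion by compact-open balls $p^{-n}\Z_p^d$, apply Theorem \ref{thm_strong_minimality_Z_p} to each piece after a dilation, and then patch the resulting scalars using a single common test function.

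For each integer $n\geq 0$, let $U_n=p^{-n}\Z_p^d$ and consider the subspace $\schw(U_n)\subset \schw(\Q_p^d)$ of functions supported on $U_n$. Note that $\schw(\Q_p^d)=\bigcup_{n\geq 0}\schw(U_n)$. The subspace $\schw(U_n)$ is preserved by the action of the subgroup of $\heis$ generated by translations by elements of $U_n$ and by multiplication by all smooth characters $\psi(b\dotprod x)$ of $\Q_p^d$ (with $b\in\Q_p^d$). The restriction to $U_n$ of a smooth character of $\Q_p^d$ is a smooth character of $U_n$, and by Pontryagin duality every smooth character of $U_n$ arises this way.

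Next I would use the topological group isomorphism $\phi_n\colon \Z_p^d\xrightarrow{\sim}U_n$, $y\mapsto p^{-n}y$, which induces a linear isomorphism $\Phi_n\colon \schw(U_n)\xrightarrow{\sim}\schw(\Z_p^d)$ by $f\mapsto f\circ \phi_n$. Under $\Phi_n$, translations by $U_n$ pull back to translations by $\Z_p^d$, multiplication by smooth characters of $U_n$ pulls back to multiplication by smooth characters of $\Z_p^d$, and the sup norm on $\schw(U_n)$ coincides with the sup norm on $\schw(\Z_p^d)$. Therefore the pulled-back norm $\norm{\cdot}_n:=\norm{\Phi_n^{-1}(\cdot)}$ is a norm on $\schw(\Z_p^d)$ that is invariant under translations and multiplication by smooth characters, and is dominated by the sup norm (by our hypothesis on $\norm{}$). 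Theorem \ref{thm_strong_minimality_Z_p} then yields a constant $r_n>0$ with $\norm{\cdot}_n=r_n\cdot\supnorm{}$; transporting back, $\norm{f}=r_n\cdot \supnorm{f}$ for every $f\in\schw(U_n)$.

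Finally I would patch: the characteristic function $\textbf{1}_{\Z_p^d}$ lies in $\schw(U_n)$ for every $n\geq 0$ and has sup norm $1$, so $r_n=\norm{\textbf{1}_{\Z_p^d}}$ is independent of $n$. Setting $r:=\norm{\textbf{1}_{\Z_p^d}}$ and using $\schw(\Q_p^d)=\bigcup_n\schw(U_n)$ gives $\norm{f}=r\cdot\supnorm{f}$ for every $f\in\schw$, as required.

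The only non-cosmetic point in this argument is checking that the full smooth character group of $U_n$ is accessed by restricting smooth characters of $\Q_p^d$ (so that the invariance hypothesis of Theorem \ref{thm_strong_minimality_Z_p} genuinely holds on each $\schw(U_n)$), and that the dilation $\phi_n$ intertwines these actions with translations by $\Z_p^d$ and characters of $\Z_p^d$; both follow routinely from the self-duality of $(\Q_p^d,+)$ via the choice of $\psi$. Apart from this bookkeeping, the proof is a simple scalar-patching argument, and I do not expect any real obstacle.
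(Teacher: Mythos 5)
Your proof is correct and follows exactly the approach taken in the paper: restrict $\norm{}$ to the filtration by $\schw(p^{-n}\Z_p^d)$, apply Theorem \ref{thm_strong_minimality_Z_p} after the dilation $y\mapsto p^{-n}y$, and patch the resulting scalars $r_n$ using the common test function $\textbf{1}_{\Z_p^d}$. You merely spell out the ``obvious change of variables'' that the paper leaves implicit, including the duality check on smooth characters, which is fine but not a departure in substance.
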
 
    \begin{proof}
    For any $n\in\N$ we denote $V_n=\schw(p^{-n}\Z_p^d)$ and think about $V_n$ as the subspace of $\schw(\Q_p^d)$ of functions supported on the disc $p^{-n}\Z_p^d$.
    The restriction of $\norm{}$ to $V_n$ is invariant under translations by $p^{-n}\Z_p^d$ and multiplication by smooth characters.
    By Theorem \ref{thm_strong_minimality_Z_p} and an obvious change of variables, there exists $r_n>0$ such that $\norm{f}=r_n\cdot\supnorm{f}$ for any $f\in V_n$.
    The function $\textbf{1}_{\Z_p^d}(x)$ lies in any of the $V_n$, so the numbers $(r_n)_{n\in\N}$ must be equal to the same $r$.
    Then $\norm{f}=r\cdot \supnorm{f}$ for any compactly supported function $f$.
    \end{proof}   
    
\begin{prop}
If Theorem \ref{thm_strong_minimality_Z_p} holds for $\Z_p$ then it holds for $\Z_p^d$ for any $d$.
\end{prop}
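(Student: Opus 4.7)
I proceed by induction on $d$, with the base case $d=1$ being the hypothesis. For the inductive step, let $\norm{}$ be a norm on $\schw(\Z_p^d)$ satisfying the hypotheses, and identify $\schw(\Z_p^d) = \schw(\Z_p) \otimes_{\C_p} \schw(\Z_p^{d-1})$ as an algebraic tensor product, where $g \otimes h$ corresponds to $(x_1, y) \mapsto g(x_1) h(y)$.

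I first evaluate $\norm{}$ on simple tensors. For nonzero $g \in \schw(\Z_p)$, the map $h \mapsto \norm{g \otimes h}$ defines a norm on $\schw(\Z_p^{d-1})$ that is invariant under translations and smooth characters of $\Z_p^{d-1}$ (lifted to act on the last $d-1$ coordinates of $\Z_p^d$) and dominated by $\supnorm{g}\cdot\supnorm{h}$. The inductive hypothesis yields $c(g) > 0$ with $\norm{g \otimes h} = c(g)\supnorm{h}$. The function $c$ is itself a norm on $\schw(\Z_p)$ (positivity, homogeneity, and the ultrametric inequality descend from those of $\norm{}$), is invariant under translations and smooth characters of $\Z_p$ (via the analogous lifts to the first coordinate), and is dominated by $\supnorm{}$ (taking $h = \textbf{1}_{\Z_p^{d-1}}$). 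Applying the base case produces $r > 0$ with $c(g) = r\supnorm{g}$, so $\norm{g \otimes h} = r\cdot\supnorm{g \otimes h}$ for every simple tensor.

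Every $f \in \schw(\Z_p^d)$ lies in the finite-dimensional space $V_n$ of $p^n\Z_p^d$-periodic functions for some $n$, and expands as $f = \sum_a f(a)\textbf{1}_{a+p^n\Z_p^d}$ with $a$ running over $(\Z_p/p^n)^d$. Each indicator $\textbf{1}_{a+p^n\Z_p^d}$ is a product of one-variable indicators, hence a simple tensor of $\norm{}$-norm $r$, and the strong triangle inequality immediately gives $\norm{f} \leq r\supnorm{f}$.

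The main obstacle is the reverse inequality $\norm{f} \geq r\supnorm{f}$. Here the key observation is that $V_n$ admits a second natural family of vectors of $\norm{}$-norm $r$: the smooth characters $\chi$ of $\Z_p^d$ of conductor dividing $p^n$, viewed as elements of $V_n$ via $\chi = \chi \cdot \textbf{1}_{\Z_p^d}$, so that smooth-character invariance applied to $\textbf{1}_{\Z_p^d}$ gives $\norm{\chi} = r$. Picking $a_0$ with $|f(a_0)| = \supnorm{f}$, the Fourier identity
\[ f(a_0)\cdot\textbf{1}_{a_0+p^n\Z_p^d} = \frac{1}{p^{nd}}\sum_{\chi}\chi^{-1}(a_0)\,(\chi\cdot f) \]
combined with a naive ultrametric estimate on its right-hand side yields only $\norm{f} \geq r\supnorm{f}/p^{nd}$, losing a factor of $p^{nd}$. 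Removing this loss is the delicate part, and I expect it to come from using the simple-tensor identity of the first step iteratively along the decomposition $V_n = V_n^{(1)} \otimes V_n^{(d-1)}$ (slicing $f = \sum_a f^{(a)} \otimes \textbf{1}_{a+p^n\Z_p^{d-1}}$), so that only the $d=1$ base case is invoked in each coordinate to bridge the position and character bases without incurring the Fourier-inversion loss.
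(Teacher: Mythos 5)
Your first half is correct and actually takes a cleaner route than the paper: you use the tensor decomposition $\schw(\Z_p^d) \simeq \schw(\Z_p)\otimes\schw(\Z_p^{d-1})$ to pin down $\norm{}$ on simple tensors, apply the inductive hypothesis and then the base case to get $\norm{g\otimes h}=r\supnorm{g\otimes h}$, and finally the strong triangle inequality on the indicator expansion of $f$ to conclude $\norm{f}\leq r\supnorm{f}$. (The paper instead chains two pullbacks $P_d^*$ and $\beta^*$ along coordinate projections to establish $\norm{\textbf{1}_{p^n\Z_p^d}}=1$, reaching the same inequality.) These are genuinely different decompositions and both are fine.

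The reverse inequality $\norm{f}\geq r\supnorm{f}$, however, is a real gap in your write-up, and the Fourier-slicing idea you sketch is not the right tool: as you observe, character inversion loses a factor of $p^{nd}$, and the simple-tensor identity does not control a general $f$ from below because the ultrametric inequality only bounds sums from above. What you are missing is that no direct argument is needed here at all. You have shown $\frac{1}{r}\norm{}\leq\supnorm{}$, and $\textbf{1}_{\Z_p^d}$ is a simple tensor, so $\frac{1}{r}\norm{\textbf{1}_{\Z_p^d}}=1=\supnorm{\textbf{1}_{\Z_p^d}}$. Proposition \ref{prop_weak_minimality} says precisely that the sup norm on $C(G)$ for a pro-$p$ group $G$ is weakly minimal at $\textbf{1}$, i.e.\ no strictly smaller translation-invariant norm can agree with it at $\textbf{1}$. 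Applying it to $G=\Z_p^d$ forces $\frac{1}{r}\norm{}=\supnorm{}$, which is exactly the conclusion. (This is also how the paper closes the argument; the hard content for the reverse direction is the representation-theoretic fact that a finite $p$-group over $\overline{\fld{F}}_p$ always has a nonzero invariant vector, which is already packaged into Proposition \ref{prop_weak_minimality}.) With that citation, your proof is complete and self-contained; without it, your step 4 is unresolved and cannot be fixed by purely elementary ultrametric estimates on Fourier expansions.
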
 
    \begin{proof}
    The proof is by induction, the case $d=1$ being assumed to be true.
    Let $d>1$ and assume that Theorem \ref{thm_strong_minimality_Z_p} holds for $d-1$.
    Let $\norm{}$ be a norm on $\schw(\Z_p^d)$ that is invariant under translations and multiplication by smooth characters, dominated by the sup norm and normalized on $\textbf{1}_{\Z_p^d}(x)$.
    By Proposition \ref{prop_weak_minimality} it is enough to show that $\norm{}\leq \supnorm{}$.
    The latter follows if we show that for any $n$, $\norm{\textbf{1}_{p^n\Z_p^d}(x)}=1$, where $\textbf{1}_{p^n\Z_p^d}(x)$ is the characteristic function $p^n\Z_p^d$.
    
    Let $0<n\in \N$.
    Let $P_d$ by the projection $\alpha:\Z_p^d\map \Z_p$ given by $\alpha(a_1,...,a_d)=a_d$, and denote by $P_d^*$ the induced map $P_d^*:\schw(\Z_p)\map \schw(\Z_p^d)$.
    It is easy to see that the norm $\norm{P_d^*(\cdot)}$ on $\schw(\Z_p)$ is invariant under translations and multiplication by smooth characters, dominated by the sup norm and normalized at $\textbf{1}_{\Z_p}(x)$.
    Thus, $\norm{P_d^*(f)}=\supnorm{f}$ for any $f\in \schw(\Z_p)$.
    In particular,
    \[\norm{P_d^*(\textbf{1}_{p^n\cdot\Z_p}(x))}=1,\]
    Note that
    \[P_d^*(\textbf{1}_{p^n\cdot\Z_p}(x))=\textbf{1}_{\Z_p^{d-1}\times (p^n\cdot\Z_p)}(x).\]
    
    Now, consider the Projection $\beta:\Z_p^{d-1}\times (p^n\cdot \Z_p)\map\Z_p^{d-1}$ given by $\beta(a_1,...,a_{d-1},a_d)=(a_1,..,a_{d-1})$, and the induced map $\beta^*:C(\Z_p^{d-1})\map C(\Z_p^d)$.
    By the previous lemma, the norm $\norm{\beta^*(\cdot)}$ on $\schw(\Z_p^{d-1})$ is invariant under translations and multiplication by smooth character and dominated by the sup norm.
    Since 
    \[\beta^*(\textbf{1}_{\Z_p^{d-1}}(x))=\textbf{1}_{\Z_p^{d-1}\times(p^n\cdot \Z_p)}(x)\]
    and since $\norm{\textbf{1}_{\Z_p^{d-1}\times(p^n\cdot \Z_p)}(x)}=1$, we deduce that $\norm{\beta^*(\cdot)}$ is normalized on $\textbf{1}_{\Z_p^{d-1}}$.
    Therefore $\norm{\beta^*(\cdot)}=\supnorm{}$.
    In particular,
    \[\norm{\beta^*(\textbf{1}_{p^n\cdot \Z_p^{d-1}}(x))}=1.\]
    Note that 
    \[\beta^*(\textbf{1}_{p^n\cdot \Z_p^{d-1}}(x))=\textbf{1}_{p^n\cdot \Z_p^d}(x).\]
    Thus, we proved that $\norm{\textbf{1}_{p^n\cdot \Z_p^d}(x)}=1$.
    \end{proof} 

It remains to prove Theorem \ref{thm_strong_minimality_Z_p} for $\Z_p$.
This is done in the next section.

\section{Proof of Theorem \ref{thm_strong_minimality_Z_p} for $\mathbb{Z}_p$}

In this section we prove Theorem \ref{thm_strong_minimality_Z_p} for $\Z_p$.
We begin by noting that in the formulation of Theorem \ref{thm_strong_minimality_Z_p}, the space $\schw(\Z_p)$ can be replaced by $C(\Z_p)$, which is its completion in the sup norm.
In this section we will work with $C(\Z_p)$ since this allows us to use functions, such as polynomials, which are not in $\schw(\Z_p)$.

Clearly, Theorem \ref{thm_strong_minimality_Z_p} follows if we know that $\supnorm{}$ is both weakly minimal and locally maximal at $\textbf{1}_{\Z_p}(x)$.
That the sup norm is weakly minimal at $\textbf{1}_{\Z_p}(x)$ follows from Proposition \ref{prop_weak_minimality}.
Thus, it remains to show local maximality.
        
The proof uses two main ingredients:
    \begin{enumerate}
    \item The \textbf{growth modulus} of a norm.
    This is a real valued function associated with norms on $C(\Z_p)$ that are dominated by the sup norm.
    \item The \textbf{$q$-Mahler bases}.
    To each $q\in \C_p$ with $\pabs{q-1}<1$, there corresponds a basis of $C(\Z_p)$ called the $q$-Mahler basis which shares some nice properties with the Mahler basis: $\braces*{\binom{x}{n}\ |\ n\geq 0}$.
    The $q$-Mahler bases can be viewed as a family of deformations of the Mahler basis.
    \end{enumerate}

\subsection{The growth modulus of a norm}
The beginning of this section is an adaptation of \cite{robert2013course}, chapter $6$, part $1.4$.

Let $(a_n)_{n=0}^\infty$ be a bounded sequence of non-negative real numbers.
The growth modulus associated with the sequence $(a_n)_{n=0}^\infty$ is the function
\[r\mapsto\sup_na_nr^n\]
defined on the interval $[0,1]$.
It is a continuous, non-decreasing and convex function (part of the \textbf{Classical Lemma} on \cite{robert2013course}, p.292).

We say that a real number $0<r<1$ is regular with respect to the sequence $(a_n)_{n=0}^\infty$ if there exists $n$ such that $a_nr^n>a_mr^m$ for any $m\neq n$.
Otherwise, we call $r$ a critical value (with respect to the sequence).

We remark that if $r$ is a regular value and $a_nr^n>a_mr^m$ for any $m\neq n$, there exists some interval containing $r$ on which the growth modulus is equal to $a_nx^n$.
In particular the growth modulus is smooth at the regular values.

The fundamental lemma about critical values is the following.
\begin{lem}\label{discreteness_of_critical_values}
Assume that $(a_n)_{n=0}^\infty$ is not the zero sequence.
The set of critical values is discrete in $[0,1)$. 
\end{lem}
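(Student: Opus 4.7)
The plan is to argue by contradiction. Suppose some $r\in[0,1)$ is an accumulation point of critical values, and pick distinct critical values $r_k\in(0,1)$ with $r_k\to r$. By the definition of ``critical,'' for each $k$ there are indices $n_k>m_k$ with $a_{n_k}r_k^{n_k}=a_{m_k}r_k^{m_k}=\mu(r_k)$, where $\mu(r):=\sup_n a_n r^n$. Since $r_k>0$ and $(a_n)$ is not identically zero, $\mu(r_k)>0$, so $a_{n_k},a_{m_k}>0$, and solving the equation $a_{n_k}r^{n_k}=a_{m_k}r^{m_k}$ pins down $r_k=(a_{m_k}/a_{n_k})^{1/(n_k-m_k)}$, the unique solution in $(0,1)$ for that pair.

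The heart of the argument is a uniform bound on the pairs $(n_k,m_k)$. Write $M:=\sup_n a_n<\infty$. If $r>0$, continuity of $\mu$ (part of the classical lemma already cited) gives $\mu(r_k)\to\mu(r)>0$, so for large $k$ the inequality $a_{n_k}r_k^{n_k}\geq \mu(r)/2$ combined with $a_{n_k}\leq M$ and with $r_k\leq s$ for some fixed $s\in(r,1)$ yields the explicit bound $n_k\leq \log(2M/\mu(r))/\log(1/s)$. If instead $r=0$, let $N_0:=\min\{n:a_n>0\}$; since $a_{m_k}>0$ we have $m_k\geq N_0$, and from $\mu(r_k)\geq a_{N_0}r_k^{N_0}$ one derives $r_k^{j_k}\geq a_{N_0}/M$ with $j_k=m_k-N_0$ (or $j_k=n_k-N_0$ in the subcase $m_k=N_0$). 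Since $a_{N_0}/M\in(0,1]$, the map $j\mapsto (a_{N_0}/M)^{1/j}$ is minimized at $j=1$, so this forces $r_k\geq a_{N_0}/M>0$, contradicting $r_k\to 0$.

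Hence $r>0$ and the pairs $(n_k,m_k)$ are uniformly bounded. Passing to a subsequence I may assume $(n_k,m_k)\equiv(n,m)$ is constant with $n\neq m$, and then every $r_k$ solves the single equation $a_n r^n=a_m r^m$, which has at most one solution in $(0,1)$; this contradicts the distinctness of the $r_k$. I expect the only real bookkeeping nuisance to be the $r=0$ case, since it requires separating the subcases $m_k=N_0$ and $m_k>N_0$ and recognizing that $(a_{N_0}/M)^{1/j}$ has a positive lower bound over $j\geq 1$; the case $r>0$ is a routine Newton-polygon style estimate.
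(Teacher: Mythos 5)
Your proof is correct, but it takes a genuinely different route from the paper's. Both arguments rest on the same core observation: at a critical value $s$, the supremum $\mu(s)=\sup_n a_n s^n$ is attained at (at least) two indices, and $s$ is then determined by that pair via $s=(a_i/a_j)^{1/(j-i)}$; so it suffices to bound the maximizing indices uniformly near a putative accumulation point. The paper gets this bound directly and uniformly: fix $r\in(0,1)$, let $n$ be a maximizing index for $\mu(r)$, and observe that $a_n r^n\geq a_N r^N$ together with $0<s<r$ and $N>n$ forces $a_N/a_n\leq r^{n-N}<s^{n-N}$, i.e.\ $a_n s^n>a_N s^N$. This one-line comparison caps the maximizing indices at $n$ for \emph{every} $s<r$ simultaneously, so there are only finitely many critical values in $(0,r)$, and the endpoint $0$ is handled with no extra work. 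Your version proceeds by contradiction, extracts a sequence $r_k\to r$, uses continuity of $\mu$ (cited but not actually needed in the paper's proof) to bound the indices when $r>0$, treats $r=0$ by a separate two-subcase computation, and then passes to a constant subsequence; all of this is sound, but it is longer and has more moving parts. The paper's inequality is the same ``Newton-polygon style'' comparison you flag as routine, only applied once at the fixed radius $r$ rather than along the sequence, which is what lets it dispense with continuity of $\mu$, the $r=0$ case split, and the compactness step.
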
 
    \begin{proof}
    Let $0<r<1$.
    We will show that there are only finitely many critical values smaller than $r$.
    Let $n$ be such that $a_nr^n\geq a_mr^n$ for any $m$.
    Note that $a_n\neq 0$.
    Let $0<s<r$.
    Then for any $N>n$
    \[a_nr^n\geq a_Nr^N \gorer \frac{a_N}{a_n}\leq r^{n-N}<s^{n-N} \gorer a_ns^n>a_Ns^N.\]
    Thus, if $s<r$ is a critical value and $i,j\in\Z_{\geq0}$ are such that $a_is^i=a_js^j\geq a_ks^k$ for any $k$, then $i,j\leq n$.
    In this case $a_j\neq 0$ and
    \[s=\sog{\frac{a_i}{a_j}}^{\frac{1}{j-i}},\ \ \ i,j\leq n.\]
    There are only finitely many such values.   
    \end{proof} 

Until the end of this subsection, fix a norm $\norm{}$ on $C(\Z_p)$ that is dominated by the sup norm and normalized at $\textbf{1}_{\Z_p}(x)$.
Let $\binom{x}{n}$ be the $n$-th binomial polynomial.
Under the assumptions on $\norm{}$, the sequence $\sog{\norm{\binom{x}{n}}}_{n\geq 0}$ is bounded. 
We define the growth modulus of the norm $\norm{}$ to be the growth modulus of that sequence.
We denote the growth modulus of $\norm{}$ by $G_{\norm{}}(r)$.
Explicitly, $G_{\norm{}}(r):[0,1]\map\R$ is the function 
\[G_{\norm{}}(r)=\sup_{n\geq 0}\sog{\norm{\binom{x}{n}}\cdot r^n}.\]
We call $r\in[0,1]$ a regular (resp. critical) value for the norm $\norm{}$ if it is regular (resp. critical) with respect to the sequence $\sog{\norm{\binom{x}{n}}}_{n\geq 0}$.

The connection between the growth modulus of $\norm{}$ and the study of the norm itself comes from the work of Mahler.
We recall the basic facts about the Mahler basis.

\begin{thm}[Mahler, \cite{Mahler}]
Any $f\in C(\Z_p)$ can be written as 
\[f(x)=\sum_{n=0}^\infty a_n\cdot \binom{x}{n}\]
where $\limit{n}a_n=0$ and the sum converges to $f$ in the sup norm.
Moreover, $\supnorm{f}=\max_{n}\pabs{a_n}$.
\end{thm}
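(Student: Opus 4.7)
My plan is to prove Mahler's theorem via the finite difference operator $\Delta : C(\Z_p) \to C(\Z_p)$ defined by $(\Delta f)(x) = f(x+1) - f(x)$. Since $\Delta$ is the difference of a translation and the identity, both of which are isometries for $\supnorm{}$, the strong triangle inequality gives $\supnorm{\Delta f} \leq \supnorm{f}$. For $f \in C(\Z_p)$ I set $a_n := (\Delta^n f)(0)$ and aim to show that $f(x) = \sum_{n \geq 0} a_n \binom{x}{n}$ with $\pabs{a_n} \to 0$ and $\supnorm{f} = \max_n \pabs{a_n}$.

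The first routine step is a purely combinatorial identity: for every non-negative integer $m$,
\[
f(m) \;=\; \sum_{n=0}^{m} a_n \binom{m}{n},
\]
which is Newton's forward difference formula and can be verified by induction on $m$ together with the Pascal-like identity $\binom{m+1}{n} = \binom{m}{n} + \binom{m}{n-1}$. Note that the right-hand side is a finite sum for each $m \in \N$, so no convergence issue arises yet.

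The heart of the argument, and what I expect to be the main obstacle, is proving $\limit{n}\supnorm{\Delta^n f} = 0$. The key observation is that by the binomial theorem for operators,
\[
\Delta^{p^k} f(x) \;=\; \sum_{j=0}^{p^k} (-1)^{p^k-j}\binom{p^k}{j} f(x+j),
\]
and for $0 < j < p^k$ the $p$-adic valuation of $\binom{p^k}{j}$ is strictly positive. Writing this as $\Delta^{p^k} f(x) = f(x+p^k) - f(x) + (\text{terms with coefficients divisible by }p)$ and using the uniform continuity of $f$ on the compact set $\Z_p$, one sees that $\supnorm{\Delta^{p^k}f} \to 0$ as $k \to \infty$. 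Combined with the monotonicity $\supnorm{\Delta^{n+1} f} \leq \supnorm{\Delta^n f}$, this yields the desired convergence along the full sequence $n$. In particular $\pabs{a_n} \leq \supnorm{\Delta^n f} \to 0$.

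Once $\pabs{a_n} \to 0$ is established, $\sum_n a_n \binom{x}{n}$ converges uniformly on $\Z_p$ (since $\supnorm{\binom{x}{n}} \leq 1$) to a continuous function agreeing with $f$ on the dense subset $\N \subset \Z_p$, hence equal to $f$. For the isometry, the strong triangle inequality gives $\supnorm{f} \leq \max_n \pabs{a_n}$, while the bound $\pabs{a_n} = \pabs{(\Delta^n f)(0)} \leq \supnorm{\Delta^n f} \leq \supnorm{f}$ supplies the reverse inequality; uniqueness of the expansion is immediate from recovering $a_n$ as $(\Delta^n f)(0)$.
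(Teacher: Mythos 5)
The paper cites Mahler's theorem without proof, so there is no in-paper argument to compare against. Your framework --- Newton's formula $f(m)=\sum_{n\leq m}a_n\binom{m}{n}$, uniform convergence once $\pabs{a_n}\to 0$, and the isometry via $\pabs{a_n}\leq\supnorm{\Delta^n f}\leq\supnorm{f}$ --- is sound, but the step you flag as the heart of the argument contains a genuine gap.

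You assert that $\supnorm{\Delta^{p^k}f}\to 0$ as $k\to\infty$, and justify it by noting that $\binom{p^k}{j}$ is divisible by $p$ for $0<j<p^k$ while the boundary contribution $f(x+p^k)-f(x)$ shrinks by uniform continuity. But divisibility by $p$ only gives $\pabs{\binom{p^k}{j}f(x+j)}\leq p^{-1}\supnorm{f}$, hence
\[
\supnorm{\Delta^{p^k}f}\leq\max\sog{\sup_{x\in\Z_p}\pabs{f(x+p^k)-f(x)},\ p^{-1}\supnorm{f}}.
\]
The first argument of the max tends to $0$, but the second is a fixed positive constant independent of $k$, and this bound is sharp in the relevant sense: $v_p\sog{\binom{p^k}{p^{k-1}}}=1$ for every $k\geq 1$. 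From the stated ingredients you can therefore only conclude $\supnorm{\Delta^{p^k}f}\leq p^{-1}\supnorm{f}$ for $k$ large, not that the limit is zero.

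There are two standard repairs. One is to iterate the $p^{-1}$ estimate: apply the same reasoning to $g=\Delta^{p^k}f$ to get $\supnorm{\Delta^{p^{k'}}g}\leq p^{-1}\supnorm{g}\leq p^{-2}\supnorm{f}$ for suitable $k'$, and so on; combined with the monotonicity $\supnorm{\Delta^{n+1}f}\leq\supnorm{\Delta^n f}$ this gives $\limit{n}\supnorm{\Delta^n f}=0$. This iteration is exactly the mechanism the paper uses in its proof of the $q$-analog (Theorem \ref{q_Mahler}): one reduces the unit ball modulo functions of norm $\leq r=\pabs{q-1}$, observes that $T=\Delta/q^x$ reduces to $\Delta$, uses that $\Delta^{p^N}$ kills the (locally constant) reduction, and iterates the resulting bound $\supnorm{T^{p^N}f}\leq r\cdot\supnorm{f}$. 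The other repair is to invoke the sharper identity $v_p\sog{\binom{p^k}{j}}=k-v_p(j)$ for $0<j\leq p^k$: writing $\Delta^{p^k}f(x)=\sum_{j}(-1)^{p^k-j}\binom{p^k}{j}\sog{f(x+j)-f(x)}$, each term has absolute value at most $p^{-(k-v_p(j))}\cdot\omega\sog{p^{-v_p(j)}}$ where $\omega$ is the modulus of continuity of $f$, and splitting the index set by whether $v_p(j)$ is small or large yields $\supnorm{\Delta^{p^k}f}\to 0$ directly, without iteration.
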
 

The following proposition immediately follows.
\begin{prop}
Let $M$ be the smallest number such that $\norm{}\leq M\cdot \supnorm{}$.
Then $M=G_{\norm{}}(1)$.
\end{prop}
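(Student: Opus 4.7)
The plan is to prove the two inequalities $M\geq G_{\norm{}}(1)$ and $M\leq G_{\norm{}}(1)$ separately, using Mahler's theorem as the main tool.

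For the inequality $M\geq G_{\norm{}}(1)$, the crucial input is that the binomial polynomials satisfy $\supnorm{\binom{x}{n}}=1$: indeed $\binom{x}{n}$ takes values in $\Z_p$ on $\Z_p$ (by density of $\N$ in $\Z_p$ and the integrality of the classical binomial coefficients), and attains the value $1$ at $x=n$. Since $\norm{}\leq M\cdot \supnorm{}$ we obtain $\norm{\binom{x}{n}}\leq M$ for every $n$, and taking the supremum over $n$ gives $G_{\norm{}}(1)\leq M$.

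For the reverse inequality $M\leq G_{\norm{}}(1)$, I would use Mahler's theorem to expand any $f\in C(\Z_p)$ as $f(x)=\sum_{n=0}^\infty a_n\binom{x}{n}$ with $a_n\to 0$ and $\supnorm{f}=\max_n \pabs{a_n}$. Because $\norm{}$ is dominated by $\supnorm{}$, the partial sums converge to $f$ also in $\norm{}$, so by continuity of $\norm{}$ together with the strong triangle inequality,
\[
\norm{f}\leq \sup_n \pabs{a_n}\cdot \norm{\binom{x}{n}}\leq \supnorm{f}\cdot \sup_n \norm{\binom{x}{n}}=\supnorm{f}\cdot G_{\norm{}}(1).
\]
This shows that $G_{\norm{}}(1)$ is a valid choice of constant dominating $\supnorm{}$, hence $M\leq G_{\norm{}}(1)$.

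There is no real obstacle here: the argument is a clean two-line application of Mahler's theorem once the normalization $\supnorm{\binom{x}{n}}=1$ is noted. The only small subtlety is justifying that the Mahler series converges in $\norm{}$, which is immediate from the hypothesis that $\norm{}$ is dominated by the sup norm.
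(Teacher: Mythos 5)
Your proof is correct and is exactly the argument the paper has in mind when it states that the proposition ``immediately follows'' from Mahler's theorem: the lower bound $G_{\norm{}}(1)\leq M$ from $\supnorm{\binom{x}{n}}=1$, and the upper bound from the Mahler expansion together with the strong triangle inequality. Nothing further to add.
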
 
We conclude this subsection with the following proposition.
\begin{prop}\label{powers_and_growth_modulus}
Let $\norm{}$ be a norm on $C(\Z_p)$ dominated by the sup norm.
Let $q\in \C_p $ with $r:=\abs{q-1}_p<1$.
Then $\norm{q^x}\leq G_{\norm{}}(r)$.
Moreover, if $r$ is a regular value for the norm $\norm{}$ then
\[\norm{q^x}=G_{\norm{}}(r).\]
\end{prop}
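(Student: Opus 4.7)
The plan is to expand $q^x$ in the Mahler basis and then apply the non-archimedean triangle inequality. The key identity is
\[q^x = \sum_{n=0}^{\infty}(q-1)^n\cdot \binom{x}{n},\]
which holds in $C(\Z_p)$: for each non-negative integer $k$ the usual binomial theorem gives $q^k = (1+(q-1))^k = \sum_{n=0}^{k}\binom{k}{n}(q-1)^n$, and since $\pabs{q-1}<1$ the coefficient sequence $a_n = (q-1)^n$ tends to zero, so by Mahler's theorem this is the Mahler expansion of the continuous function $q^x$.

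First I would verify that the series converges in the norm $\norm{}$. Since $\norm{}$ is dominated by the sup norm, the sequence $\norm{\binom{x}{n}}$ is bounded, so
\[\norm{(q-1)^n\cdot \binom{x}{n}} = r^n\cdot \norm{\binom{x}{n}} \;\longrightarrow\; 0,\]
which guarantees convergence in $\norm{}$ and moreover allows us to apply the strong triangle inequality term by term. This immediately yields
\[\norm{q^x} \leq \sup_{n\geq 0}\sog{r^n\cdot \norm{\binom{x}{n}}} = G_{\norm{}}(r),\]
which is the first assertion.

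For the moreover part, assume $r$ is regular for $\norm{}$, so there is a unique index $n_0$ with $\norm{\binom{x}{n_0}}\cdot r^{n_0} > \norm{\binom{x}{m}}\cdot r^m$ for every $m\neq n_0$. Write $q^x = T + R$ where $T = (q-1)^{n_0}\binom{x}{n_0}$ and $R = \sum_{n\neq n_0}(q-1)^n\binom{x}{n}$. Then $\norm{T} = r^{n_0}\cdot \norm{\binom{x}{n_0}} = G_{\norm{}}(r)$, while the strong triangle inequality gives $\norm{R}\leq \sup_{n\neq n_0} r^n\cdot\norm{\binom{x}{n}} < \norm{T}$. Since one term strictly dominates in the non-archimedean sense, $\norm{q^x} = \norm{T+R} = \norm{T} = G_{\norm{}}(r)$.

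There is no real obstacle here beyond justifying the Mahler expansion of $q^x$; after that, everything is a routine application of the ultrametric inequality, using the uniqueness of the dominant term precisely at regular values of $r$.
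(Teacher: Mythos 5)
Your proof is correct and follows the same approach as the paper: expand $q^x$ in the Mahler basis, apply the ultrametric triangle inequality to get the upper bound $G_{\norm{}}(r)$, and at a regular value isolate the strictly dominant term $(q-1)^{n_0}\binom{x}{n_0}$ to turn the inequality into an equality. Your write-up is a bit more explicit about convergence in $\norm{}$ and about why the remainder $R$ has norm strictly below $G_{\norm{}}(r)$ (this uses that $r^n\norm{\binom{x}{n}}\to 0$, so the supremum over $n\neq n_0$ is attained and hence strictly smaller), but there is no substantive difference from the paper's argument.
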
 
    \begin{proof}
    This is a simple consequence of the non-archimedean triangle inequality.
    Using the Mahler expansion of the function $q^x$
        \begin{equation}\label{equation_growth_modulus_proof}
        \norm{q^x}=\norm{\sum_{k=0}^\infty(q-1)^k\binom{x}{k}}\leq \sup_{k \geq 0} \sog{\abs{q-1}_p^k\norm{\binom{x}{k}}}=G_{\norm{}}(r).  
        \end{equation} 
    If $r=\abs{q-1}_p$ is a regular value, there exists $n\geq 0$ such that 
    \[r^n\norm{\binom{x}{n}}>r^m\norm{\binom{x}{m}}\]
    for any $m\neq n$, and therefore we have an equality instead of inequality in \ref{equation_growth_modulus_proof}.
    \end{proof}
    \begin{exmp}
    $\ $
        \begin{enumerate}
        \item The growth modulus of the sup norm is constant $G_{\norm{}}(r)=1$.
        \item Assume that $\norm{}$ is invariant by multiplication by smooth characters and that $G_{\norm{}}(1)>1$.
        Then, for any $N$ large enough and $\zeta$ a root of unity of order $p^N$, $r=\pabs{\zeta-1}$ is a critical value for the norm $\norm{}$.
        Indeed, for $N$ large enough, $G_{\norm{}}(r)>1$ while $\norm{\zeta^x}=1$, so $r$ is a critical value by the previous proposition.
        \end{enumerate}
    \end{exmp} 

    \begin{remark}
    In general, $\norm{q^x}$ is not a function of $\abs{1-q}_p$, i.e. it might be the case that $\norm{q_1^x}\neq\norm{q_2^x}$ while $\abs{q_1-1}_p=\abs{q_2-1}_p$.
    \end{remark} 

\subsection{$q$-Mahler bases}
We briefly recall the $q$-analog terminology, the $q$-Mahler bases and the expansion formula for exponents in these bases.
This subsection is self contained.
For a more thorough exposition to the $q$-analog formalism and its properties we refer to \cite{q_calculus,special_functions}.
For more on the $q$-analog of the Mahler basis see \cite{conrad2000q}.

Let $q$ be an indeterminate.
The $q$-analog of the natural number $n$ is the following expression in $\Z[q]$
\[[n]_q=\frac{1-q^n}{1-q}=1+q+...+q^{n-1}.\]
The $q$-analog of the factorial of $n$ is 
\[[n]_q!=[1]_q\cdot [2]_q\cdot ....\cdot [n]_q\]
and the $q$-binomial coefficients, also known as Gaussian binomial coefficients, are defined by the analogous formula
\[\qbinom{n}{k}{q}=\frac{[n]_q!}{[k]_q!\cdot [n-k]_q!}\]
whenever $0\leq k\leq n$, and zero otherwise.
The $q$-Pochhammer symbol is the expression
\[(a;q)_n=\prod_{i=0}^{n-1}(1-aq^i).\]
When $a=q$ we get
\[(q;q)_n=\prod_{i=1}^{n}(1-q^i).\]
By expanding the terms in the definition, it is easy to verify that
\[\qbinom{n}{k}{q}=\frac{(q;q)_n}{(q;q)_{k}(q;q)_{n-k}}.\]

The $q$-Pascal identity
\begin{equation}\label{q_Pascal}
\qbinom{n+1}{k+1}{q}=\qbinom{n}{k+1}{q}+q^{n-k}\cdot\qbinom{n}{k}{q},
\end{equation}
implies, by induction, that $\qbinom{n}{k}{q}$ is a polynomial in $q$ with integer coefficients.

From now on $q$ will not be an indeterminate but an element in $\C_p$ such that $\abs{q-1}_p<1$.
The map $n\mapsto \qbinom{n}{k}{q}$ is continuous with respect to the $p$-adic topologies on $\Z$ and on $\C_p$, and therefore extends to a map $x\mapsto \qbinom{x}{k}{q}$ that lies in $C(\Z_p)$.

Since for any $x\in \N$ the expression $\qbinom{x}{k}{q}$ is a polynomial with integral coefficients in $q$, we have $\pabs{\qbinom{x}{k}{q}}\leq 1$.
By continuity, $\supnorm{\qbinom{x}{k}{q}}\leq 1$.
Substituting $x=k$ we see that 
\[\supnorm{\qbinom{x}{k}{q}}=1.\]

Note that if  $q$ is not a root of unity the term $(q;q)_k$ is non-zero for any $k$, so
\[\qbinom{x}{k}{q}=\frac{(1-q^{x-(k-1)})\cdot(1-q^{x-(k-2)})\cdot...\cdot(1-q^x)}{(1-q)(1-q^2)...(1-q^k)}.\]

We will need two results about $q$-binomial functions.
The first is the $q$-analog of Mahler's theorem.
The second is the expansion of an exponent $\zeta^x$ with respect to the $q$-Mahler basis.
Both results appear in \cite{conrad2000q}, the first is a combination of Theorem 3.3 and Theorem 4.1, and the second is the example at the beginning of page 14.
For completeness we will prove both results.
\begin{thm}\label{q_Mahler}
Let $q\in\C_p$ with $\abs{q-1}_p<1$.
Then for any function $f\in C(\Z_p)$ there exists a unique sequence $(a_n)_{n=0}^\infty$ of numbers in $\C_p$ such that the series
\[\sum_{k=0}^\infty a_k\qbinom{x}{k}{q}\]
converges in the sup norm to $f$ (in particular $\limit{k}a_k=0$).
Moreover,
\[\supnorm{f}=\max_{k\geq 0}\pabs{a_k}.\]
\end{thm}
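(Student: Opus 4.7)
The plan is to prove the theorem by showing that the map $\Phi : c_0(\C_p) \to C(\Z_p)$ defined by $(a_k)_{k \geq 0} \mapsto \sum_{k=0}^\infty a_k \qbinom{x}{k}{q}$ is an isometric isomorphism; the existence and uniqueness of the expansion and the norm identity $\supnorm{f} = \max_k \pabs{a_k}$ then follow at once. The three ingredients I need are the uniform bound $\supnorm{\qbinom{x}{k}{q}} = 1$, the $q$-binomial inversion formula, and the $p$-adic Stone--Weierstrass theorem.

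For the first ingredient, an induction from the $q$-Pascal identity \eqref{q_Pascal} shows that each $\qbinom{n}{k}{q}$ (for $n \in \N$) is a polynomial in $q$ with nonnegative integer coefficients. Since $\pabs{q-1}<1$ places $q$ in $\mathcal{O}_{\C_p}$, this forces $\pabs{\qbinom{n}{k}{q}} \leq 1$; continuity in $x$ and density of $\N$ in $\Z_p$ extend the bound to all of $\Z_p$, while $\qbinom{k}{k}{q} = 1$ yields the reverse. The ultrametric inequality then makes $\Phi$ well-defined with $\supnorm{\Phi((a_k))} \leq \max_k \pabs{a_k}$. For the reverse inequality, note that at integer $n$ the sum $\Phi((a_k))(n) = \sum_{k=0}^n a_k \qbinom{n}{k}{q}$ is triangular (using $\qbinom{n}{k}{q} = 0$ for $n<k$), so $q$-binomial inversion gives
\[
a_n = \sum_{k=0}^n (-1)^{n-k}\, q^{\binom{n-k}{2}}\, \qbinom{n}{k}{q}\, \Phi((a_k))(k).
\]
This formula I would prove by induction from \eqref{q_Pascal} (equivalently, from the $q$-orthogonality $\sum_{k=m}^n (-1)^{n-k} q^{\binom{n-k}{2}} \qbinom{n}{k}{q} \qbinom{k}{m}{q} = \delta_{n,m}$). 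Since $\pabs{q}=1$ and each $\qbinom{n}{k}{q}$ has integer coefficients, this gives $\pabs{a_n} \leq \supnorm{\Phi((a_k))}$, so $\Phi$ is an isometry, and its image is closed because $c_0$ is complete.

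For density of the image, the product formula exhibits $\qbinom{x}{k}{q}$ as a polynomial of degree exactly $k$ in $q^x$, so $\C_p$-linear combinations of the $\qbinom{x}{k}{q}$ span precisely the subalgebra $\C_p[q^x] \subseteq C(\Z_p)$. Assuming $q$ is not a root of unity, $x \mapsto q^x$ separates points of $\Z_p$, so this subalgebra separates points and contains the constants; Kaplansky's $p$-adic Stone--Weierstrass theorem (already invoked in the example ending Section~3) then gives density. Being closed and dense, the image of $\Phi$ equals $C(\Z_p)$, finishing the argument. The main technical obstacle is the clean verification of the $q$-binomial inversion identity; a subtler point is the borderline case of $q$ being a $p^n$-th root of unity, where $q^x$ factors through $\Z_p/p^n\Z_p$ and the Stone--Weierstrass argument collapses, and which would need separate treatment by reducing to the corresponding finite quotient.
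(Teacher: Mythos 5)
Your proposal is correct for $q$ not a root of unity, but it takes a genuinely different route from the paper's proof, and the root-of-unity case is a real gap rather than a routine reduction.

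The paper works with the difference operator $T = \Delta/q^x$ on $C(\Z_p)$. Reducing modulo the ideal $(q-1)\mathcal{O}_{\C_p}$, $T$ becomes the ordinary forward difference $\Delta$ on locally constant functions, which is locally nilpotent; this gives $\supnorm{T^n f} \to 0$. The $q$-Pascal identity then shows $T$ shifts the normalized $q$-binomial basis down by one, and setting $a_k = q^{\binom{k}{2}}(T^k f)(0)$ produces a convergent series with the right restriction to $\Z_{\geq 0}$, hence equal to $f$. The norm identity follows from $T$ being norm-reducing. Your argument instead gets the isometry from the $q$-binomial inversion formula (your orthogonality relation is correct -- it reduces, after the substitution $j=n-m-k$, to $\sum_i (-1)^i q^{\binom{i}{2}}\qbinom{N}{i}{q} = (1;q)_N = \delta_{N,0}$), and gets surjectivity from closedness of the image plus $p$-adic Stone--Weierstrass applied to $\C_p[q^x]$. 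Both proofs are sound in their common domain of validity, but the paper's operator argument is self-contained, makes no use of Stone--Weierstrass, and -- more importantly -- makes no use of the product formula for $\qbinom{x}{k}{q}$, which is exactly the step of yours that breaks when $q$ is a root of unity.

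The gap in the root-of-unity case is more serious than your last sentence suggests. If $q$ is a primitive $p^N$-th root of unity, then $(q;q)_k = 0$ for $k \geq p^N$, the product formula fails, and the functions $\qbinom{x}{k}{q}$ for $k \geq p^N$ are \emph{not} polynomials in $q^x$: by the $q$-Lucas congruence one has, for instance, $\qbinom{n}{p^N}{q} = \lfloor n/p^N \rfloor$, a function that does not factor through $\Z_p/p^N\Z_p$. So the span of the $q$-Mahler basis is genuinely infinite-dimensional and strictly larger than the $p^N$-dimensional algebra $\C_p[q^x]$, and ``reducing to the corresponding finite quotient'' does not recover the density you need. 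A correct patch would have to combine $q$-Lucas with the classical Mahler theorem for the quotient variable $\lfloor x/p^N\rfloor$, which is a nontrivial amount of extra work. The paper's operator $T$ avoids the issue entirely, since the locally-nilpotent-modulo-$(q-1)$ argument never distinguishes between roots of unity and non-roots of unity.
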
 
    \begin{proof}
    Consider the operator $T=\frac{\Delta}{q^x}$ on $C(\Z_p)$, where $\Delta$ is the forward difference operator.
    Thus,
    \[Tf(x)=\frac{f(x+1)-f(x)}{q^x}.\]
    We begin by showing that for any $f\in\C(\Z_p)$, the sequence $\sog{T^nf(0)}_{n=0}^\infty$ converges to zero.
    Afterwards we will construct the sequence $(a_n)_{n=0}^\infty$ from $\sog{T^nf(0)}_{n=0}^\infty$.
    
    Recall that $\pabs{q-1}<1$, and denote $r=\pabs{q-1}$.
    Denote $r=\abs{q-1}_p$ and recall the assumption that $r<1$.
    We consider the quotient space
    \[W=\quot{\braces{f\in C(\Z_p)\ |\ \supnorm{f}\leq 1}}{\braces{f\in C(\Z_p)\ |\ \supnorm{f}\leq r}}.\]
    Its elements can be realized as locally constant functions on $\Z_p$ with values in $\mathcal{O}_{\C_p}/(q-1)\mathcal{O}_{\C_p}$.
    Since the operator $T$ is norm reducing, i.e. $\supnorm{T(f)}\leq \supnorm{f}$ for any $f\in C(\Z_p)$, $T$ induces an operator on $W$.
    Since the image of $q^x$ in $W$ is the constant function $1$, the operator $T$ reduces in $W$ to the forward difference operator $\Delta$.
    If $v\in W$, there exists some number $N$ such that $\Delta^{p^N}v= 0$.
    Thus, for any functions $f\in C(\Z_p)$ there exists $N>0$ such that 
    \[\supnorm{T^{p^N}f}\leq r\cdot \supnorm{f}.\]
    Together with the fact that $T$ is norm reducing, it follows that $\limit{n}\supnorm{T^nf}=0$.
    In particular, $\limit{n}T^nf(0)=0$.    
    
    By rearranging the $q$-Pascal identity \ref{q_Pascal} we get
    
    \[\frac{q^{\binom{k}{2}}\qbinom{n+1}{k}{q}-q^{\binom{k}{2}}\qbinom{n}{k}{q}}{q^n}=q^{\binom{k-1}{2}}\qbinom{n}{k-1}{q}.\]
    Continuity with respect to $n$ implies that \[T\sog{q^{\binom{k}{2}}\qbinom{x}{k}{q}}=q^{\binom{k-1}{2}}\qbinom{x}{k-1}{q}\]
    for any $k\geq 1$.
    When $k=1$ the function $q^0\qbinom{x}{0}{q}$ is just the constant function $1$, and clearly $T(1)=0$.
    
    Let $f\in C(\Z_p)$ and denote $a_n=q^{\binom{n}{2}}(T^nf)(0)$.
    The series 
    \[h(x)=\sum_{k=0}^\infty a_k\qbinom{x}{k}{q}\]
     converges in $C(\Z_p)$, and we have
     \[T^nh(0)=T^nf(0),\]
     for any $n\geq 0$.
    Since $h(0)=f(0)$, it follows that $h(n)=f(n)$ for any $n\geq 0$.
    By continuity we must have $h=f$.
    Thus,
    \[f(x)=\sum_{k=0}^\infty (T^kf)(0)q^{\binom{k}{2}}\qbinom{x}{k}{q}.\]
    This formula implies that $\supnorm{f}\leq\max_{k}\abs{a_k}_p$.
    The inequality in the other direction follows from the fact that $T$ is norm-reducing, so $\abs{a_k}_p=\abs{(T^nf)(0)}_p\leq \supnorm{T^nf}\leq \supnorm{f}$.
    \end{proof} 
\begin{defn}
We denote 
\[\pcoeff{\zeta}{q}_k=(\zeta-1)(\zeta-q^1)...(\zeta-q^{k-1})=(-1)^k\cdot q^{\binom{k}{2}}\cdot (\zeta;q^{-1})_k,\]
for $k>0$ and $\pcoeff{\zeta}{q}_0=1$.

\end{defn} 
\begin{cor}\label{q_expansion_of_powers}
Let $\zeta,q\in\C_p$ with $\abs{q-1}_p<1$ and $\abs{\zeta-1}_p<1$.
Then
\[\zeta^x=\sum_{k=0}^\infty \pcoeff{\zeta}{q}_k\cdot\qbinom{x}{k}{q}.\]
\end{cor}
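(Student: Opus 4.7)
The plan is to apply the $q$-Mahler expansion formula derived in the proof of Theorem \ref{q_Mahler} to the function $f(x)=\zeta^x$, and then explicitly compute the coefficients by iteratively applying the operator $T=\Delta/q^x$.

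First I would note that $\zeta^x$ lies in $C(\Z_p)$ under the assumption $\abs{\zeta-1}_p<1$: on $\N$ it is clearly well-defined, and the principal-unit condition ensures $p$-adic continuity so that the map extends. By Theorem \ref{q_Mahler} there is a unique $q$-Mahler expansion $\zeta^x=\sum_{k=0}^\infty a_k\qbinom{x}{k}{q}$, and inspecting the proof of that theorem one reads off
\[a_k=q^{\binom{k}{2}}\cdot (T^k\zeta^x)(0),\]
where $T=\Delta/q^x$. So the task reduces to computing $(T^k\zeta^x)(0)$.

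Next I would compute $T(\zeta^x)$ directly:
\[T(\zeta^x)=\frac{\zeta^{x+1}-\zeta^x}{q^x}=(\zeta-1)\cdot (\zeta q^{-1})^x.\]
Iterating (formal induction on $k$, using that $T$ applied to an exponential $\eta^x$ produces $(\eta-1)\cdot (\eta q^{-1})^x$) gives
\[T^k(\zeta^x)=\prod_{j=0}^{k-1}(\zeta q^{-j}-1)\cdot (\zeta q^{-k})^x.\]
Evaluating at $x=0$ and pulling out the factor $q^{-j}$ from each term,
\[(T^k\zeta^x)(0)=\prod_{j=0}^{k-1}q^{-j}(\zeta-q^j)=q^{-\binom{k}{2}}\cdot \pcoeff{\zeta}{q}_k.\]

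Plugging this back, the factor $q^{\binom{k}{2}}$ cancels with $q^{-\binom{k}{2}}$, and so $a_k=\pcoeff{\zeta}{q}_k$, giving the claimed expansion. Convergence of the series is automatic from Theorem \ref{q_Mahler}, but one also sees it directly: $\abs{\zeta-q^j}_p\le\max(\abs{\zeta-1}_p,\abs{q^j-1}_p)<1$, so $\abs{\pcoeff{\zeta}{q}_k}_p\to 0$. There is no serious obstacle here; the only thing requiring attention is the bookkeeping with the powers of $q$, where the $q^{-\binom{k}{2}}$ arising from the iteration of $T$ matches precisely the $q^{\binom{k}{2}}$ appearing in the $q$-Mahler coefficient formula.
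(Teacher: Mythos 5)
Your proof is correct and follows essentially the same route as the paper: apply $T=\Delta/q^x$ iteratively to $\zeta^x$, evaluate at $0$, and multiply by $q^{\binom{k}{2}}$ per the coefficient formula from the proof of Theorem \ref{q_Mahler}. The only cosmetic difference is that you factor out $q^{-j}$ from each term to produce an explicit $q^{-\binom{k}{2}}$ that cancels, whereas the paper distributes the $q^{\binom{k}{2}}$ across the product directly — the same computation written in two equivalent ways.
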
 
    \begin{proof}
    We have $T^0(\zeta^x)(0)=\zeta^0=1=[\zeta,q]_0$.
    Compute 
    \[T(\zeta^x)=\frac{\zeta^{x+1}-\zeta^x}{q^x}=(\zeta-1)\sog{\frac{\zeta}{q}}^x.\]
    By induction:
    \[T^k(\zeta^x)=(\zeta-1)(\frac{\zeta}{q}-1)...(\frac{\zeta}{q^{k-1}}-1)\sog{\frac{\zeta}{q^k}}^x.\]
    By the proof of Theorem \ref{q_Mahler}, the coefficient of $\qbinom{x}{k}{q}$ in the expansion of $\zeta^x$ is 
    \[q^{\binom{k}{2}}\cdot (T^kf)(0)=q^{\binom{k}{2}}(\zeta-1)(\frac{\zeta}{q}-1)...(\frac{\zeta}{q^{k-1}}-1)\sog{\frac{\zeta}{q^k}}^0=(\zeta-1)(\zeta-q)...(\zeta-q^{k-1}).\]
    \end{proof}

\subsection{The $p$-adic valuation of $(\zeta;\zeta)_n$ when $\zeta$ is a root of unity}
Fix $N\in \N$ and let $\zeta$ be a primitive $p^{N}-th$ root of unity in $\fld{C}_p$. 
In this subsection we study the $p$-adic valuation of the expression
\[(\zeta;\zeta)_n=(1-\zeta)(1-\zeta^2)...(1-\zeta^n)\]
for $1\leq n<p^N$.
It will be convenient to denote $\lambda=-\log_p(\pabs{\zeta-1})$, so $\lambda=\frac{1}{p^{N-1}(p-1)}$.

The main goal is to prove the following result, which will be used later.
\begin{prop}\label{q_analog_evaluations_cor}
For any $p^8\leq n<p^N$,
\[\log_p(\abs{(\zeta;\zeta)_n}_p)\leq -\frac{\lambda}{4}n\log_p(n).\]
\end{prop}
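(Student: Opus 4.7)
The plan is to reduce the claim to an elementary combinatorial estimate on the sum $\sum_{i=1}^n p^{v_p(i)}$, where $v_p$ denotes the $p$-adic valuation on $\mathbb{Z}$.

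First I would compute $\log_p|1-\zeta^i|_p$ for $1\leq i<p^N$. Writing $i=p^j m$ with $\gcd(m,p)=1$ and noting $j<N$, the element $\zeta^i$ is a primitive $p^{N-j}$-th root of unity. By the standard computation of the absolute value of $1-\eta$ for a primitive $p^k$-th root of unity $\eta$ (namely $|1-\eta|_p=p^{-1/(p^{k-1}(p-1))}$), one gets
\[
\log_p|1-\zeta^i|_p\;=\;-\frac{1}{p^{N-j-1}(p-1)}\;=\;-\lambda\cdot p^{v_p(i)}.
\]
Summing over $1\leq i\leq n$ yields the identity
\[
\log_p|(\zeta;\zeta)_n|_p\;=\;-\lambda\sum_{i=1}^{n}p^{v_p(i)},
\]
so the claim reduces to proving $\sum_{i=1}^{n}p^{v_p(i)}\geq \tfrac{1}{4}n\log_p(n)$ for all $n\geq p^8$.

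Next I would prove this elementary inequality by Abel summation. Let $A_k=\lfloor n/p^k\rfloor$ count the multiples of $p^k$ in $\{1,\ldots,n\}$, so that $A_k-A_{k+1}$ is the number of $i$ with $v_p(i)=k$. Writing
\[
\sum_{i=1}^{n}p^{v_p(i)}\;=\;\sum_{k\geq 0}p^k(A_k-A_{k+1})\;=\;A_0+(p-1)\sum_{k\geq 1}p^{k-1}A_k,
\]
and using $A_k\geq n/p^k-1$ together with $\sum_{k=1}^{K}p^{k-1}\leq p^K/(p-1)\leq n/(p-1)$ for $K=\lfloor\log_p n\rfloor$, one obtains the clean bound
\[
\sum_{i=1}^{n}p^{v_p(i)}\;\geq\;\frac{p-1}{p}\cdot n\cdot(\log_p(n)-1).
\]

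Finally, I would close the argument: the hypothesis $n\geq p^8$ gives $\log_p(n)\geq 8$, so $\log_p(n)-1\geq \tfrac{7}{8}\log_p(n)$, and since $(p-1)/p\geq 1/2$ for all primes $p$, we conclude
\[
\sum_{i=1}^{n}p^{v_p(i)}\;\geq\;\frac{1}{2}\cdot\frac{7}{8}\cdot n\log_p(n)\;=\;\frac{7}{16}n\log_p(n)\;\geq\;\frac{1}{4}n\log_p(n),
\]
which after multiplication by $-\lambda$ is the desired inequality. There is no real obstacle here: the only place one must be careful is in the identification of the order of $\zeta^i$ and in the switch of summation in the Abel-style calculation; the constant $p^8$ is chosen precisely to absorb the loss of the $-1$ in $\log_p(n)-1$ and to make the ratio $\tfrac{7}{16}$ comfortably exceed $\tfrac{1}{4}$.
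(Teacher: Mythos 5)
Your proof is correct and follows essentially the same route as the paper: the key identity $\log_p|1-\zeta^i|_p=-\lambda\,p^{v_p(i)}$ and the reduction to the elementary estimate $\sum_{i\le n}p^{v_p(i)}\gtrsim n\log_p(n)$ are exactly the paper's $\beta_p(n)$ argument, just organized via Abel summation instead of the paper's direct term-by-term bounding. The Abel rearrangement in fact yields the slightly cleaner intermediate bound $\beta_p(n)\ge \frac{p-1}{p}\,n(\log_p n-1)$, and your final reduction using $n\ge p^8$ is a valid (and marginally tidier) variant of the paper's.
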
 

\begin{defn}
For a positive integer $n$ we define $\beta_p(n)$ to be
\[
\beta_p(n)=
\sum_{k=0}^{\infty}p^k\sog{\floor*{\frac{n}{p^k}}-\floor*{\frac{n}{p^{k+1}}}}.
\]
\end{defn}
Note that for any $n$ this sum is finite.

\begin{prop}\label{beta_function}
For any $1\leq n<p^N$ 
\[\log_p(\abs{(\zeta;\zeta)_n}_p)=-\lambda\cdot \beta_p(n).\]
\end{prop}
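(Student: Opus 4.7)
The plan is to compute $v_p(1-\zeta^j)$ for each $1 \le j \le n$ and sum the contributions, then recognize the resulting sum as $\beta_p(n)$ by counting integers in $\{1,\dots,n\}$ according to their $p$-adic valuation.

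First I would recall that if $\omega$ is a primitive $p^M$-th root of unity with $M\ge 1$, then $1-\omega$ is a uniformizer of $\Z_p[\omega]$ with $\log_p\abs{1-\omega}_p = -\frac{1}{p^{M-1}(p-1)}$ (this follows from the fact that $\Phi_{p^M}(1) = p$ and from the factorization of $\Phi_{p^M}$ over $\Z_p[\omega]$, together with the fact that all primitive $p^M$-th roots of unity have the same valuation). In our situation, for $1 \le j < p^N$, write $k = v_p(j)$ (so $0 \le k < N$). Then $\zeta^j$ is a primitive $p^{N-k}$-th root of unity, hence
\[
\log_p\abs{1-\zeta^j}_p \;=\; -\frac{1}{p^{N-k-1}(p-1)} \;=\; -\lambda\cdot p^{k} \;=\; -\lambda\cdot p^{v_p(j)}.
\]

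Next I would take $\log_p$ of the product defining $(\zeta;\zeta)_n$:
\[
\log_p\abs{(\zeta;\zeta)_n}_p \;=\; \sum_{j=1}^{n}\log_p\abs{1-\zeta^j}_p \;=\; -\lambda\cdot\sum_{j=1}^{n} p^{v_p(j)}.
\]
It remains to identify this last sum with $\beta_p(n)$. For each $k \ge 0$, the set $\{j : 1\le j\le n,\ v_p(j)\ge k\}$ has cardinality $\lfloor n/p^k\rfloor$, so the number of $j \in \{1,\dots,n\}$ with $v_p(j) = k$ exactly equals $\lfloor n/p^k\rfloor - \lfloor n/p^{k+1}\rfloor$. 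Grouping the sum $\sum_{j=1}^n p^{v_p(j)}$ according to the value of $v_p(j)$ therefore gives
\[
\sum_{j=1}^{n} p^{v_p(j)} \;=\; \sum_{k=0}^{\infty} p^{k}\sog{\floor*{\frac{n}{p^k}}-\floor*{\frac{n}{p^{k+1}}}} \;=\; \beta_p(n),
\]
which completes the proof.

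There is no real obstacle; the only point to be careful about is the range condition $1 \le n < p^N$, which ensures $v_p(j) < N$ for every $j$ in the product so that $\zeta^j \ne 1$ and the valuation formula for $1-\zeta^j$ applies. The identity $\log_p\abs{1-\omega}_p = -\frac{1}{p^{M-1}(p-1)}$ for primitive $p^M$-th roots of unity is standard and I would cite it rather than reprove it.
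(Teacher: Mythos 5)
Your proof is correct and follows essentially the same path as the paper: compute $\log_p\abs{1-\zeta^j}_p = -\lambda p^{v_p(j)}$ for each $j$, then group the indices $1\le j\le n$ by $v_p(j)=k$ using the count $\floor*{n/p^k}-\floor*{n/p^{k+1}}$ to recognize $\beta_p(n)$. There is no substantive difference from the paper's argument.
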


\begin{proof}
For any $1\leq m=ap^k<N$, where $p\nmid a$, we have
\[\log_p\sog{\pabs{\zeta^m-1}}=\log_p\sog{\pabs{\zeta^{p^k}-1}}=-\frac{1}{p^{N-k-1}(p-1)}=-p^k\cdot \lambda.\]
There are $\floor*{\frac{n}{p^k}}-\floor*{\frac{n}{p^{k+1}}}$ numbers between $1$ and $n$ that are divisible by $p^k$ but not by $p^{k+1}$.
Thus,
\begin{align*}
    \log_p\sog{\absolute*{(\zeta;\zeta)_n}_p}
    &=\sum_{i=1}^n\log_p\sog{\absolute*{1-\zeta^i}_p}
    =\sum_{k=0}^\infty\sog{ \floor*{\frac{n}{p^k}}-\floor*{\frac{n}{p^{k+1}}} }\cdot \log_p\sog{\absolute*{1-\zeta^{p^k}}_p}\\
    &=\sum_{k=0}^\infty\sog{ \floor*{\frac{n}{p^k}}-\floor*{\frac{n}{p^{k+1}}} }\cdot \sog{-p^k\lambda}
    =-\lambda\cdot\beta_p(n).
    \end{align*} 
\end{proof}

\begin{prop}
For every $n$
\[
\beta_p(n)\geq n\log_p(n)\cdot \frac{p-1}{p}-\frac{np}{p-1}.
\]
\end{prop}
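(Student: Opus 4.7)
The plan is to first rewrite $\beta_p(n)$ in a more convenient form using an Abel-summation-style rearrangement. Splitting the sum and re-indexing,
\[
\beta_p(n)=\sum_{k=0}^\infty p^k\floor*{\tfrac{n}{p^k}}-\sum_{k=0}^\infty p^k\floor*{\tfrac{n}{p^{k+1}}}
=n+\sum_{k=1}^\infty(p^k-p^{k-1})\floor*{\tfrac{n}{p^k}}
=n+(p-1)\sum_{k=1}^\infty p^{k-1}\floor*{\tfrac{n}{p^k}}.
\]
The right-hand side is nonnegative termwise, so it only remains to obtain a good lower bound for the tail sum.

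Next I would use the standard estimate $\floor*{n/p^k}\geq n/p^k-1$, but only over the range $1\leq k\leq K$ where $K:=\floor*{\log_p n}$, since higher terms vanish. This yields
\[
\sum_{k=1}^\infty p^{k-1}\floor*{\tfrac{n}{p^k}}\;\geq\;\sum_{k=1}^K p^{k-1}\sog{\tfrac{n}{p^k}-1}\;=\;\tfrac{Kn}{p}-\tfrac{p^K-1}{p-1}.
\]
Then I would bound the two error terms separately: from $p^K\leq n<p^{K+1}$ I get $K\geq\log_p(n)-1$, so $Kn/p\geq \frac{n\log_p n}{p}-\frac{n}{p}$; and $\frac{p^K-1}{p-1}\leq\frac{n}{p-1}$.

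Combining these estimates,
\[
\beta_p(n)\;\geq\;n+(p-1)\sog{\tfrac{n\log_p n}{p}-\tfrac{n}{p}-\tfrac{n}{p-1}}
=\tfrac{p-1}{p}\,n\log_p n-\tfrac{(p-1)n}{p},
\]
which is in fact sharper than the stated bound: since $\frac{p-1}{p}\leq\frac{p}{p-1}$ one has $-\frac{(p-1)n}{p}\geq-\frac{np}{p-1}$, so the desired inequality follows at once. There is no serious obstacle here; the only thing to be careful with is keeping the telescoping identity and the interval $1\leq k\leq K$ correctly aligned, so that the two crude losses (the $-1$ in $\floor*{n/p^k}\geq n/p^k-1$ and the $-1$ in $K\geq\log_p n-1$) combine into an error of order $n$ rather than $n\log_p n$.
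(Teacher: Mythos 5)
Your proof is correct, and in fact establishes the sharper bound $\beta_p(n)\geq \frac{p-1}{p}\,n\log_p n-\frac{(p-1)n}{p}$, from which the stated inequality follows since $\frac{p-1}{p}<\frac{p}{p-1}$. The route differs from the paper's in the initial algebraic step: you apply an Abel-summation rearrangement to collapse $\beta_p(n)$ into the single-floor form $n+(p-1)\sum_{k\geq 1}p^{k-1}\floor*{n/p^k}$ before estimating, whereas the paper works directly with the differenced sum $\sum_{k=0}^{d}p^k\bigl(\floor*{n/p^k}-\floor*{n/p^{k+1}}\bigr)$, splits off the $k=d$ term, and bounds each difference using $\floor{x}\geq x-1$ for one floor and $\floor{x}\leq x$ for the other. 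Your rearrangement is tidier because afterwards only one-sided floor bounds $\floor{x}\geq x-1$ are needed, every term of the tail sum is manifestly nonnegative, and the cruder losses track more transparently; this is what yields the slightly better constant. The paper's version is more immediate from the definition of $\beta_p$ but at the cost of juggling two-sided floor estimates and a less sharp final constant. Both arguments are elementary and rest on the same two facts, $K=\floor{\log_p n}>\log_p n-1$ and $p^K\leq n$, so the difference is one of bookkeeping rather than method.
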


\begin{proof}
Denote $d=\floor{\log_p(n)}$.
Then
   \begin{align*}
    \beta_p(n)
    &=\sum_{k=0}^{d}p^k\sog{\floor*{\frac{n}{p^k}}-\floor*{\frac{n}{p^{k+1}}}}
    =\sum_{k=0}^{d-1}p^k\sog{\floor*{\frac{n}{p^k}}-\floor*{\frac{n}{p^{k+1}}}}+p^d\floor*{\frac{n}{p^d}}\\
    &\geq\sum_{k=0}^{d-1}p^k\sog{\frac{n}{p^k}-1-\frac{n}{p^{k+1}}}+p^d\sog{\frac{n} {p^d}-1}\\
    &=\sum_{k=0}^{d-1}\sog{n-p^k-\frac{n}{p}}+n-p^d
    =dn\sog{1-\frac{1}{p}}+n-\sum_{k=0}^{d}p^k\\
    &=dn\frac{p-1}{p}+n-\frac{p^{d+1}-1}{p-1}.
    \end{align*}
Since  $n\geq p^d$ and $d> \log_p(n)-1$,
    \begin{align*}
    dn\frac{p-1}{p}+n-\frac{p^{d+1}-1}{p-1}
    &\geq \sog{\log_p(n)-1} n\frac{p-1}{p}+n-\frac{pn-1}{p-1}\\
    &=n\log_p(n) \frac{p-1}{p}-n+\frac{n}{p}+n-\frac{pn-1}{p-1}\\
    &<  n\log_p(n)\frac{p-1}{p}-\frac{np}{p-1}
    \end{align*} 
\end{proof}

    \begin{proof}[Proof of Proposition \ref{q_analog_evaluations_cor}]
    Write
    \[\beta_p(n)\geq n\log_p(n)\cdot\frac{p-1}{p}-n\cdot\frac{p}{p-1}
    =\frac{1}{2}\sog{n\log_p(n)\cdot\frac{p-1}{p}}+\frac{1}{2}\sog{n\log_p(n)\cdot\frac{p-1}{p}}-n\cdot\frac{p}{p-1}.\]
    If $n\geq p^8$, then
    \[\frac{1}{2}\sog{n\log_p(n)\cdot\frac{p-1}{p}}-n\cdot\frac{p}{p-1}
    \geq\frac{1}{2} \sog{n\cdot 8\cdot\frac{p-1}{p}}-n\cdot\frac{p}{p-1}
    =n\cdot\frac{(p-1)(3p-1)}{p(p-1)}\geq 0.\]
    Therefore, 
    \[\beta_p(n)\geq  \frac{1}{2}\cdot\frac{p-1}{2}n\log_p(n)\geq \frac{1}{4}n\log_p(n).\]
    By Proposition \ref{beta_function},
    \[\log_{p}(\abs{(\zeta;\zeta)_n}_p)=-\lambda\cdot\beta_p(n)\leq -\frac{\lambda}{4}n\log_p(n).\]
    \end{proof} 

\subsection{Completing the proof of Theorem \ref{thm_strong_minimality_Z_p} for $\mathbb{Z}_p$}
Let $\norm{}$ be a norm on $C(\Z_p)$, dominated by the sup norm, normalized at $\textbf{1}_{\Z_p}(x)$ and invariant under multiplication by smooth characters of $\Z_p$.
Let $G_{\norm{}}(r)$ be the growth modulus of $\norm{}$.
We suppose that $G_{\norm{}}(1)>1$ and reach a contradiction.

By the assumption that $G_{\norm{}}(1)>1$, the continuity of $G_{\norm{}}(r)$ and the density of regular values (Proposition \ref{discreteness_of_critical_values}), there exists $h\in \C_p$ such that $s:=\abs{h}_p<1$ is a regular value for the norm $\norm{}$ and such that $G_{\norm{}}(s)>1$.
We may also assume that $s\geq p^{-1/(p-1)}$.
The last assumption can be written as $\log_p\sog{\frac{1}{s}}\leq \frac{1}{p-1}$.
We fix such $h$ and denote $s=\abs{h}_p$.
We remark that $h$ and $s$ depend only on the norm $\norm{}$.

From now on, $\zeta$ denotes a primitive $p^N$-th root of unity and $N$ is assumed to be very large (in a way that will be made explicit below).
We denote $\lambda=-\log_p(\pabs{\zeta-1})$ and $q=\zeta+h$.

Thus, $h$ is fixed and $\zeta$ is at our disposal, close as we wish to the circumference of the unit disc around $1$, and $q$ varies with $\zeta$ at a fixed distance $s$ from it.

The idea of the proof is to use the expansion 
\[\zeta^x=\sum_{k=0}^\infty\pcoeff{\zeta}{q}_k\cdot\qbinom{x}{k}{q}\]
to show, under the assumption that $N$ is very large, that
    \begin{equation}\label{main_inequality}
    \norm{\pcoeff{\zeta}{q}_1\cdot\qbinom{x}{1}{q}}
    >\norm{\pcoeff{\zeta}{q}_k\cdot\qbinom{x}{k}{q}}  
    \end{equation} 
for any $k\neq 1$.
Then, by the strong triangle inequality,
\[\norm{\zeta^x}
=\norm{\pcoeff{\zeta}{q}_1\cdot\qbinom{x}{1}{q}}
>\norm{\pcoeff{\zeta}{q}_0\cdot\qbinom{x}{0}{q}}
=\norm{\textbf{1}(x)}
=1\] 
which is a contradiction to the assumption that $\norm{}$ is invariant under multiplication by $\zeta^x$ and normalized at $\textbf{1}_{\Z_p}(x)$.

The proof of \ref{main_inequality} will be divided into three cases.
The first, $k=0$, is the easiest.
The second and third cases are when  $1<k<\frac{1}{\lambda}\log_p\sog{\frac{1}{\sqrt{s}}}$ and $k\geq \frac{1}{\lambda}\log_p\sog{\frac{1}{\sqrt{s}}}$ respectively.
In each of these cases we will need to use different type of inequalities.

\begin{prop}\label{prop_norm_of_q^x}
We have that $\norm{q^x}=G_{\norm{}}(s)$ and $\norm{q^{ax}}\leq \norm{q^x}$ for any $a\in\Z_p$.
\end{prop}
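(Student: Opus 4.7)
The plan is to reduce both claims to Proposition \ref{powers_and_growth_modulus}, which says that if $t = \abs{r-1}_p < 1$ is a regular value of $\norm{}$ then $\norm{r^x} = G_{\norm{}}(t)$, and in general $\norm{r^x} \leq G_{\norm{}}(t)$.

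First, I would verify that $\abs{q-1}_p = s$. Since $q - 1 = (\zeta - 1) + h$ with $\abs{\zeta-1}_p = p^{-\lambda}$ and $\abs{h}_p = s$, and since $N$ is taken very large so that $p^{-\lambda} < s$, the strong triangle inequality gives $\abs{q-1}_p = s$. Because $s$ was chosen precisely to be a regular value for $\norm{}$, Proposition \ref{powers_and_growth_modulus} applies to $q$ and yields $\norm{q^x} = G_{\norm{}}(s)$, which is the first claim.

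For the second claim, I would write $q^{ax} = (q^a)^x$, so it suffices to bound $\norm{(q^a)^x}$. The key sublemma is that $\abs{q^a - 1}_p \leq \abs{q-1}_p = s$ for every $a \in \Z_p$. For a positive integer $a$ this is immediate from the factorization $q^a - 1 = (q-1)(q^{a-1} + \cdots + 1)$ together with $\abs{q}_p = 1$ (which holds since $\abs{q-1}_p < 1$); for negative integers use $q^{-1} - 1 = -q^{-1}(q-1)$; and the general case $a \in \Z_p$ follows by continuity of $a \mapsto q^a$ on $\Z_p$ together with the density of $\Z$ in $\Z_p$. Applying Proposition \ref{powers_and_growth_modulus} to $q^a$ (without needing regularity, only the inequality direction) then gives
\[
\norm{q^{ax}} = \norm{(q^a)^x} \leq G_{\norm{}}(\abs{q^a - 1}_p).
\]
Finally, invoke the monotonicity of the growth modulus (it is non-decreasing, as recalled in the \emph{Classical Lemma}) to conclude
\[
G_{\norm{}}(\abs{q^a - 1}_p) \leq G_{\norm{}}(s) = \norm{q^x},
\]
which gives the second claim.

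There is essentially no serious obstacle here; the proposition is a direct corollary of the machinery already developed in the section on the growth modulus. The only mildly delicate point is passing from integer exponents to $a \in \Z_p$ in the estimate $\abs{q^a - 1}_p \leq s$, but this is handled cleanly by continuity.
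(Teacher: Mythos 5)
There is a genuine gap in your first step. You claim that $\abs{q-1}_p = s$ because ``$N$ is taken very large so that $p^{-\lambda} < s$.'' This is backwards: since $\lambda = \tfrac{1}{p^{N-1}(p-1)}$, taking $N$ large sends $\lambda \to 0$, so $\pabs{\zeta-1} = p^{-\lambda}$ tends to $1$ from below. The paper's running assumption (made explicit in the final proof of the inequality \ref{main_inequality}) is precisely the opposite one, $\pabs{\zeta-1} > s$. Consequently the ultrametric triangle gives $\abs{q-1}_p = \max\bigl(\pabs{\zeta-1}, s\bigr) = \pabs{\zeta-1}$, which is \emph{not} equal to $s$ and need not be a regular value. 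So the hypothesis of Proposition \ref{powers_and_growth_modulus} is not satisfied by $q$ itself, and the direct application you propose does not yield $\norm{q^x} = G_{\norm{}}(s)$; it would at best give the useless upper bound $\norm{q^x} \leq G_{\norm{}}(\pabs{\zeta-1})$, a larger quantity.

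The missing idea, which is the whole point of choosing $q = \zeta + h$ with $\zeta$ a root of unity, is to exploit the invariance of $\norm{}$ under multiplication by \emph{smooth characters}. Writing $q^x = \zeta^x \cdot (1 + h/\zeta)^x$ and noting that $\zeta^x$ is a smooth character of $\Z_p$ (since $\zeta$ is a $p^N$-th root of unity), one gets $\norm{q^x} = \norm{(1+h/\zeta)^x}$, and now $\abs{h/\zeta}_p = \abs{h}_p = s$ is a regular value, so Proposition \ref{powers_and_growth_modulus} applies and gives equality. The same trick handles the second claim: $\norm{q^{ax}} = \norm{\zeta^{ax}(1+h/\zeta)^{ax}} = \norm{(1+h')^x}$ with $h' = (1+h/\zeta)^a - 1$, and then $\abs{h'}_p \leq s$ together with the monotonicity of $G_{\norm{}}$ finishes the argument. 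Your estimate $\abs{q^a-1}_p \leq \abs{q-1}_p$ is correct but, without first stripping off the $\zeta$-part, it controls the wrong quantity: it bounds things by $G_{\norm{}}(\pabs{\zeta-1})$, not by $G_{\norm{}}(s) = \norm{q^x}$.
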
 
    \begin{proof}
    Write
    \[q^x=(\zeta+h)^x=\zeta^x\sog{1+\frac{h}{\zeta}}^x.\]
    Since $\norm{}$ is invariant under multiplication by smooth characters
    \[\norm{\zeta^x\sog{1+\frac{h}{\zeta}}^x}=\norm{\sog{1+\frac{h}{\zeta}}^x}.\]
    Since $\abs{h/\zeta}_p=\abs{h}_p=s$ is a regular value for the norm $\norm{}$, we have, by Proposition \ref{powers_and_growth_modulus}, an equality
    \[\norm{\sog{1+\frac{h}{\zeta}}^x}=G_{\norm{}}(s).\]
    Thus, $\norm{q^x}=G_{\norm{}}(s)$.
    Let $a\in\Z_p$.
    To show that $\norm{q^{ax}}\leq \norm{q^x}$ we use the same trick.
    Write
    \[\norm{q^{ax}}=\norm{(\zeta+h)^{ax}}=\norm{\zeta^{ax}\cdot\sog{1+\frac{h}{\zeta}}^{ax}}=\norm{\sog{1+\frac{h}{\zeta}}^{ax}}=\norm{(1+h')^x},\]
    where $h'=(1+h/\zeta)^a-1$.
    Then $\abs{h'}_p\leq \abs{h}_p$.
    Since $G_{\norm{}}(r)$ is monotone increasing, and by proposition \ref{powers_and_growth_modulus}, 
    \[\norm{q^{ax}}=\norm{(1+h')^x}\leq G(\abs{h'}_p)\leq G(\abs{h}_p)=\norm{q^x}.\]
    \end{proof} 

\begin{prop}\label{some_p_adic_valuations}
Assume that $\pabs{1-\zeta}>s$.
    \begin{enumerate}
    \item Let $ s< r<1$.
    Then for any $1\leq i\leq\frac{1}{\lambda}\log_p\sog{\frac{1}{r}}$ 
    \[\abs{1-q^i}_p=\abs{1-\zeta^i}_p\geq r.\]
    \item For any $1<k\leq\frac{1}{\lambda}\log_p\sog{\frac{1}{\sqrt{s}}}$ 
    \[\abs{\pcoeff{\zeta}{q}_k}_p\leq \sqrt{s}\cdot\abs{(q;q)_k}_p\]
    \end{enumerate}
\end{prop}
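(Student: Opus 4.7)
The plan is to reduce both parts to explicit bounds on $\pabs{1-\zeta^i}$ and then, in part (2), to exploit a telescoping in the ratio $\pabs{\pcoeff{\zeta}{q}_k}/\pabs{(q;q)_k}$. Throughout I will use freely the fact that $q=\zeta+h$ with $\pabs{h}=s$.

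For part (1), I would first expand $q^i=(\zeta+h)^i$ by the binomial theorem to get
\[q^i-\zeta^i=\sum_{j=1}^{i}\binom{i}{j}\zeta^{i-j}h^j,\]
and conclude $\pabs{q^i-\zeta^i}\leq s$, since $\pabs{\binom{i}{j}}\leq 1$ and each summand is divisible by $h$. Next, writing $i=p^a m$ with $p\nmid m$, the element $\zeta^i$ is a primitive $p^{N-a}$-th root of unity, so $\pabs{1-\zeta^i}=p^{-p^a\lambda}$. The hypothesis $i\leq\frac{1}{\lambda}\log_p(1/r)$ forces $p^a\leq i\leq\frac{1}{\lambda}\log_p(1/r)$, hence $\pabs{1-\zeta^i}\geq r$. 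Since $r>s\geq\pabs{q^i-\zeta^i}$, the strong triangle inequality applied to $1-q^i=(1-\zeta^i)-(q^i-\zeta^i)$ yields the desired equality and lower bound.

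For part (2), the plan is to evaluate each factor of $\pcoeff{\zeta}{q}_k$ and $(q;q)_k$ using part (1) with $r=\sqrt{s}$, which is valid throughout the range since $k\leq\frac{1}{\lambda}\log_p(1/\sqrt{s})$. This gives:
\begin{itemize}
\item $\pabs{1-q^i}=\pabs{1-\zeta^i}$ for all $1\leq i\leq k$;
\item $\pabs{\zeta-1}=p^{-\lambda}$ and $\pabs{\zeta-q}=s$;
\item for $2\leq j\leq k-1$, writing $\zeta-q^j=-\zeta(\zeta^{j-1}-1)+(\zeta^j-q^j)$ and noting the second term has absolute value $\leq s<\sqrt{s}\leq\pabs{\zeta^{j-1}-1}$ (again by part (1)), the strong triangle inequality gives $\pabs{\zeta-q^j}=\pabs{\zeta^{j-1}-1}$.
\end{itemize}
Assembling these and reindexing $l=j-1$, the numerator becomes $p^{-\lambda}\cdot s\cdot\prod_{l=1}^{k-2}\pabs{\zeta^l-1}$, while the denominator is $\prod_{i=1}^{k}\pabs{\zeta^i-1}$. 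Massive cancellation leaves
\[\frac{\pabs{\pcoeff{\zeta}{q}_k}}{\pabs{(q;q)_k}}=\frac{p^{-\lambda}\cdot s}{\pabs{\zeta^{k-1}-1}\cdot\pabs{\zeta^k-1}}.\]

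To finish, I would use the elementary observation that of any two consecutive integers $k-1,k$, at least one is coprime to $p$. For that one, say $m$, the factor $\pabs{\zeta^m-1}$ equals $p^{-\lambda}$; for the other index $m'$, part (1) applied with $r=\sqrt{s}$ gives $\pabs{\zeta^{m'}-1}\geq\sqrt{s}$. Therefore $\pabs{\zeta^{k-1}-1}\cdot\pabs{\zeta^k-1}\geq p^{-\lambda}\cdot\sqrt{s}$, and substituting yields the bound $\pabs{\pcoeff{\zeta}{q}_k}\leq\sqrt{s}\cdot\pabs{(q;q)_k}$. The main obstacle is purely bookkeeping: ensuring for each factor that the "error" term of size $\leq s$ is strictly dominated so that the strong triangle inequality can be invoked. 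The conceptual heart is the coprimality trick, which pays off the "bad" factor $\pabs{\zeta-q}=s$ appearing in the numerator by recovering the "missing" large factor $\pabs{\zeta^k-1}$ in the denominator.
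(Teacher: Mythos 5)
Your proof is correct and follows essentially the same route as the paper: estimate $\pabs{q^i-\zeta^i}\leq s$, apply the strong triangle inequality, telescope the ratio $\pcoeff{\zeta}{q}_k/(q;q)_k$ down to $\frac{\pabs{\zeta-1}\cdot s}{\pabs{1-\zeta^{k-1}}\cdot\pabs{1-\zeta^k}}$, and finish with the coprimality observation on the consecutive integers $k-1,k$. The only cosmetic differences are that you obtain $\pabs{q^i-\zeta^i}\leq s$ via the binomial theorem rather than by factoring $q^i-\zeta^i=(q-\zeta)(q^{i-1}+\cdots+\zeta^{i-1})$, and you rewrite $\zeta-q^j$ as $-\zeta(\zeta^{j-1}-1)+(\zeta^j-q^j)$ instead of $(\zeta-q)+q(1-q^{j-1})$, which amounts to the same cancellation after using part (1).
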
 
    \begin{proof}
    First, recall our assumption that $\log_p\sog{\frac{1}{s}}\leq\frac{1}{p-1}$.
    Then 
    \[\frac{1}{\lambda}\log_p\sog{\frac{1}{r}}\leq \frac{1}{\lambda}\log_p\sog{\frac{1}{s}}\leq \frac{1}{(p-1)\lambda}=p^{N-1},\]
    for any $s<r<1$.
    In particular, any indices $i$ and $k$ that appear in this proof are in $\braces{0,1,...,p^N-1}$, so the expressions $\zeta^i, \zeta^k$ are not equal to $1$.
    Second, note that if $N$ is not large enough, the interval $[1,\frac{1}{\lambda}\log_p\sog{\frac{1}{r}}]$ may be empty.
        
    (1). For any $i\geq 1$, $\pabs{\zeta^i-q^i}\leq \pabs{\zeta-q}=s< r$.
    The condition $i\leq\frac{1}{\lambda}\log_p\sog{\frac{1}{r}}$ is equivalent to
    \[r\leq p^{-\lambda i}=\pabs{\zeta-1}^i.\]
    Write $i=ap^k$ with $p\nmid a$. Then
    \[\pabs{1-\zeta^i}=\pabs{1-\zeta^{p^k}}=\pabs{1-\zeta}^{p^k}\geq \pabs{1-\zeta}^i\geq r.\]
    Thus,
    \[\pabs{1-q^i}=\pabs{(\zeta^i-q^i)+(1-\zeta^i)}=\abs{1-\zeta^i}_p\geq r.\]
    
    (2). We use part $(1)$ with $r=\sqrt{s}>s$.
    Then,
    \[\abs{1-q^i}_p=\abs{1-\zeta^i}_p\geq \sqrt{s}>s,\]
     for all $1\leq i\leq \frac{1}{\lambda}\log_p\sog{\frac{1}{\sqrt{s}}}$.
    If in addition $i>1$, then by writing $\zeta-q^i=(\zeta-q)+q(1-q^{i-1})$ we see that 
    \[\abs{\zeta-q^i}_p=\abs{q(1-q^{i-1})}_p=\abs{1-q^{i-1}}_p.\]
    Let $1<k\leq \frac{1}{\lambda}\log_p\sog{\frac{1}{\sqrt{s}}}$.
    Then        \begin{align*}
        \frac{\pcoeff{\zeta}{q}_k}{(q;q)_k}
    &=\frac{(\zeta-1)(\zeta-q)(\zeta-q^2)...(\zeta-q^{k-1})}{(1-q)(1-q^2)(1-q^3)...(1-q^k)}\\  
    &=\frac{(\zeta-1)(\zeta-q)}{(1-q^{k-1})(1-q^k)}\cdot\sog{\frac{\zeta-q^2}{1-q}}\cdot\sog{\frac{\zeta-q^3}{1-q^2}}\cdot...\cdot\sog{\frac{\zeta-q^{k-1}}{1-q^{k-2}}}
        \end{align*} 
    (Note that $(q;q)_k\neq 0$).
    Using the equality $\abs{\zeta-q^i}_p=\abs{1-q^{i-1}}_p$ for any $2\leq i\leq k-1$ we see that
    \[\frac{\abs{\pcoeff{\zeta}{q}_k}_p}{\abs{(q;q)_k}_p}=\frac{\abs{(\zeta-1)}_p\abs{(\zeta-q)}_p}{\abs{(1-q^{k-1})}_p\abs{(1-q^k)}_p}.\]
    By part $(1)$, $\abs{1-q^k}_p\geq \sqrt{s}$ and $\abs{1-q^{k-1}}_p\geq \sqrt{s}$.
    Moreover, since one of $k$ or $k-1$ is not divisible by $p$, the $p$-adic absolute value of one of them is equal to $\abs{1-q}_p$.
    The assumption that $\pabs{\zeta-1}>s$ implies that $\pabs{\zeta-1}=\pabs{q-1}$.  
    Thus, 
    \[\frac{\abs{\pcoeff{\zeta}{q}_k}_p}{\abs{(q;q)_k}_p}\leq \frac{\abs{\zeta-1}_p\cdot s}{\abs{\zeta-1}_p\cdot \sqrt{s}}=\sqrt{s}.\]
    \end{proof} 

    \begin{proof}[Proof of \ref{main_inequality}]
    Denote $\alpha=\frac{1}{2\lambda}\log_p\sog{\frac{1}{\sqrt{s}}}$ and let $N$ be large enough such that the following conditions are satisfied ($\zeta$ is a primitive $p^N$-th root of unity).
        \begin{enumerate}
        \item $\pabs{1-\zeta}>s$.
        \item $\alpha>p^8$.
        \item $\frac{\lambda}{4}\alpha\log_p(\alpha)\geq \log_p\sog{\frac{M}{\sqrt{s}}}$.
        Note that $\frac{\lambda}{4}\alpha\log_p(\alpha)=A\cdot \log_p(\frac{1}{\lambda})+B$ where $A=\frac{1}{8}\log_p\sog{\frac{1}{\sqrt{s}}}>0$ and $B=A\cdot\log_p\sog{\frac{1}{2}\log_p\sog{\frac{1}{\sqrt{s}}}}$.
        Note that $B$ does not depend on $\zeta$.
        \end{enumerate}
        
    Under these assumptions we will show that
    \[\norm{\pcoeff{\zeta}{q}_1\cdot\qbinom{x}{1}{q}}
    >\norm{\pcoeff{\zeta}{q}_k\cdot\qbinom{x}{k}{q}}\]
    for any $k\neq 1$.
    
    We begin by showing that $\norm{\qbinom{x}{1}{q}}=\norm{q^x}>1$.
    Indeed, by the assumption that $\pabs{\zeta-1}>s$ we have that $\pabs{\zeta-1}=\pabs{q-1}$.
    Thus,
    \[\norm{\pcoeff{\zeta}{q}_1\cdot\qbinom{x}{1}{q}}=\norm{(\zeta-1)\frac{1-q^x}{1-q}}=\norm{1-q^x}.\]
    By Proposition \ref{prop_norm_of_q^x}, $\norm{q^x}=G_{\norm{}}(s)>1$.
    Therefore, 
    \[\norm{1-q^x}=\norm{q^x}>1.\]
    
    Assume that $k=0$.
    Then \[\norm{\pcoeff{\zeta}{q}_0\cdot\qbinom{x}{0}{q}}=\norm{\textbf{1}_{\Z_p}(x)}=1.\]
    
    Assume $1<k\leq\frac{1}{\lambda}\log_p\sog{\frac{1}{\sqrt{s}}}$.
By part $(2)$ of Proposition \ref{some_p_adic_valuations} and by Proposition \ref{prop_norm_of_q^x},
        \begin{align*}
        \nrm*{\pcoeff{\zeta}{q}_k\cdot\qbinom{x}{k}{q}}
        =\frac{\abs{\pcoeff{\zeta}{q}_k}_p}{\abs{(q;q)_k}_p}\cdot\norm{(q^x-1)(q^x-q)...(q^x-q^{k-1})}
        \leq \sqrt{s}\cdot\norm{q^x}
        <\norm{q^x}
        =\norm{\pcoeff{\zeta}{q}_1\cdot\qbinom{x}{1}{q}}.
        \end{align*} 
        
    Assume that $k> \frac{1}{\lambda}\log_p\sog{\frac{1}{\sqrt{s}}}$.
    Let $m$ be an integer with 
    \[\frac{1}{2\lambda}\log_p\sog{\frac{1}{\sqrt{s}}}\leq m<\frac{1}{\lambda}\log_p\sog{\frac{1}{\sqrt{s}}}.\]
    Such an integer exists, since $\frac{1}{\lambda}\log_p\sog{\frac{1}{\sqrt{s}}}>2p^8> 4$.
    As $k>m$, $\abs{\pcoeff{\zeta}{q}_k}_p\leq\abs{\pcoeff{\zeta}{q}_m}_p$.
    By the second and first parts of Proposition \ref{some_p_adic_valuations} we have
    \[\abs{\pcoeff{\zeta}{q}_m}_p\leq \sqrt{s}\cdot\abs{(q;q)_m}_p=\sqrt{s}\cdot\abs{(\zeta;\zeta)_m}_p.\]
    Since $m>p^8$ we can apply Proposition \ref{q_analog_evaluations_cor}, and together with the assumption that $m\geq\alpha$ we get
        \begin{align*}
        \log_p(\abs{(\zeta;\zeta)_m}_p)
        \leq -\frac{\lambda}{4}m\log_p(m)
        \leq -\frac{\lambda}{4}\alpha\log_p(\alpha)
        \leq -\log_p\sog{\frac{M}{\sqrt{s}}}.
        \end{align*} 
    In the last inequality we used our assumption $3$.
    Then $\abs{(\zeta;\zeta)_m}_p\leq \frac{\sqrt{s}}{M}$.
    Therefore,
    \[\abs{\pcoeff{\zeta}{q}_k}_p\leq \sqrt{s}\cdot \abs{(\zeta;\zeta)_m}_p\leq \sqrt{s}\cdot \frac{\sqrt{s}}{M}=\frac{s}{M}.\]
    Finally,
    \[\norm{\pcoeff{\zeta}{q}_k\cdot\qbinom{x}{k}{q}}\leq \abs{\pcoeff{\zeta}{q}_k}_p\cdot M\leq s<1<\norm{\pcoeff{\zeta}{q}_1\cdot\qbinom{x}{1}{q}}.\]
    This completes the proof of \ref{main_inequality}, hence of Theorem \ref{thm_strong_minimality_Z_p} for $\Z_p$.
    \end{proof}

\section{Further discussion about $\mathcal{H}$-invariant norms on $\mathcal{S}$}
This section is motivated by the search for other minimal invariant norms on $\schw$.
In addition, Theorem \ref{prop_independence_minimal_norms} is a generalization of the discontinuity of the Fourier transform proved in \cite{ophir2016q} to finite families of intertwining operators.

By now we have constructed two types of $\heis$-invariant norms on $\schw$: the family of minimal norms $\braces{\norm{}_\alpha\ |\ \alpha\in \Grassmannian}$, and, for each non-zero $f\in\schw$, the maximal invariant norm at $f$ which we denoted by $\norm{}_f$.
The latter belong to the maximal equivalence class of $\heis$-invariant norms.
Given any subset $I\subset \Grassmannian$, we can form the norm $\sup_{\alpha \in I}\norm{}_\alpha$.
The supremum exists since all the $\norm{}_\alpha$, being normalized at $\textbf{1}_{\Z_p^d}(x)$, are bounded from above by the maximal invariant norm at $\textbf{1}_{p^d\Z_p}(x)$.
In this section we consider finite families $I\subset \Grassmannian$ and the norms
\[\norm{}_I:=\max_{\alpha\in I}\norm{}_\alpha,\]
and answer the question: are there new minimal norms that lie below $\norm{}_I$?
We show that the answer is negative.
In fact, we will show the following.
\begin{thm}\label{thm_classification_small_norms}
$\ $
    \begin{enumerate}
    \item Let $I,J\subset \Grassmannian$ be distinct finite subsets. Then $\norm{}_I$ and $\norm{}_J$ are not equivalent.
    \item Let $\norm{}$ be an $\heis$-invariant norm which is dominated by $\norm{}_I$, where $I\subset \Grassmannian$ is finite.
    Then there exists $J\subset I$ such that $\norm{}$ is equivalent to $\norm{}_J$.
    \item If $I_1,I_2\subset \Grassmannian$ are finite and disjoint, there does not exist any $\heis$-invariant norm on $\schw$ which is dominated by both $\norm{}_{I_1}$ and by $\norm{}_{I_2}$.
    \end{enumerate}
\end{thm}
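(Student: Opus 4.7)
The plan is to derive parts (1) and (3) from part (2), which is the core statement. For (2), let $\norm{}$ be an $\heis$-invariant norm on $\schw$ dominated by $\norm{}_I$, set $B = \bigoplus_{\alpha \in I}\completion{\schw}{\norm{}_\alpha}$ with the max norm (the completion of $\schw$ embedded diagonally in $\norm{}_I$), and let $B'$ be the completion of $(\schw,\norm{})$. The identity on $\schw$ extends to a continuous $\heis$-equivariant map $\pi: B \to B'$ with dense image. First I would compute $\ker\pi$: for each $\alpha\in I$ the composition $\pi_\alpha: \completion{\schw}{\norm{}_\alpha} \hookrightarrow B \to B'$ is either zero or injective by topological irreducibility (Proposition \ref{topologically_irreducible}). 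Setting $J=\{\alpha\in I:\pi_\alpha\neq 0\}$, I claim $\ker\pi = \bigoplus_{\alpha\in I\setminus J}\completion{\schw}{\norm{}_\alpha}$: the containment $\supseteq$ is clear, and any hypothetical nonzero element of $(\ker\pi)/\bigoplus_{I\setminus J}\completion{\schw}{\norm{}_\alpha}$, viewed through its nonzero components, would by Proposition \ref{prop_small_representations} (applied to the pairwise non-isomorphic strongly irreducible $\completion{\schw}{\norm{}_\alpha}$ from Proposition \ref{topologically_irreducible}(3)) be a strongly cyclic vector in the corresponding sub-sum, forcing some $\completion{\schw}{\norm{}_\alpha}$ with $\alpha\in J$ into $\ker\pi$, contradicting $\pi_\alpha\neq 0$.

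The resulting injection $\bar\pi: \bigoplus_{\alpha\in J}\completion{\schw}{\norm{}_\alpha}\to B'$ has dense image, and the heart of the proof is its surjectivity; the open mapping theorem then yields $\norm{}\sim\norm{}_J$. Since $\schw$ is dense in $B'$, it suffices to show that $W:=\sum_{\alpha\in J}\pi_\alpha(\completion{\schw}{\norm{}_\alpha})$ is closed in $B'$. Each summand is closed: pulling the $B'$-norm back along the injective $\pi_\alpha$ gives an $\heis$-invariant norm on $\schw$ dominated by $\norm{}_\alpha$, hence a positive scalar multiple of $\norm{}_\alpha$ by Theorem \ref{thm_strong_minimality}, so $\pi_\alpha$ is an isometry up to a constant and its image is complete.

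Closedness of $W$ I would prove by induction on $|J|$, proving simultaneously that the natural map $\bigoplus_{\alpha\in K}\completion{\schw}{\norm{}_\alpha}\to W_K:=\sum_{\alpha\in K}\pi_\alpha(\completion{\schw}{\norm{}_\alpha})$ is a topological isomorphism onto a closed subspace for every $K\subset J$. In the inductive step, with $K' = K\setminus\{\alpha_0\}$, the subspace $W_{K'}\cong\bigoplus_{\alpha\in K'}\completion{\schw}{\norm{}_\alpha}$ is closed by hypothesis, and the intersection $\pi_{\alpha_0}(\completion{\schw}{\norm{}_{\alpha_0}})\cap W_{K'}$ is a closed subrepresentation of the strongly irreducible $\pi_{\alpha_0}(\completion{\schw}{\norm{}_{\alpha_0}})$; it must vanish, else it would be the whole space and produce a nonzero $\heis$-equivariant bounded map $\completion{\schw}{\norm{}_{\alpha_0}}\to\bigoplus_{K'}\completion{\schw}{\norm{}_\alpha}$, contradicting Proposition \ref{topologically_irreducible}(3). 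Then $\pi_{\alpha_0}(\completion{\schw}{\norm{}_{\alpha_0}})$ injects into the Banach representation $B'/W_{K'}$; Theorem \ref{thm_strong_minimality} again forces this injection to be isometric up to a scalar, so its image is closed, and the preimage $W_K = W_{K'} + \pi_{\alpha_0}(\completion{\schw}{\norm{}_{\alpha_0}})$ is closed in $B'$. The injectivity of the direct sum map onto $W_K$ is then immediate from $\pi_{\alpha_0}(\completion{\schw}{\norm{}_{\alpha_0}})\cap W_{K'}=0$ and the inductive hypothesis, and the open mapping theorem gives the topological isomorphism.

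For (1), if $\norm{}_I\sim\norm{}_J$ then $\bigoplus_I\completion{\schw}{\norm{}_\alpha}$ and $\bigoplus_J\completion{\schw}{\norm{}_\alpha}$ are isomorphic as Banach representations of $\heis$, and applying $Hom_{\heis}(\completion{\schw}{\norm{}_\beta},-)$ for each $\beta$ and using Proposition \ref{topologically_irreducible}(3) forces $I=J$. For (3), any $\heis$-invariant norm $\norm{}$ dominated by both $\norm{}_{I_1}$ and $\norm{}_{I_2}$ would, by (2), be equivalent to $\norm{}_{J_1}\sim\norm{}_{J_2}$ with $J_i\subset I_i$ nonempty; part (1) would then force $J_1=J_2$, contradicting the disjointness of $I_1$ and $I_2$. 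The main obstacle throughout is the closedness of the sum $W$, which in a generic Banach space need not hold but here is enforced by the rigidity of Theorem \ref{thm_strong_minimality} combined with the vanishing of $Hom$ between distinct $\completion{\schw}{\norm{}_\alpha}$.
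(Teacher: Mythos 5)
Your proof is correct, and it is built from the same raw materials the paper uses (the independence Proposition \ref{prop_independence_minimal_norms} and the resulting identification $\completion{\schw}{\norm{}_I}\cong\bigoplus_{\alpha\in I}\completion{\schw}{\norm{}_\alpha}$; Proposition \ref{prop_small_representations} to pin down the kernel as a sub-direct-sum; the rigidity Theorem \ref{thm_strong_minimality} and the Hom computation of Proposition \ref{topologically_irreducible}(3)), but the way you assemble them is genuinely different in two respects. First, for part (2) the paper runs an outer induction on $|I|$: it identifies the kernel of the extended seminorm as $\bigoplus_{\alpha\in K}$, handles $K\neq\emptyset$ by the induction hypothesis, and in the residual case $K=\emptyset$ compares, component by component, the restriction seminorm $r_\alpha\norm{}_\alpha$ with the quotient seminorm $s_\alpha\norm{}_\alpha$, using the induction hypothesis once more to see that $\bigoplus_{\beta\neq\alpha}$ is closed for $\norm{}$ and hence $s_\alpha>0$. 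You instead go directly for surjectivity of $\bar\pi:\bigoplus_{\alpha\in J}\completion{\schw}{\norm{}_\alpha}\to B'$, and the only induction you need is the inner one on $|K|$ showing $W_K$ is closed; in the inductive step you use Theorem \ref{thm_strong_minimality} to make the injection into $B'/W_{K'}$ isometric up to scalar, whence closed image. This is somewhat cleaner, since it avoids the paper's two-layer induction and produces the Banach isomorphism $B'\cong\bigoplus_J$ in one pass, after which the equivalence $\norm{}\sim\norm{}_J$ drops out of the open mapping theorem. Second, for part (1) you use the Hom computation ($Hom_\heis(\completion{\schw}{\norm{}_\beta},\bigoplus_{\alpha\in I}\completion{\schw}{\norm{}_\alpha})$ detects membership $\beta\in I$), whereas the paper invokes independence (Propositions \ref{prop_independence_minimal_norms} and \ref{prop_independence_general}); both are short, and yours is arguably more transparent. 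The derivation of (3) from (1) and (2), including the observation that the $J_i$ must be nonempty because $\norm{}$ is a norm and not merely a seminorm, is the same in both.
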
 
Clearly, $(1)$ implies that the $J\subset I$ in $(2)$ is unique, and $(1)$ and $(2)$ imply $(3)$.
If $L_\alpha$ is the unit ball of $\norm{}_{\alpha}$, the meaning of $(3)$ is that if we put $L_I=\bigcap_{\alpha\in I}L_\alpha$, then $L_{I_1}+L_{I_2}=\schw$.

We will also show that any norm of the form $\norm{}_I$, where $I\subset\Grassmannian$ is finite, is equivalent to a norm which is locally maximal at some vector.

To prove these results, we introduce a  notion of independence of norms.
\subsection{Independence of norms}
The setting in this sub-section is general.
Let $V$ be a vector space over $\C_p$.
\begin{prop}\label{Prop_independence_two_norms}
Let $\norm{}_1,\norm{}_2$ be two norms on $V$.
The following are equivalent.
    \begin{enumerate}
    \item There exists no (non-zero) seminorm on $V$ which is dominated by both $\norm{}_1$ and $\norm{}_2$.
    \item The diagonal map
    \[V\map \completion{V}{\norm{}_1}\oplus\completion{V}{\norm{}_2}\]
    has a dense image, where the norm on the right hand side is $(v,w)\mapsto \max(\norm{v}_1,\norm{w}_2)$.
    \item Let $L_1,L_2$ be the closed unit balls of $\norm{}_1,\norm{}_2$ respectively.
    Then $L_1+L_2=V$.
    \end{enumerate}
\end{prop}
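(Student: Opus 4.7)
The plan is to prove the three conditions equivalent by establishing $(2) \Leftrightarrow (3)$ directly from the definitions, and then $(1) \Leftrightarrow (3)$ via an explicit ``infimal convolution'' construction.

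For $(2) \Leftrightarrow (3)$, I would unwind both conditions. Since $V$ is dense in each completion, density of the diagonal is equivalent to: for every $v \in V$ and every $\eps > 0$ there exists $u \in V$ with $\norm{u}_1 < \eps$ and $\norm{u - v}_2 < \eps$ (take a $\norm{}_1$-approximation of $0$ and a $\norm{}_2$-approximation of $v$, and pair them via the diagonal). On the other hand, $(3)$ is equivalent to the same statement, because $L_1 + L_2 = V$ implies $tL_1 + tL_2 = V$ for every non-zero $t \in \C_p$, and $|t|_p$ ranges densely through $\R_{>0}$. Writing $v = t\ell_1 + t\ell_2$ with $\norm{\ell_i}_i \leq 1$, the element $u := t\ell_1$ witnesses the approximation.

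For $(1) \Leftrightarrow (3)$, the key tool is the seminorm
\[
\norm{v}_\ast := \inf\braces*{\max(\norm{u}_1, \norm{v-u}_2)\ |\ u \in V}.
\]
Taking $u = 0$ and $u = v$ respectively shows $\norm{}_\ast \leq \norm{}_2$ and $\norm{}_\ast \leq \norm{}_1$; the ultrametric triangle inequality follows from splitting $u = u_1 + u_2$ and rearranging the double maximum. Thus $\norm{}_\ast$ is a seminorm dominated by both. Its vanishing set is precisely the set of $v$ for which the approximation property in the $(2) \Leftrightarrow (3)$ discussion holds, so $\norm{}_\ast \equiv 0$ iff $(3)$ holds. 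This gives $\neg(3) \Rightarrow \neg(1)$. Conversely, if $(3)$ holds and $\norm{}'$ is any seminorm with $\norm{}' \leq D\norm{}_i$ for $i=1,2$, then for $v \in V$ and arbitrarily small $\eps = |t|_p$ we may write $v = \ell_1 + \ell_2$ with $\norm{\ell_i}_i \leq \eps$, yielding $\norm{v}' \leq D\eps$; letting $\eps \to 0$ forces $\norm{}' \equiv 0$, giving $(3) \Rightarrow (1)$.

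No step seems to present a genuine obstacle: the main thing to verify carefully is that $\norm{}_\ast$ does satisfy the ultrametric inequality (the archimedean analogue would only give the usual triangle inequality), and that the infimum makes sense, which is immediate since the quantity is always bounded below by $0$. The scaling argument in $(3)\Rightarrow(1)$ crucially uses that $|\C_p^\times|_p$ is dense in $\R_{>0}$, which lets us pass from a uniform bound on $\norm{v}'$ to $\norm{v}' = 0$.
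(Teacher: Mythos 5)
Your proof is correct and follows essentially the same route as the paper: the infimal-convolution seminorm $\norm{}_\ast = \inf_u \max(\norm{u}_1,\norm{v-u}_2)$ you construct is exactly the gauge of $L_1+L_2$ that the paper uses for $(1)\Leftrightarrow(3)$, and your unwinding of the density condition for $(2)\Leftrightarrow(3)$ matches the paper's.
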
 
\begin{proof}
We will show that each of $(1)$ and $(2)$ is equivalent to $(3)$.
If 
To show that $(1)$ and $(3)$ are equivalent, note that the gauge of $L_1+L_2$ is either zero, if $L_1+L_2=V$, or defines a non-zero seminorm $\norm{}'$ on $V$.
The seminorm $\norm{}'$ is dominated by both $\norm{}_1$ and $\norm{}_2$, and any seminorm that is dominated by both $\norm{}_1$ and $\norm{}_2$ is also dominated by $\norm{}'$.
From this it follows that $(1)$ and $(3)$ are equivalent.

We now show that $(2)$ and $(3)$ are equivalent.
It is easy to see that $(2)$ is equivalent to the statement that for any $w\in V$ and $\eps>0$ there exists $v\in V$ such that $\norm{v-w}_1<\eps$ and $\norm{v}_2<\eps$.
This statement is equivalent to the claim that any $w\in V$ can be written as $w=v_1+v_2$ with $\norm{v_1}_1<\eps$ and $\norm{v_2}_2<\eps$, and this is equivalent to $(3)$.
\end{proof} 

\begin{defn}
We say that two norms $\norm{}_1,\norm{}_2$ on $V$ are \textit{independent} if one of the equivalent conditions of the previous proposition is satisfied.
\end{defn} 

\begin{defn}
We say that the norms $\norm{}_1,...,\norm{}_n$ on $V$ are independent if for any $1\leq i\leq n$ the two norms:
\[\norm{}_i \ \ \ \text{and}\ \ \   \max_{\substack{1\leq j\leq n \\ j\neq i}}\norm{}_j\]
are independent.
\end{defn} 

Note that if $\norm{}_1,...,\norm{}_n$ are independent, so is any subset of them.

\begin{prop}\label{prop_independence_general}
Let $\norm{}_1,...,\norm{}_n$ be norms on $V$.
The following are equivalent.
    \begin{enumerate}
    \item $\norm{}_1,...,\norm{}_n$ are independent.
    \item The diagonal embedding
    \[V\xmap{\ \triangle\ }\bigoplus_{i=1}^n\completion{V}{\norm{}_i}\]
    has a dense image.
    \item For any two disjoint sets $I,J\subset \braces{1,2,..,n}$ the norms 
    \[\max_{i\in I}\norm{}_i\ \ \ \text{and}\ \ \ \max_{j\in J}\norm{}_j\]
    are independent.
    \end{enumerate}
\end{prop}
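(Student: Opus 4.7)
The plan is to establish the three equivalences cyclically as $(3)\Rightarrow(1)\Rightarrow(2)\Rightarrow(3)$, leveraging the two-norm case (Proposition \ref{Prop_independence_two_norms}) as a black box and the non-archimedean triangle inequality. The implication $(3)\Rightarrow(1)$ is immediate: independence of the family is exactly $(3)$ applied to $I=\{i\}$ and $J=\{1,\ldots,n\}\setminus\{i\}$.

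For $(2)\Rightarrow(3)$, suppose $I,J\subset\{1,\ldots,n\}$ are disjoint. Using the characterization in Proposition \ref{Prop_independence_two_norms}(3) for the two norms $\max_{i\in I}\norm{}_i$ and $\max_{j\in J}\norm{}_j$, it suffices to show that every $w\in V$ and every $\eps>0$ admit a decomposition $w=a+b$ with $\max_{i\in I}\norm{a}_i<\eps$ and $\max_{j\in J}\norm{b}_j<\eps$. Apply $(2)$ to the tuple $(x_k)_{k=1}^n$ in $\bigoplus_k\completion{V}{\norm{}_k}$ defined by $x_i=0$ for $i\in I$, $x_j=w$ for $j\in J$, and arbitrarily (say $0$) for $k\notin I\cup J$. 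Density of the diagonal image furnishes $v\in V$ with $\max_k\norm{v-x_k}_k<\eps$; then $a:=v$ and $b:=w-v$ do the job.

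The substantive step is $(1)\Rightarrow(2)$. Given $(w_1,\ldots,w_n)$ in the direct sum, first replace each $w_i$ by a vector in $V$ approximating it in $\norm{}_i$ (possible since $V$ is dense in its completion), reducing to the case where each $w_i\in V$. For each $i$, the pairwise independence of $\norm{}_i$ and $M_i:=\max_{j\neq i}\norm{}_j$ lets us invoke Proposition \ref{Prop_independence_two_norms}(3), rescaled: write $w_i=c_i+d_i$ with $\norm{c_i}_i<\eps$ and $\norm{d_i}_j<\eps$ for all $j\neq i$. Now set $v:=d_1+\cdots+d_n\in V$. For each fixed $k$ one computes
\[
v-w_k \;=\; \sum_{i\neq k}d_i \;-\; c_k,
\]
and the ultrametric triangle inequality gives $\norm{v-w_k}_k\le\max\bigl(\max_{i\neq k}\norm{d_i}_k,\norm{c_k}_k\bigr)<\eps$. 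Thus $v$ simultaneously approximates all coordinates, proving density.

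I do not expect a serious obstacle; the entire argument hinges on a clever use of the $n$ decompositions $w_i=c_i+d_i$ produced by pairwise independence, together with the ultrametric absorbing property that sums of many vectors, all of which are small in a given norm, remain small in that norm. The mildly non-obvious point is the $(1)\Rightarrow(2)$ direction, where one has to resist the temptation to argue by induction on $n$ and instead construct the approximating $v$ in one shot from the $d_i$'s; this is where the non-archimedean nature of the norms is essential, since in the archimedean setting summing $n$ small terms would degrade the estimate by a factor of $n$.
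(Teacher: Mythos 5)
Your proof is correct, and your $(1)\Rightarrow(2)$ direction is a genuinely different and arguably cleaner argument than the paper's. The paper proves $(1)\Rightarrow(2)\Rightarrow(3)$ by induction on $n$: for $(1)\Rightarrow(2)$ it uses independence of $\norm{}_1$ and $\max_{1<i}\norm{}_i$ to get density into $\completion{V}{\norm{}_1}\oplus\completion{V}{\max_{1<i}\norm{}_i}$, then invokes the induction hypothesis to produce an isometric isomorphism $\completion{V}{\max_{1<i}\norm{}_i}\cong\bigoplus_{1<i\leq n}\completion{V}{\norm{}_i}$ and splices these together; $(2)\Rightarrow(3)$ is handled similarly by factoring the diagonal through $\completion{V}{\norm{}_I}\oplus\completion{V}{\norm{}_J}$ and applying the induction hypothesis to each factor. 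You instead construct the approximating vector in one shot: from the $n$ decompositions $w_i=c_i+d_i$ supplied by pairwise independence, the sum $v=\sum_i d_i$ simultaneously approximates all coordinates because the ultrametric inequality absorbs the $n-1$ cross-terms $d_i$ (for $i\neq k$) without degradation. This buys you a proof with no induction and no appeal to the intermediate isometric isomorphism of completions; it makes the non-archimedean mechanism explicit, exactly as you note. The paper's inductive argument is more structural and also establishes, en route, the useful fact that $\completion{V}{\max_{i\in I}\norm{}_i}\cong\bigoplus_{i\in I}\completion{V}{\norm{}_i}$ for independent families, which the paper reuses elsewhere; your direct proof does not produce this isomorphism as a byproduct, though it follows immediately from the equivalence once established.
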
 
    \begin{proof}
    As $(1)$ is a particular case of $(3)$, it remains to show $(1)\Rightarrow(2)\Rightarrow(3)$.
    We will prove this by induction on $n$.
    The case $n=2$ is essentially Proposition \ref{Prop_independence_two_norms}.
    
    Assume $(1)$.
    By the assumption and Proposition \ref{Prop_independence_two_norms}, the diagonal map
    \[V\map \completion{V}{\norm{}_1}\oplus\completion{V}{\max_{1<i\leq n}\norm{}_i}\]
    has a dense image.
    The norms $\norm{}_2,...,\norm{}_n$ are also independent and by the induction hypothesis the map
    \[\completion{V}{\max_{1<i\leq n}\norm{}_i}\map \bigoplus_{1<i\leq n}\completion{V}{\norm{}_i}\]
    is an isomorphism.
    Thus, $V\map \bigoplus_{1\leq i\leq n}\completion{V}{\norm{}_i}$ also has a dense image.
    
    Assume $(2)$ and let $I,J\subset \braces{1,...,n}$ be non-empty disjoint subsets.
    We may assume that $I\cup J=\braces{1,...,n}$.
    Denote $\norm{}_I=\displaystyle\max_{i\in I}\norm{}_i$ and $\norm{}_J=\displaystyle\max_{j\in J}\norm{}_j$.
    Consider the maps:
    \[V\xmap{\triangle} \completion{V}{\norm{}_I}\oplus\completion{V}{\norm{}_J}\map \bigoplus_{1\leq i\leq n}\completion{V}{\norm{}_i}.\]
    By the induction hypothesis, the second arrow is an isometric isomorphism. The first arrow $\Delta$ therefore has a dense image, so $(3)$ follows from Proposition \ref{Prop_independence_two_norms}.
    \end{proof} 

The following Proposition is left as an exercise to the reader.
\begin{prop}\label{prop_ind_invariant_norms}
Assume that $V$ is an irreducible representation of a group $G$, and $\norm{}_1,\norm{}_2\in\Norms(V)^G$.
Then $\norm{}_1$ and $\norm{}_2$ are independent if and only if there exists no $G$-invariant norm on $V$ that is dominated by both $\norm{}_1$ and $\norm{}_2$.
\end{prop}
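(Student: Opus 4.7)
The plan is to prove an equivalence. The forward direction is essentially immediate from the definitions: any $G$-invariant norm is, in particular, a (non-zero) seminorm, so the existence of a $G$-invariant norm dominated by both $\norm{}_1$ and $\norm{}_2$ would violate condition $(1)$ of Proposition \ref{Prop_independence_two_norms}. Hence independence of $\norm{}_1,\norm{}_2$ implies the non-existence of a $G$-invariant norm dominated by both.

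The content of the proposition lies in the reverse direction. Assume $\norm{}_1$ and $\norm{}_2$ are not independent; I will produce a $G$-invariant norm dominated by both. By Proposition \ref{Prop_independence_two_norms}, there exists a non-zero seminorm $\norm{}'$ on $V$ with $\norm{}'\preceq \norm{}_1$ and $\norm{}'\preceq \norm{}_2$, say $\norm{v}'\leq D_i\cdot\norm{v}_i$ for $i=1,2$. I would then \emph{symmetrize} $\norm{}'$ over $G$ by setting
\[
\norm{v}''=\sup_{g\in G}\norm{g(v)}'.
\]
The key observation is that this supremum is finite for every $v\in V$: since $\norm{}_1$ is $G$-invariant, $\norm{g(v)}'\leq D_1\cdot\norm{g(v)}_1=D_1\cdot\norm{v}_1<\infty$, uniformly in $g$. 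The same argument with $\norm{}_2$ gives the bound $\norm{v}''\leq D_2\cdot\norm{v}_2$, so $\norm{}''$ is a seminorm dominated by both $\norm{}_1$ and $\norm{}_2$. By construction it is $G$-invariant.

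It remains to upgrade $\norm{}''$ from a seminorm to a norm. The set $K=\{v\in V\ |\ \norm{v}''=0\}$ is a $G$-invariant subspace of $V$, since $\norm{}''$ is $G$-invariant. As $\norm{}''\geq \norm{}'$ and $\norm{}'$ is non-zero, $K$ is a proper subspace. The irreducibility of $V$ as a $G$-representation forces $K=0$, hence $\norm{}''$ is a $G$-invariant norm on $V$ dominated by both $\norm{}_1$ and $\norm{}_2$, contradicting the hypothesis. The only place where the argument could go wrong is ensuring finiteness of the supremum defining $\norm{}''$ and non-triviality of $\norm{}''$; both are handled by the invariance of $\norm{}_1,\norm{}_2$ and by irreducibility, so I do not anticipate any genuine obstacle.
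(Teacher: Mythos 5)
The paper leaves this proposition as an exercise, so there is no official proof to compare against; your argument is correct and is essentially the natural one. Symmetrizing the seminorm $\norm{}'$ by $\norm{v}''=\sup_{g\in G}\norm{g(v)}'$ works: $G$-invariance of $\norm{}_1$ gives $\norm{g(v)}'\leq D_1\norm{g(v)}_1=D_1\norm{v}_1$, so the supremum is finite and dominated by both norms; it is a $G$-invariant seminorm with $\norm{}''\geq\norm{}'$, hence non-zero; and its kernel is a $G$-invariant proper subspace, so irreducibility forces it to be $\{0\}$. One small stylistic remark: since you are proving the contrapositive (``not independent $\Rightarrow$ a $G$-invariant norm dominated by both exists''), the closing phrase ``contradicting the hypothesis'' is out of place --- you have simply constructed the desired norm, and the argument ends there, not with a contradiction. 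A slightly more streamlined route, more in line with the framework already set up in Proposition \ref{Prop_independence_two_norms}, is to take the gauge of $L_1+L_2$ directly: since $L_1$ and $L_2$ are $G$-stable $\mathcal{O}_{\C_p}$-submodules, $L_1+L_2$ is $G$-stable, so its gauge is automatically $G$-invariant and no symmetrization step is needed; if $L_1+L_2\neq V$ this gauge is a non-zero $G$-invariant seminorm, and the same irreducibility argument upgrades it to a norm. Both proofs are equally short; the $L_1+L_2$ version avoids introducing the auxiliary $\norm{}'$ but is otherwise the same idea.
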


\subsection{Proofs of the claims in this section}

\begin{prop}\label{prop_independence_minimal_norms}
Let $I\subset \Grassmannian$ be a finite subset.
The norms $\braces{\norm{}_\alpha\ |\ \alpha\in I}$ are independent.
\end{prop}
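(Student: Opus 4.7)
The plan is to argue by induction on $n = |I|$, combining the rigidity of the norms $\norm{}_\alpha$ (Theorem \ref{thm_strong_minimality}) with the Schur-type assertion of Proposition \ref{topologically_irreducible}(3). The case $n \leq 1$ is vacuous. For the induction step, I would fix $\alpha \in I$ and set $\norm{}' := \max_{\beta \in I \setminus \{\alpha\}} \norm{}_\beta$. Since $\schw$ is irreducible as an $\heis$-representation, Proposition \ref{prop_ind_invariant_norms} reduces the desired independence of $\norm{}_\alpha$ and $\norm{}'$ to showing that no $\heis$-invariant norm on $\schw$ is dominated by both.

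Suppose for contradiction some $\heis$-invariant $\norm{}''$ is dominated by both. Since $\norm{}'' \dominated \norm{}_\alpha$, Theorem \ref{thm_strong_minimality} forces $\norm{}'' = r \cdot \norm{}_\alpha$ for some $r > 0$, and then $\norm{}'' \dominated \norm{}'$ yields $\norm{}_\alpha \dominated \norm{}'$. Hence the identity on $\schw$ is continuous from $(\schw, \norm{}')$ to $(\schw, \norm{}_\alpha)$ and, by the universal property of completions, extends to a continuous $\heis$-equivariant map
\[F \colon \completion{\schw}{\norm{}'} \lmap \completion{\schw}{\norm{}_\alpha}\]
whose restriction to $\schw$ is the identity.

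By the induction hypothesis the norms $\{\norm{}_\beta : \beta \in I \setminus \{\alpha\}\}$ are independent, so Proposition \ref{prop_independence_general}(2) produces an isometric isomorphism
\[\completion{\schw}{\norm{}'} \cong \bigoplus_{\beta \in I \setminus \{\alpha\}} \completion{\schw}{\norm{}_\beta}\]
induced by the diagonal map (the diagonal is isometric by construction of $\norm{}'$ and has dense image). Under this identification, $F$ is determined by its components $F_\beta \colon \completion{\schw}{\norm{}_\beta} \to \completion{\schw}{\norm{}_\alpha}$, and each $F_\beta$ must vanish by Proposition \ref{topologically_irreducible}(3) since $\beta \neq \alpha$. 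Thus $F = 0$, contradicting the fact that $F$ is the identity on the diagonal copy of $\schw \subset \completion{\schw}{\norm{}'}$.

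The only delicate point I anticipate is making explicit that the decomposition of $F$ via its components really uses nothing beyond continuity and the direct-sum structure, and that each component lands in the Hom-space which Proposition \ref{topologically_irreducible}(3) kills; once those are in place, the inductive step closes cleanly. In other words, the genuine content is already packaged into the three previously established results — rigidity, the Schur-type Hom calculation, and the equivalence of independence with density of the diagonal — so the proof is mostly a matter of assembling them in the right order.
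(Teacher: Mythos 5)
Your proof is correct and follows essentially the same inductive strategy as the paper's: reduce via Proposition \ref{prop_ind_invariant_norms} and Theorem \ref{thm_strong_minimality} to showing $\norm{}_{I\setminus\{\alpha\}}$ does not dominate $\norm{}_\alpha$, identify $\completion{\schw}{\norm{}_{I\setminus\{\alpha\}}}$ with the direct sum of completions via the induction hypothesis, and kill each component map with the Hom-space computation of Proposition \ref{topologically_irreducible}. The only difference is expository: you unpack the reduction step and the vanishing of the component maps in slightly more detail, but the underlying argument is identical.
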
 
    \begin{proof}
    The proof is by induction on the size of the set $I$.
    If $|I|=1$ there is nothing to prove.
    Assume that $|I|=n>1$.
    Let $\alpha\in I$, we need to show that the two norms
    \[\norm{}_\alpha\ \ \ \text{and}\ \ \ \norm{}_{I\backslash\braces{\alpha}}:=\max_{\beta\in I\backslash\braces{\alpha}}\norm{}_\beta\]
    are independent.
    By Theorem \ref{thm_strong_minimality} and Proposition \ref{prop_ind_invariant_norms} it is enough to prove that $\norm{}_{I\backslash\braces{\alpha}}$ does not dominate $\norm{}_\alpha$.
    Suppose, for a contradiction, that $\norm{}_\alpha \dominated \norm{}_{I\backslash\braces{\alpha}}$.
    By the induction hypothesis, there is an isometry
    \[\completion{\schw}{I\backslash\braces{\alpha}}\xmap{\sim} \bigoplus_{\beta\in I\backslash\braces{\alpha}}\completion{\schw}{\norm{}_\beta}.\]
    Thus, we obtain a non-zero map 
    \[\bigoplus_{\beta\in I\backslash\braces{\alpha}}\completion{\schw}{\norm{}_\beta}\map \completion{\schw}{\norm{}_\alpha}.\]
    Then there exists $\beta\in I\backslash\braces{\alpha}$ such that the reduced map $\completion{\schw}{\norm{}_\beta}\map \completion{\schw}{\norm{}_\alpha}$ is non-zero.
    By Proposition \ref{topologically_irreducible} we have $\alpha=\beta$, a contradiction.    
    \end{proof} 

\begin{cor}
Let $I\subset \Grassmannian$ be a finite subset.
The norm $\norm{}_I$ is equivalent to a locally maximal norm (with respect to some vector).
\end{cor}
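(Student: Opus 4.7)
The plan is to produce a strongly cyclic vector in the completion $\completion{\schw}{\norm{}_I}$ and then invoke Lemma \ref{lem_locally_maximal_from_strongly_cyclic} to produce the desired equivalent locally maximal norm. All three ingredients needed -- independence, strong irreducibility of the $\completion{\schw}{\norm{}_\alpha}$, and the fact that they are pairwise non-isomorphic -- have just been put in place, so the argument is really a matter of assembling them in the right order.

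First, by the previous proposition the family $\{\norm{}_\alpha\}_{\alpha\in I}$ is independent. Part (2) of Proposition \ref{prop_independence_general} then says that the diagonal map
\[
\schw \xmap{\ \triangle\ } \bigoplus_{\alpha\in I}\completion{\schw}{\norm{}_\alpha}
\]
has dense image, where the target carries the max norm. This map is an isometry for $\norm{}_I=\max_{\alpha\in I}\norm{}_\alpha$, so taking completions identifies $\completion{\schw}{\norm{}_I}$ isometrically with $B:=\bigoplus_{\alpha\in I}\completion{\schw}{\norm{}_\alpha}$ as a Banach representation of $\heis$.

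Next, by Proposition \ref{topologically_irreducible}(1) each summand $\completion{\schw}{\norm{}_\alpha}$ is strongly irreducible, and by part (3) of the same proposition the summands are pairwise non-isomorphic (any nonzero $\heis$-equivariant map between them would force $\alpha=\beta$). Pick any nonzero $v\in\schw$; its image under the diagonal embedding has nonzero component in every summand, since each $\schw\injection\completion{\schw}{\norm{}_\alpha}$ is injective. Proposition \ref{prop_small_representations} then applies and shows that $v$, viewed inside $B\cong\completion{\schw}{\norm{}_I}$, is a strongly cyclic vector.

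Finally, Lemma \ref{lem_locally_maximal_from_strongly_cyclic} applied to $(B,\norm{}_I,v)$ produces a norm $\norm{}_{v,B}$ on $\completion{\schw}{\norm{}_I}$ that is normalized and locally maximal at $v$ and equivalent to $\norm{}_I$. Restricting to $\schw$ gives the corollary. There is no real obstacle here; the only point demanding minor care is the identification of $\completion{\schw}{\norm{}_I}$ with the Banach direct sum in the first step (so that Proposition \ref{prop_small_representations}, which is stated for direct sums, can actually be applied), but this identification is immediate once independence is invoked.
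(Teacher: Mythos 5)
Your proof is correct and takes essentially the same approach as the paper: both identify $\completion{\schw}{\norm{}_I}$ isometrically with the Banach direct sum $\bigoplus_{\alpha\in I}\completion{\schw}{\norm{}_\alpha}$ via independence, invoke pairwise non-isomorphism and strong irreducibility from Proposition \ref{topologically_irreducible}, and apply Proposition \ref{prop_small_representations} to produce a strongly cyclic vector. The only cosmetic difference is that you close with Lemma \ref{lem_locally_maximal_from_strongly_cyclic} directly rather than Theorem \ref{thm_strongly_cyclic_spaces}, but that theorem subsumes the lemma, so the chain of reasoning is the same.
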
 
    \begin{proof}
    Since the norms $\braces{\norm{}_\alpha\ |\ \alpha\in I}$ are independent, it follows by Proposition \ref{prop_independence_general} that
    \[\completion{\schw}{\norm{}}\simeq \bigoplus_{\alpha\in I}\completion{\schw}{\norm{}_\alpha}\]
    are isomorphic Banach representations (and even isometrically isomorphic).
    By Proposition \ref{topologically_irreducible}, the spaces $\braces{\completion{\schw}{\norm{}_\alpha}\ |\ \alpha\in I}$ are pairwise non-isomorphic.
    By Proposition \ref{prop_small_representations}, $\bigoplus_{\alpha\in I}\completion{\schw}{\norm{}_\alpha}$ has a strongly cyclic vector, and by Theorem \ref{thm_strongly_cyclic_spaces}, $\norm{}$ is equivalent to a locally maximal norm.
    \end{proof} 

\begin{proof}[Proof of Theorem \ref{thm_classification_small_norms}]
    As already noted, $(3)$ follows from $(1)$ and $(2)$.
    $(1)$ follows from the fact that the $\norm{}_\alpha$ are independent (Proposition \ref{prop_independence_minimal_norms}) and by Proposition \ref{prop_independence_general}.
    We prove $(2)$ by induction on the size of $I$.
    When $|I|=1$ the claim follows from Theorem \ref{thm_strong_minimality}.
    Assume that $|I|=n>1$, and that the claim is true for all subsets of $\Grassmannian$ of size $<n$.
    Let $\norm{}$ be an $\heis$-invariant norm on $\schw$ that is dominated by $\norm{}_I$.
    Then $\norm{}$ extends to an $\heis$-invariant seminorm on the completion $\completion{\schw}{\norm{}_I}$, which, by the independence of the $\norm{}_\alpha$, is isometrically isomorphic to $\bigoplus_{\alpha\in I}\completion{\schw}{\norm{}_\alpha}$ via the diagonal embedding.
    By Proposition \ref{prop_small_representations}, the kernel of $\norm{}$ is of the form $\bigoplus_{\alpha\in K}\completion{\schw}{\norm{}_{\alpha}}$ for some subset $K\subset I$.
    Using the diagonal embedding, this means that $\norm{}$ is already dominated by $\norm{}_{I\backslash K}$.
    If $K$ is non-empty, then $|I\backslash K|<|I|$ and the claim is true by the induction hypothesis.
    Assume that $K$ is empty.
    Then $\norm{}$ is a norm on $\bigoplus_{\alpha\in I}\completion{\schw}{\norm{}_{\alpha}}$.
    We want to show that in this case $\norm{}$ is equivalent to $\norm{}_I$.
    Choose $\alpha\in I$ and denote $J=I\backslash\braces{\alpha}$.
    By Theorem \ref{thm_strong_minimality}, the restriction of $\norm{}$ to the component $\completion{\schw}{\norm{}_\alpha}$ is of the form $r_\alpha\cdot \norm{}_\alpha$.
    Similarly, the seminorm on $\completion{\schw}{\norm{}_\alpha}$, obtained from $\norm{}$ by taking the quotient of $\bigoplus_{\alpha\in I}\completion{\schw}{\norm{}_\alpha}$ by $\bigoplus_{\beta\in J}\completion{\schw}{\norm{}_\beta}$ is of the form $s_\alpha\cdot \norm{}_\alpha$.
    Clearly, $s_\alpha \leq r_\alpha$.
    We claim that $0<s_\alpha$.
    By the induction hypothesis, the restriction of $\norm{}$ to the component $\bigoplus_{\beta\in J}\completion{\schw}{\norm{}_\beta}$ is equivalent to $\norm{}_J$.
    It follows that $\bigoplus_{\beta\in J}\completion{\schw}{\norm{}_\beta}$ is a closed subspace of $\bigoplus_{\alpha\in I}\completion{\schw}{\norm{}_\alpha}$ with respect to the topology induced by $\norm{}$.
    Therefore, $s_\alpha\cdot \norm{}_\alpha$ is a norm, so $s_\alpha>0$.
    This is true for any $\alpha\in I$, so
    \[\max_{\alpha\in I}(s_\alpha\cdot\norm{}_\alpha)\leq\norm{}'\leq \max_{\alpha\in I}(r_\alpha\cdot\norm{}_\alpha),\] 
    which shows that $\norm{}$ is equivalent to $\norm{}_I$.
   \end{proof}

\subsection{Open questions}
We conclude with some open questions that we find interesting.
\begin{question}
Does there exists an $\heis$-invariant norm on $\schw$ which does not dominate any of the norms $\norm{}_\alpha$, for $\alpha\in\Grassmannian$?
\end{question}
We find this question especially interesting, regardless of the answer.
If the answer is negative, the spaces $\braces{\completion{\schw}{\norm{}_\alpha}\ |\ \alpha\in \Grassmannian}$ form a complete list of the irreducible completions of $\schw$.
If the answer is positive, constructing such norms will require new ideas that could be useful in the study of Banach representations of $p$-adic groups.
In the latter case, we also ask
\begin{question}
Does there exist another $\heis$-invariant norm on $\schw$, the completion by which is an (strongly) irreducible Banach representations?
\end{question}

The last section gives a complete picture of those norms which are dominated by some $\norm{}_I$, for a finite subset $I\subset \Grassmannian$.
When $I$ is not finite, we can still define the norm $\norm{}_I$ as before.
Now it seems reasonable to consider the topology of $\Grassmannian$.

\begin{question}
Let $I_1,I_2$ be closed and disjoint subsets of $\Grassmannian$.
    \begin{enumerate}
    \item Are the norms $\norm{}_{I_1}$ and $\norm{}_{I_2}$ independent?
    \item Is there a simple description of the completion $\completion{\schw}{\norm{}_{I_1}}$ in terms of the completions $\completion{\schw}{\norm{}_\alpha}$ for $\alpha\in I_1$?
    \end{enumerate}
\end{question}
Finally, taking $I=\Grassmannian$, we ask
\begin{question}
Does the norm $\sup_{\alpha\in\Grassmannian}\norm{}_\alpha$ belong to the maximal equivalence class of $\heis$-invariant norms on $\schw$?
\end{question}

\bibliographystyle{abbrv}

\end{document}